\title{complete}
\title[complete intersection CY in Grassmannians]{Complete intersection Calabi--Yau manifolds\\
  with respect to homogeneous vector bundles\\ on 
Grassmannians}
\author[D.~Inoue]{Daisuke Inoue}
\address{
Graduate School of Mathematical Sciences,
The University of Tokyo,
3-8-1 Komaba,
Meguro-ku,
Tokyo,
153-8914,
Japan.}
\email{ino@ms.u-tokyo.ac.jp}
\author[A.~Ito]{Atsushi Ito}
\address{
Department of Mathematics,
Graduate School of Science,
Kyoto University,
Kyoto 606-8502,
Japan.
}
\email{aito@math.kyoto-u.ac.jp}
\author[M.~Miura]{Makoto Miura}
\address{
Korea Institute for Advanced Study,
85 Hoegiro,
Dongdaemun-gu,
Seoul,
130-722,
Republic of Korea.
}
\email{miura@kias.re.kr}
\date{}
\crefname{thm}{Theorem}{Theorems}
\crefname{prop}{Proposition}{Propositions}
\crefname{lem}{Lemma}{Lemmas}
\crefname{subsubsection}{}{}
\crefname{equation}{}{}
\crefname{enumi}{}{}
\theoremstyle{plain}
\newtheorem{thm}{Theorem}[section]
\newtheorem{cor}[thm]{Corollary}
\newtheorem{lem}[thm]{Lemma}
\newtheorem{prop}[thm]{Proposition}
\newtheorem{claim}[thm]{Claim}
\theoremstyle{definition}
\newtheorem{rmk}[thm]{Remark}
\newtheorem{example}[thm]{Example}
\theoremstyle{definition}
\newtheorem*{acknowledgements}{Acknowledgements}
\newlist{assumptions}{enumerate}{10}
\setlist[assumptions]{label*=(A\arabic*)}
\crefname{assumptionsi}{}{}
\newlist{Qconditions}{enumerate}{10}
\setlist[Qconditions]{label*=(Q\arabic*)}
\crefname{Qconditionsi}{}{}
\newlist{propenum}{enumerate}{10}
\setlist[propenum]{label*=(\roman*)}
\crefname{propenumi}{}{}
\newcounter{exno}
\newenvironment{cy}
{\begin{tabular}{>{\refstepcounter{exno}}llllrrrrc}}
{\end{tabular}}
\crefname{exno}{No.\!}{No.\!}
\begin{document}
%[ abstract
\begin{abstract}
Based on the method by \cite{kuchle},
we give a procedure
to list up all complete intersection Calabi--Yau manifolds
with respect to direct sums of irreducible homogeneous vector bundles 
on Grassmannians for each dimension.
In particular,
we give a classification of such Calabi--Yau $3$-folds
and determine their topological invariants.
We also give alternative descriptions for some of them.
\end{abstract}
%\tableofcontents
\maketitle

%]
%[ introduction

\thispagestyle{empty}
\section{Introduction}
In this paper, a smooth projective manifold
is called \textit{Calabi--Yau} if its first Chern class is trivial.
Calabi--Yau manifolds receive significant interest from 
mathematicians and physicists, not only for the 
classification of algebraic varieties 
but also for the relation with the string theory.
In dimensions greater than two,
there are many different deformation
classes of Calabi--Yau manifolds.
Many constructions of Calabi--Yau manifolds are known, for instance,
complete intersections of hypersurfaces in toric Fano varieties
(see e.g.\ \cite{CK} and references therein).
However, it is still an open problem whether or not the number of
deformation classes of Calabi--Yau 3-folds 
is finite.\\

Let $\scF$ be a globally generated 
homogeneous vector bundle on the Grassmannian $G(k,n)$
of $k$-dimensional subspaces in $\C^n$.
We denote by $Z_\scF\subset G(k,n)$ the zero locus 
of a general global section of $\scF$. 
From the Bertini-type theorem by \cite{mukai},  
$Z_\scF$ is a disjoint union of smooth submanifolds of $G(k,n)$
with $\codim Z_\scF=\rank \scF$ if it is not the empty set.
If that is the case,
we call $Z_\scF$ a \textit{complete intersection} 
with respect to $\scF$. 

K\"uchle has classified complete intersection Fano 4-folds
with respect to direct sums of irreducible homogeneous vector bundles on Grassmannians
%with $c_1\ge1$ 
\cite{kuchle}.
%Here we write the first Chern class $c_1$ simply by the coefficient of the hyperplane class.
In this paper, we 
slightly generalize his result and obtain the %complete 
list of all complete intersection Calabi--Yau $3$-folds with respect to
direct sums of irreducible homogeneous vector bundles on Grassmannians.

\begin{thm}
\label{mainthm_en}
Let $1< k < n-1$ be integers. 
A complete intersection Calabi--Yau $3$-fold $Z_\scF$ in $G(k,n)$
with respect to a direct sum of globally generated irreducible 
homogeneous vector bundles $\scF$
is one of the $3$-folds listed in \cref{tb:cy3} up
to natural identifications among the Grassmannians
explained in \cref{ss:natural}.
All except for \cref{row30} are irreducible, and
all except for \cref{row26} are Calabi--Yau $3$-folds in the 
strict sense, i.e.\ $h^1(\scO_{Z_{\scF}})=h^2(\scO_{Z_{\scF}})=0$.
\end{thm}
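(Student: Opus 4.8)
The plan is to turn the classification into a finite combinatorial search and then to settle each surviving candidate by a direct computation with Bott's theorem. I would start from adjunction. Writing $H$ for the Pl\"ucker polarization, so $K_{G(k,n)}\cong\scO_{G(k,n)}(-nH)$, a smooth complete intersection $Z_\scF$ of the expected codimension $\rank\scF$ satisfies $K_{Z_\scF}\cong\bigl(\scO_{G(k,n)}(-nH)\otimes\det\scF\bigr)|_{Z_\scF}$; since $H|_{Z_\scF}$ is nontrivial, $c_1(Z_\scF)$ vanishes if and only if $\det\scF\cong\scO_{G(k,n)}(nH)$, while $\dim Z_\scF=3$ forces $\rank\scF=k(n-k)-3$. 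One may also assume $\scF$ has no trivial quotient, since otherwise --- $\scF$ being globally generated --- it splits off a trivial summand and a general section is nowhere vanishing. Finally, every homogeneous bundle has a filtration by homogeneous subbundles whose quotients are irreducible, of the form $\Sigma^\alpha\mathcal U^\vee\otimes\Sigma^\beta\mathcal Q$ with $\alpha\in\mathbb Z^k$, $\beta\in\mathbb Z^{n-k}$ weakly decreasing; by Bott's theorem such a bundle is globally generated exactly when $\alpha_k,\beta_{n-k}\ge 0$, its rank and $c_1$ are given by the usual Schur-functor formulas, and the composition factors of an admissible $\scF$ satisfy $\sum c_1=nH$ and $\sum\rank=k(n-k)-3$.

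The crucial step is to cut down to finitely many Grassmannians. Using the identifications of \cref{ss:natural} --- in particular that zeroing a general section of $\mathcal U^\vee$, respectively $\mathcal Q$, replaces $G(k,n)$ by $G(k,n-1)$, respectively $G(k-1,n-1)$, together with $G(k,n)\cong G(n-k,n)$ --- I would strip every composition factor isomorphic to $\mathcal U^\vee$ or $\mathcal Q$ and assume $k\le n-k$. The point is then a rank count: $\rank(\Sigma^\alpha\mathcal U^\vee\otimes\Sigma^\beta\mathcal Q)$ grows like a polynomial of degree $|\beta|$ in $n-k$ (and symmetrically of degree $|\alpha|$ in $k$), whereas $\rank\scF=k(n-k)-3$ is only linear in each variable; after the stripping, the only factors rank-efficient enough to occur once $n-k$ is large are line bundles, fixed Schur powers of $\mathcal U^\vee$, and the ``penultimate'' exterior powers $\Lambda^{k-1}\mathcal U^\vee$ and $\Lambda^{(n-k)-1}\mathcal Q$, and fitting these against the two budgets $\sum\rank=k(n-k)-3$ and $\sum c_1=nH$ bounds both $k$ and $n-k$. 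Carrying out this accounting rigorously is the technical heart of the proof and, I expect, where almost all of the effort lies. What remains is finitely many pairs $(k,n)$ and, for each, finitely many admissible multisets of weights; since these numerical constraints are insensitive to whether $\scF$ splits, the non-semisimple possibilities are then a finite extra check, handled via $\operatorname{Ext}^1$ and Bott's theorem.

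For each surviving $\scF$ I would first decide nonemptiness and dimension by Schubert calculus: $Z_\scF$ is nonempty --- and then, by Mukai's Bertini theorem \cite{mukai}, a smooth disjoint union of threefolds --- exactly when $\int_{G(k,n)}c_{\rank\scF}(\scF)\cdot H^3\ne 0$, and I would discard the cases where this vanishes. For the rest I would resolve $\scO_{Z_\scF}$ by the Koszul complex $0\to\Lambda^{\rank\scF}\scF^\vee\to\cdots\to\scF^\vee\to\scO_{G(k,n)}\to\scO_{Z_\scF}\to 0$; since each $\Lambda^j\scF^\vee$ is again a direct sum of homogeneous bundles, Bott's theorem computes $h^i(\scO_{Z_\scF})$, which should give $h^0=1$ --- so $Z_\scF$ irreducible --- and $h^1=h^2=0$, with the two exceptions \cref{row30} (where $h^0>1$) and \cref{row26} (where $h^1$ or $h^2$ is nonzero); combined with Schubert calculus on $G(k,n)$ this also yields the remaining entries of \cref{tb:cy3}, such as the degree, $c_2\cdot H$ and $c_3$. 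Finally I would remove the repetitions in the list produced by \cref{ss:natural} and try to recognize the threefolds that survive in alternative descriptions.
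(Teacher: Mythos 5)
Your proposal follows essentially the same route as the paper: adjunction gives $\rank\scF=k(n-k)-3$ and $c_1(\scF)=n$; the irreducible components reduce to the three types $S_\lambda\scS^*$, $S_\mu\scQ$, $\scO(p)$ after stripping $\scS^*$ and $\scQ$ and invoking $G(k,n)\cong G(n-k,n)$; finiteness is obtained by playing the rank budget against the $c_1$ budget (the paper packages exactly this linear combination as the additive invariant $\kappa_\scE=k\,c_1(\scE)-\rank\scE$ with $\kappa_\scF=k^2+3$); and irreducibility and $h^1=h^2=0$ come from the Koszul resolution plus Bott's theorem, as in K\"uchle. The only caveats are that your list of rank-efficient survivors omits $\scQ(1)$ (rank $n-k$, which does occur, e.g.\ in \cref{row10}) and the $n=2k$ ambiguity between $S_\lambda\scS^*$ and $S_\lambda\scQ$, and that the case-by-case enumeration producing \cref{tb:cy3} --- the actual substance of the theorem, occupying the paper's Sections 2--3 --- is deferred to an unexecuted ``finite search.''
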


In \cref{tb:cy3}, the left-most column indicates the numbering.
The second column indicates the identifier of the corresponding Fano 4-folds of index one in \cite{kuchle}.
When such a Fano 4-fold $Z_{\scF'}$ exists, the vector bundle $\scF$ relates with $\scF'$ by
$\scF = \scF' \oplus \scO(1)$.
The third and fourth columns express the ambient Grassmannians and the vector bundles, respectively.
We use the symbols
$\scS$ and $\scQ$ for the universal subbundle and the universal 
quotient bundle on a Grassmannian $G(k,n)$, respectively.
Hence there exists an exact sequence on $G(k,n)$
\begin{align}
\label{eq:sq}
0\rightarrow \scS \rightarrow \scO^{\oplus n} \rightarrow \scQ \rightarrow 0.
\end{align}
The next four columns give the topological invariants of the Calabi--Yau 3-folds $Z_\scF$.
The right-most column indicates
alternative descriptions of the Calabi--Yau 3-folds, which we will explain below.

%
%Let 
%$\scS$ and $\scQ$ be the universal subbundle and the universal 
%quotient bundle on a Grassmannian $G(k,n)$, respectively.
%Hence there exists an exact sequence on $G(k,n)$
%\begin{align}
%\label{eq:sq}
%0\rightarrow \scS \rightarrow \scO^{\oplus n} \rightarrow \scQ \rightarrow 0.
%\end{align}
%

{\setlength{\textwidth}{200mm}
%[ table of complete intersection Calabi--Yau $3$-folds
\newcommand*{\tabbox}[2][t]{%
    \vspace{0pt}\parbox[#1][3.7\baselineskip]{1cm}{\strut#2\strut}}
\def\arraystretch{1.1}
\small
\begin{table}[H]
\centering
\begin{cy}
\toprule
No.&\hspace{-7pt} \tiny{\cite{kuchle}} & $G(k,n)$ & $\scF$ & $H^3$ & $c_2.H$ & $c_3$ &
$h^{1,1}$  & Description \\ 
\midrule
\label{row1}\theexno & & $G(2,4)$ & $\scO(4)$ & $8$ & $56$ & $-176$ & $1$
& $\left(\P^5\right)_{2,4}$ \\
  \label{row2}\theexno &   (b2) & $G(2,5)$ & $\mathcal{O}(1) \oplus   \mathcal{O}(2)^{\oplus 2}$ & $20$ & $68$ & $-120$ & $1$ & $G(2,5)_{1,2^2}$\\
  \label{row3}\theexno & (b1) &&  $\mathcal{O}(1)^{\oplus 2} \oplus \mathcal{O}(3)$ & $15$ & $66$ &   $-150$ & $1$ & $G(2,5)_{1^2,3}$ \\
	\label{row4}\theexno &&& $\scS^{*}(1) \oplus \mathcal{O}(2)$ &   $24$ & $72$ & $-116$ & $1$ & $OG(5,10)_{1^6, 2}$  \\
  \label{row5}\theexno &&& $\wedge^2 \scQ(1)$ &   $25$ & $70$ & $-100$ & $1$ & $G(2,5) \cap G(2,5) \leadsto$  \\
  \label{row6}\theexno & (b6) & $G(2,6)$ & $\mathcal{O}(1)^{\oplus 4}   \oplus \mathcal{O}(2)$ & $28$ & $76$ & $-116$ & $1$ &   $G(2,6)_{1^4,2}$ \\
	\label{row7}\theexno & (b5) && $\scS^{*}(1) \oplus \mathcal{O}(1)^{\oplus 3}$ &   $33$ & $78$ & $-102$ & $1$ & $\Sigma_{1^9} $  for a Schubert variety $\Sigma$ \\
  \label{row8}\theexno & (b4) && $\Sym^2\scS^{*}   \oplus \mathcal{O}(1) \oplus \mathcal{O}(2)$  & $40$ & $88$ & $-128$ & $2$ & $(\P^3\times \P^3)_{1^2,2}$    \\
	\label{row9}\theexno && &  $\Sym^2\scS^{*} \oplus \scS^{*}(1)$ & $48$ & $84$ & $-92$ & $2$ & \cref{sec_description4}\\
	\label{row10}\theexno & (b3) && $\scQ(1) \oplus \mathcal{O}(1)$ &   $42$ & $84$ & $-98$ & $1$ & $G(2,7)_{1^7}$\\
  \label{row11}\theexno &&&   $\wedge^3\scQ \oplus \mathcal{O}(3)$ & $18$ & $72$ & $-162$ & $2$ &  $(\P^2\times \P^2)_{3}$\\
  \label{row12}\theexno &   (b7) & $G(2,7)$ &   $\mathcal{O}(1)^{\oplus 7}$ & $42$ & $84$ & $-98$ & $1$ & $G(2,7)_{1^7}$ \\
  \label{row13}\theexno & (b8) &&   $\Sym^2\scS^{*} \oplus \mathcal{O}(1)^{\oplus 4}$ & $56$ & $92$ & $-92$  & $1$ & $OG(2,7)_{1^4}$ \\
  \label{row14}\theexno & (b9) &&   $(\Sym^2\scS^{*})^{\oplus 2} \oplus \mathcal{O}(1)$ & $80$ & $80$ &  $-32$ & $8$ & \cite{cas} \\
  \label{row15}\theexno & (b10) &&   $\wedge^4\scQ \oplus \mathcal{O}(1) \oplus \mathcal{O}(2)$ & $36$ &  $84$ & $-120$ & $1$ & $\left(G_2/P_1\right)_{1,2}$, \cref{sec_description4}\\
	\label{row16}\theexno &&& $\scS^{*}(1) \oplus \wedge^4\scQ$ & $42$ & $84$ & $-98$ & $1$ & 
	%\shortstack{a specialization of $G(2,7)_{1^7}$\\ in 
	$G(2,7)_{1^7}  \leadsto$, \cref{sec_description4}\\
  \label{row17}\theexno & (b11) & $G(2,8)$ &   $\wedge^5\scQ \oplus \mathcal{O}(1)^{\oplus 3}$ & $57$ & $90$ & $-84$ &  $1$ & \cref{sec_description4} \\
  \label{row18}\theexno &&&   $\Sym^2\scS^{*} \oplus \wedge^5\scQ$ & $72$ & $96$ & $-72$ & $1$ &   \cref{sec_description4} \\
  \label{row19}\theexno &   (c1) & $G(3,6)$ &   $\mathcal{O}(1)^{\oplus 6}$ & $42$ & $84$ & $-96$ & $1$ & $G(3,6)_{1^6}$ \\
  \label{row20}\theexno & (c2) &&   $\wedge^2 \scS^{*} \oplus \mathcal{O}(1)^{\oplus 2} \oplus \mathcal{O}(2)$  & $32$ & $80$ & $-116$ & $1$ & $LG(3,6)_{1^2,2}$ \\
  \label{row21}\theexno &&& $\scS^{*}(1) \oplus \wedge^2 \scS^{*}$ &   $42$ & $84$ & $-96$ & $1$ & $G(3,6)_{1^6}  \leadsto$ \\
  \label{row22}\theexno & (c4) & $G(3,7)$ &   $\Sym^2 \scS^{*} \oplus \mathcal{O}(1)^{\oplus 3}$ & $128$ & $128$ &  $-128$ & $1$ & $\left(\P^7\right)_{2^4}$ \\
  \label{row23}\theexno & (c6) &&   $(\wedge^2 \scS^{*})^{\oplus 2} \oplus \mathcal{O}(1)^{\oplus 3}$ &  $61$ & $94$ & $-86$ & $1$ & \\
  \label{row24}\theexno & (c3) &&   $(\wedge^3 \scQ)^{\oplus 2} \oplus \mathcal{O}(1)$ & $72$ & $96$ &  $-74$ & $1$ & \\
  \label{row25}\theexno & (c5) &&   $\wedge^2 \scS^{*} \oplus \wedge^3 \scQ \oplus\mathcal{O}(1)^{\oplus 2}$   & $66$ & $96$ & $-84$ & $1$ & \cite{Ku2} \\
  \label{row26}\theexno && $G(3,8)$ &   $(\Sym^2 \scS^{*})^{\oplus 2}$ & $384$ & $0$ & $0$ & $9$ & Jacobian of a curve of genus $3$\\
  \label{row27}\theexno &&&   $\Sym^2 \scS^{*} \oplus (\wedge^2 \scS^{*})^{\oplus 2}$ &  $176$ & $128$ & $-64$ & $2$ & \\
  \label{row28}\theexno &&&   $(\wedge^2 \scS^{*})^{\oplus 4}$ & $92$ & $104$ & $-64$ & $1$ & \\
  \label{row29}\theexno & (c7) &&   $\wedge^3 \scQ \oplus \mathcal{O}(1)^{\oplus 2}$ & $102$ & $108$ &  $-84$ & $2$ & crepant resolution of $Z' \subset \P^5$\\
  \label{row30}\theexno & (d1) & $G(4,8)$ &   $\Sym^2 \scS^{*} \oplus \mathcal{O}(1)^{\oplus 3}$ & $256$ & $256$ &  $-256$ & $2$ &   $\left(\P^7 \right)_{2^4}\sqcup \left(\P^7 \right)_{2^4}$ \\
  \label{row31}\theexno &&&   $(\wedge^2 \scS^{*})^{\oplus 2} \oplus \mathcal{O}(2)$ &   $48$ & $96$ & $-128$ & $4$ &   $\left(\prod^4 \P^1\right)_2 $ \\
  \label{row32}\theexno & (d2) & $G(4,9)$ &   $\Sym^2 \scS^* \oplus \wedge^2 \scS^* \oplus \mathcal{O}(1)$ &  $384$ & $192$ & $-128$ & $4$ &   $\left(\prod^4 \P^1\right)_2$ \\
  \label{row33}\theexno & (d3) & $G(5,10)$ &   $(\wedge^2 \scS^{*})^{\oplus 2} \oplus \mathcal{O}(1)^{\oplus 2}$ &  $120$ & $180$ & $-220$ & $5$ &   $\left(\prod^5 \P^1\right)_{1^2}$ \\
\bottomrule
\end{cy}
\caption{Complete intersection Calabi--Yau $3$-folds in
  Grassmannians}
\label{tb:cy3}
\end{table}
\normalsize
%]
}

%Actually we give a procedure to 
%classify complete intersection Calabi--Yau $d$-folds 
%with respect to homogeneous vector bundles on Grassmannians
%for a positive integer $d$.
%\vspace{2mm}
In \cite{MR3848432},
V. Benedetti independently gives a similar classification
%has shown independently the stronger result than our classification in \cref{mainthm_en} 
in the context 
of classifying hyperk\"ahler 4-folds. 
He classifies complete intersection Calabi--Yau $d$-folds for $2\le d \le 4$
with respect to
direct sums of globally generated irreducible homogeneous vector bundles on
not only ordinary Grassmannians but also symplectic and 
orthogonal Grassmannians.
%It would be worth mentioning that 
In his list,
there are examples of Calabi--Yau 3-folds other than those in \cref{tb:cy3},
 such as Borcea's 
examples \cite{Bor}, i.e.\ \cite[Table 10, (ow15)--(ow18)]{MR3848432}.
On the other hand, 
we also show the finiteness of such Calabi--Yau $d$-folds in ordinary Grassmannians for each $d$
(\cref{mainprop}) and
study alternative descriptions of the obtained Calabi--Yau $3$-folds.

In \cite{Ku},
Kuznetsov gave alternative descriptions of the Fano $4$-folds in
\cite{kuchle}  with Picard number greater than $1$, i.e.\ (b4), (b9), (c7), (d3) in K\"{u}chle's list.
In the paper,
Kuznetsov asked whether $Z_{\scQ (1)} \subset G(2,6)$ and $Z_{\calo(1)^{\oplus 6}} \subset G(2,7)$ are deformation equivalent or not
since they have the same collections of discrete invariants.
Soon after a preliminary version of \cite{Ku} appeared,
Manivel gave an affirmative answer in \cite{Ma}.
In fact, Manivel showed that these two types of varieties 
are the same up to projective equivalence.

Following \cite{Ku} and \cite{Ma},
we also study alternative descriptions of most of the Calabi--Yau 3-folds
in \cref{tb:cy3}
in \cref{sec_description,sec_description2,sec_description4}.
We explain the notation in the right-most column of \cref{tb:cy3} briefly.

We use the notation $X_{d_1,\dots,d_r}$,
which means a complete intersection 
of $r$ general hypersurfaces of degree $d_1 ,\dots , d_r$ in $X$.
If $X$ is one of Grassmannians $G(k,n)$,
(irreducible components of)
orthogonal Grassmannians $OG(k,n)$, Lagrangian Grassmannians
$LG(k,2k)$, a $G_2$-Grassmannian $G_2/P_1$, or $\Sigma$ in \cref{row7}, which is a Schubert variety in the Cayley plane $\mathbb{O}\P^2$,
the degrees are defined with respect to the unique ample
generator of the Picard group of the homogeneous space. 
For the case of  $X=\prod^l \P^s$, the degrees of 
the hypersurfaces appearing here
are defined with respect to $\scO_X(1,\dots ,1)$.
The symbol $\star \leadsto$ in \cref{row5,row16,row21} means that the Calabi--Yau 3-folds in \cref{row5,row16,row21}
are specializations of Calabi--Yau 3-folds $\star$.
The variety $Z' \subset \P^5$ in \cref{row29} is a general complete intersection of two cubic hypersurfaces containing the Veronese surface $\P^2 \hookrightarrow \P^5$.

For instance,
\cref{row4,row7} are described as complete intersections of hypersurfaces in 
Schubert varieties in \cref{rmk_galkin}.
Similarly, 
\cref{row5} is a specialization of a complete intersection $G(2,5)\cap G(2,5)$ of two
Grassmannians by \cref{prop_No4}.
\cref{row16,row21} are also specializations of complete intersections of 
hyperplanes in the same Grassmannians by \cref{prop_specialize_of_(1)^n}.
Although experts might have known such descriptions,
we give proofs
since we could not find them written explicitly in the literature.
Other descriptions not mentioned here are summarized in \cref{sec_description4}.
%We give another descriptions of 
%\cref{row4,row5,row7,row16,row17,row18,row21} in \cref{tb:cy3}.
%See \cref{sec_description,sec_description2,sec_description3} 
%for the precise description.
\\

\vspace{2mm}
This paper is organized as follows.
In \cref{sec:finite}, we show \cref{mainprop},
which states that there are at most finitely many 
families of complete intersection
Calabi--Yau $d$-folds with respect to direct sums of 
irreducible homogeneous vector bundles on
Grassmannians for a fixed dimension $d>0$.
The proof gives a concrete procedure to classify such $d$-folds.
In \cref{sec:CY3}, 
we exemplify the classification of such Calabi--Yau $3$-folds,
and show
\cref{mainthm_en}.
In  \cref{sec_description,sec_description2,sec_description4},
we study alternative descriptions of Calabi--Yau $3$-folds
%\cref{row4,row5,row7,row16,row17,row18,row21} 
in \cref{tb:cy3}.
Throughout this paper,
we work over the complex number field $\C$.

\begin{acknowledgements}
The authors thank Professor Shinobu Hosono and Doctor Fumihiko Sanda 
for valuable discussions 
at weekly seminars and for various useful comments.
The authors also thank the anonymous referees
for providing a number of valuable comments and suggesting to simplify the arguments on the
alternative description of \cref{row15,row17,row18} in \cref{sec_description4}.
A.~I.~was supported by the Grant-in-Aid for JSPS fellows, No.\ 26--1881.
A part of this work was done when M.~M.~was supported by Frontiers of
Mathematical Sciences and Physics at University of Tokyo. 
M.~M.~was also supported by Korea Institute for Advanced Study.
\end{acknowledgements}

%]
%]
%[ proof of Theorem
\section{Finiteness of complete intersection Calabi--Yau manifolds}
\label{sec:finite}
%[ conditions and assumptions

%In this section, we fix a positive integer $d >0$.
In this section, we give a procedure to list up
all direct sums of irreducible homogeneous vector bundles $\scF$ on Grassmannians 
such that $Z_\scF$ is a $d$-dimensional Calabi--Yau manifolds
for a fixed $d >0$.\\

%\subsection{Conditions and assumptions}
%In this section, we give a procedure to list up all homogeneous vector bundles on Grassmannians 
%that satisfy the conditions in \cref{mainthm_en} for a fixed $d>0$.

%Let 
%$\scS$ and $\scQ$ be the universal subbundle and the universal 
%quotient bundle on a Grassmannian $G(k,n)$, respectively.
%Hence there exists an exact sequence on $G(k,n)$
%\begin{align*}
%0\rightarrow \scS \rightarrow \scO^{\oplus n} \rightarrow \scQ \rightarrow 0 .
%\end{align*}
Recall that $\scS$ and $\scQ$ are the universal subbundle and the universal 
quotient bundle on a Grassmannian $G(k,n)$, respectively.
We denote by $S_\lambda \scS^*$ (resp.\ $S_\mu \scQ$)
the globally generated homogeneous vector bundle corresponding to
the Schur module $S_\lambda \C^k$ (resp.\ $S_\mu \C^{n-k}$) with respect
to a Young diagram $\lambda=(\lambda_1\ge \dots \ge \lambda_k\ge 0)$
of $GL(k)$
(resp.\ $\mu=(\mu_1\ge\dots \ge \mu_{n-k}\ge 0)$ of $GL(n-k)$).

Any irreducible globally generated 
homogeneous vector bundle $\scE$ on $G(k,n)$ can be written as 
\begin{align}
  \label{generalform_en}
\scE=S_\lambda \scS^* \otimes S_\mu \scQ \otimes \scO(p)
\end{align}
for some $\lambda,\mu$ and $p \geq 0$ with $\lambda_k=0$ and 
$\mu_{n-k}=0$.
We note that 
$S_\lambda \scS^*(p)=S_\lambda \scS^*\otimes \scO(p)$ is isomorphic to 
$S_{\lambda'} \scS^*$ %are isomorphic as homogeneous vector bundles
for 
$\lambda'=(\lambda_1'\ge \dots \ge \lambda'_k\ge 0)$ with
$\lambda_j'=\lambda_j+p$, and so is $S_\mu \scQ(p)= S_\mu \scQ \otimes \scO(p)$.\\

Let $\scF$ be 
a direct sum of globally generated irreducible homogeneous vector bundles on 
a Grassmannian $G(k,n)$
such that 
the zero locus $Z_\scF$ of a general global section
of $\scF$ has the properties that $\dim Z_\scF=d$ and $  c_1(Z_\scF)=0$.
By the adjunction formula for $Z_\scF \subset G(k,n)$,
it holds that
\begin{align}
  \label{conditions_en}
  \rank\scF = k(n-k)-d
  \quad \text{ and } \quad
  c_1(\scF) = n.
\end{align}

The conditions \cref{conditions_en} impose several
restrictions on an irreducible component
$\scE\subset \scF$.
One of the restrictions is the following. %given by \cref{threekinds_en}.
\begin{lem}[{\cite[Corollary 3.5 (a)]{kuchle}}]
  \label{threekinds_en}
  Let %$\scF$ be a homogeneous vector bundle that satisfies the conditions \cref{conditions_en} and 
  $\scE  = S_\lambda \scS^*\otimes S_\mu \scQ \otimes \scO(p)$ 
   with $\lambda_1 \ge \dots \ge \lambda_k = 0$ and $\mu_1 \ge \dots
  \ge \mu_{n-k} = 0$
  be an irreducible component of $\scF$.
  Then  either $\lambda = 0$ or $\mu = 0$ holds, so that
  $\scE = S_\lambda \scS^* (p)$ or $\scE = S_\mu \scQ (p)$. 
  %and $\scO(p)$ for $\lambda=(\lambda_1 \ge \dots
  %\ge \lambda_k\ge 0)$ with $\lambda_1\ne \lambda_k$, 
  %$\mu=(\mu_1 \ge \dots
  %\ge \mu_{n-k}\ge 0)$ with $\mu_1\ne \mu_{n-k}$
  %and $p > 0$, respectively.
\end{lem}
\begin{proof}
Otherwise, it contradicts $d>0$ since $\rank \scF \geq \rank \scE \ge 
\rank \scS \cdot \rank \scQ =  k(n-k)$.
\end{proof}

Let $\scF$ be a direct sum of globally generated irreducible  homogeneous vector bundles on $G(k,n)$
such that $Z_{\scF} $ is a Calabi--Yau $d$-fold.
By \cref{threekinds_en},
such $\scF$ must be decomposed as 
  \begin{align}
  \label{eq_decomp}
  \scF=\scF^\scS \oplus \scF^\scQ \oplus \scF^{\mathrm{line}},
\end{align}
where $\scF^\scS$, $\scF^\scQ$ and $\scF^{\mathrm{line}}$ are direct sums of 
irreducible homogeneous vector bundles of type $S_\lambda \scS^*$,
$S_\mu \scQ$, and $\scO(p)$ with $\lambda_1\ne \lambda_k \geq0$, $\mu_1\ne \mu_{n-k}\geq 0$, and $p> 0$, respectively.

%For the classification in \cref{mainthm_en},
We consider the following conditions \cref{ass:1}, \dots,
\cref{ass:4}.
\begin{assumptions}
  \setlength{\itemsep}{2pt}
  \item \label{ass:1} $k\ge2$,
  \item \label{ass:2} $n\ge 2k$,
  \item \label{ass:3} $\scS^*\not \subset \scF^\scS$ and $\scQ\not \subset \scF^\scQ$,
 % \item \label{ass:5} $\scF^\scQ=0$ if $n=2k$,
  \item \label{ass:4} 
	$\Sym^2 \scQ \not
	\subset \scF^\scQ$ and $\wedge^2 \scQ \not \subset \scF^\scQ$ 
	for $n>2k$,
%	And if $\wedge^2 \scQ(p) \subset \scF^\scQ$, 
%	$n=2k+1$ and $p\ge 1$ hold,
\end{assumptions}
where $\scE \not \subset \scE'$ means that $\scE$ is not contained in $\scE'$ 
as an irreducible component.

\vspace{1mm}
In fact, we may assume  \cref{ass:1}, \dots,
\cref{ass:4} in addition to \cref{conditions_en} for the classification in \cref{mainthm_en}
as we explain as follows.

If $k=1$, i.e.\ if $\scF$ is on
a projective space $\P^{n-1}$,
we easily see that $\scF^\scS= \scF^\scQ = 0$ 
because $\scS^*=\scO(1), \rank \scQ =n-1$, and $d>0$.
Hence we may assume \cref{ass:1}
unless we consider complete intersections of 
hypersurfaces in projective spaces.

Set $l=n-k$.
Under the natural isomorphism $G(k,n)\simeq G(l,n)$,
the homogeneous vector bundles $S_\lambda \scS^*$ and $S_\mu \scQ$ on $G(k,n)$
are transformed to $S_\lambda \scQ$ and $S_\mu \scS^*$ on $G(l,n)$, respectively.
Hence we assume \cref{ass:2} to avoid the duplication.

There is another natural isomorphism between 
$Z_{\scS^*}\subset G(k,n)$ and $G(k,n-1)$.
Under the isomorphism, the restrictions of homogeneous vector bundles 
$\scS^*$ and  $\scQ$ on $Z_{\scS^*}$ correspond to
$\scS^*$ and $\scQ\oplus \scO$ on $G(k,n-1)$, respectively.
%From the argument in the previous paragraph, 
A similar property holds for
$Z_\scQ\subset G(k,n)$ and $G(k-1,n-1)$.
%Since we can reduce these homogeneous vector bundles,
Hence we assume \cref{ass:3} as in \cite[Lemma 3.2 (ii)]{kuchle}.

%%%%%%%%%%%%%%%%%%%%%%%%
%%%%%%%%%%%%%%%%%%%%%%%%%%

Finally, by considering possible dimensions of isotropic
subspaces in $\C^n$ equipped with a symmetric or a skew-symmetric
form of maximal rank,
we see that $Z_{\Sym^2 \scQ}=\emptyset$ for $n>2k$ and
$Z_{\wedge^2 \scQ}=\emptyset$ for $n>2k+1$.
Furthermore, $Z_{\wedge^2 \scQ}\subset G(k,2k+1)$ is 
isomorphic  to $Z_{\wedge^2 \scQ}\subset G(k,2k)$,
under which the restrictions of homogeneous vector bundles on 
$Z_{\wedge^2 \scQ}\subset G(k,2k+1)$
are transformed to the restrictions of other kinds of homogeneous vector bundles on
$Z_{\wedge^2 \scQ}\subset G(k,2k)$.
Hence we also assume $\wedge^2 \scQ \not \subset \scF$ for 
$n=2k+1$. Therefore, we assume \cref{ass:4} as in \cite[Lemma 3.2 (iii)]{kuchle}.

\vspace{2mm}
In this section,
we show the following proposition.

\begin{prop}%[\cref{mainthm_en}]
  \label{mainprop}
  Consider a globally generated homogeneous vector bundle $\scF =  \scF^\scS \oplus \scF^\scQ \oplus \scF^{\mathrm{line}}$ on $G(k,n)$, which is decomposed as in \cref{eq_decomp}.
%  \begin{align}
%  \label{eq_decomp}
%  \scF=\scF^\scS \oplus \scF^\scQ \oplus \scF^{\mathrm{line}},
%\end{align}
%where $\scF^\scS$, $\scF^\scQ$ and $\scF^{\mathrm{line}}$ are direct sums of 
%irreducible homogeneous vector bundles of type $S_\lambda \scS^*$,
%$S_\mu \scQ$, and $\scO(p)$ with $\lambda_1\ne \lambda_k \geq0$, $\mu_1\ne \mu_{n-k}\geq 0$, and $p> 0$, respectively.
  
For a fixed positive integer $d >0$,
there are at most finitely many choices of positive integers $k ,n$,
and bundles $\scF$ on $G(k,n)$
such that \cref{conditions_en} and
the %following
assumptions \cref{ass:1}, \dots,
\cref{ass:4} are satisfied.
\end{prop}

%\begin{assumptions}
%  \setlength{\itemsep}{2pt}
%  \item \label{ass:1} $k\ge2$,
%  \item \label{ass:2} $n\ge 2k$,
%  \item \label{ass:3} $\scS^*\not \subset \scF^\scS$ and $\scQ\not \subset \scF^\scQ$,
% % \item \label{ass:5} $\scF^\scQ=0$ if $n=2k$,
%  \item \label{ass:4} 
%	$\Sym^2 \scQ \not
%	\subset \scF^\scQ$ and $\wedge^2 \scQ \not \subset \scF^\scQ$ 
%	for $n>2k$,
%%	And if $\wedge^2 \scQ(p) \subset \scF^\scQ$, 
%%	$n=2k+1$ and $p\ge 1$ hold,
%\end{assumptions}
%where $\scE \not \subset \scE'$ means that $\scE$ is not contained in $\scE'$ as an irreducible component.

%First, consider $\scF$ which satisfy the following five assumptions \cref{ass:1}, \dots, \cref{ass:4} hold.

%%%%%%%%%%%%%%%%%%%%%%%%
%%%%%%%%%%%%%%%%%%%%%%%%%%

In conclusion,
we have the following corollary of \cref{mainprop}.
\begin{cor}\label{cor_finiteness}
For a fixed positive integer $d >0$,
there are at most finitely many families of complete intersection Calabi--Yau $d$-folds in Grassmannians 
with respect to direct sums of globally generated irreducible
homogeneous vector bundles.
%up to natural identifications among Grassmannians explained as above.}
%between $G(k,n) $ and $ G(n-k,n)$,
%$Z_{\scS^*} \subset G(k,n)$ and $G(k-1,n)$,
%$Z_{\scQ} \subset G(k,n)$ and $G(k-1,n-1)$,
%$Z_{\wedge^2 \scQ}\subset G(k,2k+1)$
%and
%$Z_{\wedge^2 \scQ}\subset G(k,2k)$.
\end{cor}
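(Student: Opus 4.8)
Granting \cref{mainprop}, the corollary is immediate: if $Z_\scF\subset G(k,n)$ is a Calabi--Yau $d$-fold then $\scF$ has the shape \cref{eq_decomp} by \cref{threekinds_en} and \cref{conditions_en} holds by adjunction, so using the natural isomorphisms $G(k,n)\simeq G(n-k,n)$, $Z_{\scS^*}\simeq G(k,n-1)$, $Z_{\scQ}\simeq G(k-1,n-1)$ and $Z_{\wedge^2\scQ}\subset G(k,2k+1)\simeq Z_{\wedge^2\scQ}\subset G(k,2k)$ recalled above one may assume in addition \cref{ass:1}--\cref{ass:4}; then \cref{mainprop} leaves only finitely many triples $(k,n,\scF)$, and each gives a single deformation family since the smooth zero loci produced by \cite{mukai} are parametrised by a connected (Zariski open) subset of $H^0(G(k,n),\scF)$. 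So the real content is \cref{mainprop}, which I would prove as follows.

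First, it is enough to bound $k$ and $n$ in terms of $d$: on a fixed $G(k,n)$ a bundle $\scF$ with $\rank\scF=k(n-k)-d$ has at most $k(n-k)-d$ irreducible summands, each of the form $S_\lambda\scS^*\otimes S_\mu\scQ\otimes\scO(p)$ with $c_1=\bigl(\tfrac{|\lambda|}{k}+\tfrac{|\mu|}{n-k}+p\bigr)\rank\le c_1(\scF)=n$, so $|\lambda|,|\mu|,p$ are bounded and only finitely many $\scF$ occur. Next comes a \emph{slope estimate}, using $c_1(S_\lambda\scS^*)=\tfrac{|\lambda|}{k}\rank$, $c_1(S_\mu\scQ)=\tfrac{|\mu|}{n-k}\rank$, $c_1(\scO(p))=p$ together with \cref{threekinds_en}, \cref{ass:3} (so $|\lambda|\ge 2$, $|\mu|\ge 2$) and \cref{ass:4} (so $|\mu|\ge 3$ once $n>2k$): every $\scS^*$-summand and every line summand $\scE$ satisfies $\rank\scE\le\tfrac k2 c_1(\scE)$, and the only possible exceptions are $\scQ$-summands $S_\mu\scQ$ with $|\mu|<\tfrac{2(n-k)}{k}$, which forces $n-k>\tfrac{3k}{2}$; but a column shape minimises the dimension, so such a summand has $\rank\ge\binom{n-k}{|\mu|}\ge\binom{n-k}{3}$, and for $k$ past an explicit bound $\binom{n-k}{3}>k(n-k)\ge\rank\scF$ whenever $n-k>\tfrac{3k}{2}$, a contradiction. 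Hence for large $k$ every summand obeys $\rank\scE\le\tfrac k2 c_1(\scE)$, so $k(n-k)-d=\rank\scF\le\tfrac k2 c_1(\scF)=\tfrac{kn}{2}$, and \cref{ass:2} gives $0\le n-2k\le\tfrac{2d}{k}$. Thus $e:=n-2k$ takes one of finitely many values, and it remains to bound $k$ with $e$ fixed.

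In the regime $n=2k+e$, for $k$ large relative to $e$ every summand satisfies $c_1/\rank\ge2/k$, with equality exactly for the \emph{extremal} bundles $\wedge^2\scS^*$, $\Sym^2\scS^*$ (and, only when $e=0$, also $\wedge^2\scQ$, $\Sym^2\scQ$, otherwise excluded by \cref{ass:4}). Writing the deficiency $\delta:=\tfrac k2 c_1(\scF)-\rank\scF$ as the sum of the nonnegative per-summand terms $\tfrac k2 c_1(\scE_i)-\rank\scE_i$, one computes $\delta=\tfrac k2(2k+e)-\bigl(k(k+e)-d\bigr)=d-\tfrac{ke}{2}$. If $e\ge1$ this forces $k\le 2d/e$. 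If $e=0$ then $\delta=d$, while every \emph{non}-extremal summand contributes at least $\tfrac{k-2}{2}$ (a line bundle $\scO(p)$ contributes $\tfrac k2 p-1\ge\tfrac k2-1$) or at least $\tfrac k2$ (an $\scS^*$- or $\scQ$-summand with $|\lambda|\ge3$, resp.\ $|\mu|\ge3$, which has rank $\ge k$); hence for $k>2d+2$ no non-extremal summand can appear, so $\scF$ is a direct sum of extremal bundles, $\rank\scF=\tfrac k2 c_1(\scF)=k^2$, and $d=0$ --- contradicting $d>0$. Therefore $k$ is bounded, hence so is $n$, and by the first step only finitely many $\scF$ remain; this proves \cref{mainprop} and the corollary.

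The step I expect to be hardest is the last one: in the regime $n\approx 2k$ the estimate $\rank\scF\le\tfrac k2 c_1(\scF)$ is an equality up to the fixed quantity $d$, so the argument has no slack and one must pin down exactly which homogeneous bundles can occur as summands with bounded total deficiency --- in practice a short but delicate finite case analysis. A secondary nuisance is proving the ``minimal representation dimension'' inequalities used in the slope step, together with the separate treatment of the finitely many small Grassmannians ($k$ or $n-k$ small) for which those inequalities must be checked by hand.
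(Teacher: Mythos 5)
Your reduction of the corollary to \cref{mainprop} is fine and matches the paper, and your overall strategy for \cref{mainprop} (an additive ``deficiency'' $\tfrac k2 c_1(\scE)-\rank\scE$ per summand, which is half the paper's invariant $\kappa_\scE=kc_1(\scE)-\rank\scE$) is the right one. But there is a genuine gap in the slope step for the $\scQ$-summands. You claim an exceptional summand $S_\mu\scQ$ (one with $|\mu|<\tfrac{2(n-k)}{k}$) has $\rank\ge\binom{n-k}{|\mu|}\ge\binom{n-k}{3}$; the second inequality fails whenever $|\mu|>n-k-3$, and the bundles sitting there are exactly $\wedge^{l-1}\scQ$ and $\wedge^{l-2}\scQ$ ($l=n-k$), of rank only $l$ and $\binom{l}{2}$. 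For $k=2$ the bundle $\wedge^{l-1}\scQ$ has $c_1=l-1<l=\rank$, so it violates $\rank\scE\le\tfrac k2c_1(\scE)$ for \emph{every} $l$, and your rank-budget contradiction never kicks in. Hence the inequality $n-2k\le 2d/k$ is not established for $k=2$ (nor for a range of $l$ when $k=3,4$), and for those $k$ your argument never bounds $n$ at all. This cannot be dismissed as ``finitely many small Grassmannians checked by hand'': $G(2,n)$ with $n$ unbounded is an infinite family, and these low-rank, bad-slope summands are precisely the source of the infinite series $(\beta k),(\gamma k),(\delta k)$ in the paper, which are finite for fixed $d$ only because their dimension grows like $k^2$ --- a fact your deficiency count does not see once a summand of negative deficiency is allowed.

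This is exactly where the paper concentrates its effort: \cref{prop:Q} runs an eight-case analysis (Q1)--(Q8) to show that the only $\scQ$-summands surviving the rank and $c_1$ budgets are $\wedge^{l-1}\scQ$ and $\scQ(1)$, and then \cref{lem:AB,prop:A2B0,lem:AB1} bound their multiplicities $(A,B)\in\{(2,0),(1,0),(0,1),(0,0)\}$ and control the complementary factor $\scG=\scF^{\scS}\oplus\scF^{\mathrm{line}}$ using the residual $c_1$ budget $c_1(\scG)=n-A(l-1)-B(l+1)$ (which is at most $k+1$) together with $\kappa$. To repair your proof you would need an analogous argument: after isolating the near-column exceptions, use that $c_1(\wedge^{l-1}\scQ)=l-1$ consumes almost all of $c_1(\scF)=n$, so the rest of $\scF$ has $c_1\le k+1$ and hence bounded rank, which bounds $l$. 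Your remaining steps (the $e\ge1$ and $e=0$ dichotomy for large $k$) are essentially sound, and the $e=0$ analysis parallels the paper's \cref{lem:Smain} and the end of the proof of \cref{mainprop}; the unproved ``column minimises dimension'' claim is also a real obligation, but a secondary one compared with the missing treatment of $k=2,3,4$ with $n$ large.
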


\vspace{2mm}

In the rest of this section,
we give a proof of \cref{mainprop}.
To show \cref{mainprop},
we may assume one more condition in the case of $n=2k$.

\begin{lem}\label{lem_F^Q=0}
To prove \cref{mainprop},
it suffices to show the finiteness of the choices of $k,n$, and $\scF$
under the additional condition
\begin{assumptions}
  \setlength{\itemsep}{2pt}
  \setcounter{assumptionsi}{4}
  \item \label{ass:5} $\scF^\scQ=0$ if $n=2k$.
\end{assumptions}
\end{lem}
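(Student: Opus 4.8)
The plan is to show that the assumed finiteness of the triples $(k,n,\scF)$ satisfying \cref{conditions_en} together with \cref{ass:1}--\cref{ass:5} yields the finiteness of those satisfying only \cref{conditions_en} and \cref{ass:1}--\cref{ass:4}, which is exactly \cref{mainprop}. The tool is a finite-to-one substitution: given a triple $(k,n,\scF)$ satisfying \cref{conditions_en} and \cref{ass:1}--\cref{ass:4} but failing \cref{ass:5} — so that $n=2k$ and $\scF^\scQ\neq 0$ — I will replace $\scF$ by a homogeneous vector bundle $\scF'$ on the same $G(k,2k)$ which satisfies \cref{conditions_en} and all of \cref{ass:1}--\cref{ass:5}, in such a way that only finitely many $\scF$ map to a given $\scF'$.

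Concretely, on $G(k,2k)$ the bundles $\scS^*$ and $\scQ$ have the same rank $k$ and the same first Chern class $c_1(\scO(1))$; since the rank and the first Chern class of a Schur functor $S_\mu E$ depend only on $\rank E$ and $c_1(E)$, the same holds for $S_\mu\scS^*$ and $S_\mu\scQ$ for every Young diagram $\mu$ (and $\mu$ is legitimate for $GL(k)$ because $n-k=k$). So I define $\scF'$ by leaving $\scF^\scS$ and $\scF^{\mathrm{line}}$ untouched and converting each irreducible summand $S_\mu\scQ$ of $\scF^\scQ$ into $S_\mu\scS^*$, with $(\scF')^\scQ=0$; here $S_\mu\scS^*$ is again a legitimate summand of $\scS^*$-type since $\mu_1\neq\mu_k$. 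Then $\scF'$ is globally generated and homogeneous, $\rank\scF'=\rank\scF=k(n-k)-d$ and $c_1(\scF')=c_1(\scF)=n$, so \cref{conditions_en} survives; \cref{ass:5} holds by construction; \cref{ass:1} and \cref{ass:2} depend only on $k,n$; and \cref{ass:4} is vacuous for $n=2k$.

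The one point that requires care — and the only genuine obstacle — is \cref{ass:3} for $\scF'$. As $(\scF')^\scQ=0$, only $\scS^*\not\subset(\scF')^\scS$ must be checked. The irreducible summands of $(\scF')^\scS$ are those of $\scF^\scS$, among which $\scS^*$ does not occur by \cref{ass:3} for $\scF$, together with the $S_\mu\scS^*$ produced from the summands $S_\mu\scQ$ of $\scF^\scQ$; such a bundle equals $\scS^*$ only for $\mu=(1,0,\dots,0)$, i.e.\ only if $\scQ\subset\scF^\scQ$, which is excluded again by \cref{ass:3} for $\scF$. Thus this is precisely where the clause $\scQ\not\subset\scF^\scQ$ of \cref{ass:3} is used. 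Finally, $\scF\mapsto\scF'$ has finite fibres: any preimage of a fixed $\scF'$ arises by selecting a nonempty sub-multiset of the irreducible $\scS^*$-type summands of $\scF'$ and converting each chosen $S_\nu\scS^*$ back to $S_\nu\scQ$, and there are at most $2^{\rank\scF'}$ such choices, with $\rank\scF'$ a fixed integer. Hence the set of triples satisfying \cref{ass:1}--\cref{ass:4} but not \cref{ass:5} maps with finite fibres into the (assumed finite) set satisfying \cref{ass:1}--\cref{ass:5}, so it is finite; adding back the triples that do satisfy \cref{ass:5} keeps the total finite, which proves \cref{mainprop}.
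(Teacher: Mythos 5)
Your proposal is correct and follows essentially the same route as the paper: on $G(k,2k)$ one converts each summand $S_\mu\scQ$ of $\scF^\scQ$ into $S_\mu\scS^*$, using $\rank\scS^*=\rank\scQ$ and $c_1(\scS^*)=c_1(\scQ)$ to preserve \cref{conditions_en}, and observes that this substitution is finite-to-one. Your explicit verification of \cref{ass:3} for $\scF'$ (via the clause $\scQ\not\subset\scF^\scQ$) is a detail the paper leaves implicit, but the argument is the same.
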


\begin{proof}
Assume $n=2k$ and take $ \scF=\scF^\scS \oplus \scF^\scQ \oplus
\scF^{\text{line}}$ that satisfies \cref{conditions_en} and
\cref{ass:1},\dots,\cref{ass:4}.
Write $\scF^\scQ=\bigoplus_{\mu} ({S_{\mu}\scQ })^{\oplus a_{\mu} }$ with $a_{\mu} \geq 0$.
Since $\rank \scS^* =\rank \scQ$ and $c_1(\scS^*)=c_1(\scQ)$,
$\scF' := \scF'^\scS  \oplus \scF^{\text{line}} $ also satisfies \cref{conditions_en} and \cref{ass:1},\dots,\cref{ass:4},
where $\scF'^\scS := \scF^\scS \oplus \bigoplus_{\mu} ({S_{\mu}\scS^* })^{\oplus a_{\mu} }$.
%By definition, $\scF'$ also satisfies \cref{ass:5}.

Then $\scF$ is obtained from $\scF'  $ by replacing some components $S_{\lambda} \scS^* $ of $\scF' $ by $S_{\lambda} \scQ  $.
Since we have at most finitely many such $\scF$ from a fixed $\scF'$,
it suffices to show the finiteness of $\scF'$, which satisfies the additional condition \cref{ass:5}.
\end{proof}

%To complete the list of complete intersection manifolds for $n=2k$,
%we have to generate all homogeneous vector bundles 
%from the results by the procedure so far, 
%where we impose the assumption \cref{ass:5}.
%If we replace some of irreducible components of $\scF^\scS$ by
%the corresponding bundles in $\scF^\scQ$, we get a new solution.

\begin{example}\label{eg_A5}
Contrary to \cref{ass:2}--\cref{ass:4}, \cref{ass:5} is not a condition to avoid duplication.
Consider $\scF_1= (\wedge^3 \scS^*)^{\oplus 2} \oplus \scO(1)^{\oplus 2}$ on $G(4,8)$.
Then $\rank {\scF_1}=10$, $c_1(\scF_1)=8$, and hence $Z_{\scF_1} $ is a Calabi--Yau $6$-fold.
Replacing one of the irreducible component $\wedge^3 \scS^* \subset \scF$ by $\wedge^3 \scQ$,
we obtain $\scF_2 =  \wedge^3 \scS^* \oplus \wedge^3 \scQ \oplus \scO(1)^{\oplus 2}$,
from which we have another Calabi--Yau $6$-fold $Z_{\scF_2} $.
Since the Euler numbers of these varieties are computed as
$\chi \left(Z_{\scF_1}\right)=14148 \text{ and } 
\chi \left( Z_{\scF_2} \right) =14328$,
we see that $Z_{\scF_1}$ and $Z_{\scF_2}$ are not isomorphic.
\end{example}

%In the rest of this section, we give a proof of \cref{mainprop}.
Throughout Subsections \ref{sec:Q} and \ref{sec:S},
$\scF=\scF^\scS \oplus \scF^\scQ \oplus \scF^{\text{line}}$
is a vector bundle as in \cref{mainprop}
satisfying \cref{conditions_en} and \cref{ass:1}--\cref{ass:5}.

%]
%[ Q-side
%\subsection{The case of $\scF^\scQ\ne 
%  (\wedge^{l-1}\scQ)^{\oplus A}\oplus
%  \left(\scQ(1)\right)^{\oplus B}$}
\subsection{The case of \texorpdfstring{$\scF^\scQ\ne 
  (\wedge^{l-1}\scQ)^{\oplus A}\oplus
  \left(\scQ(1)\right)^{\oplus B}$}{}}
\label{sec:Q}
%Let $\scF$ be a homogeneous vector bundle on a Grassmannian $G(k,n)$ that satisfies \cref{conditions_en}
%and the assumptions \cref{ass:1}-\cref{ass:5}.
We begin with the case $\scF^\scQ\ne0$.
In this case, we have $n>2k$ by
\cref{ass:2,ass:5}.
Let us write $l:=n-k>k$.

 \begin{lem}
  \label{lem_remainS}
  Let $\scE$ be an irreducible component of $\scF$. 
  If $0\le c_1(\scE) \le k-1$,
   then $\scE$ is one of 
  $\scO(p)$ for some $0 < p \le k-1$, $\wedge^2 \scS^*$, and $\wedge^{k-1}\scS^*$.
 \end{lem}

 \begin{proof}

	If $\scE \subset \scF^{\text{line}}$, it is obvious that
	$\scE = \scO(p)$ for some $0 < p \le k-1$.
	From \cref{threekinds_en}, we may assume $\scE =S_\lambda \scS^*$ or $S_\mu \scQ$
	with $\lambda_1 \neq \lambda_k \geq 0$ or $\mu_1 \neq \mu_l \geq 0$. %We note that $\mu_l$ or $\lambda_k$ could be positive.

	Let $\scE=S_\lambda \scS^*$ first.
	We have formulas
  \begin{align}\label{eq_rank_e_S}
	 &\rank S_\lambda \scS^*=
	 \prod_{1\le i<j\le k}\frac{j-i+\lambda_i-\lambda_j}{j-i},\\
   &\label{eq_c1_e_S} c_1(S_\lambda \scS^*)=\frac{\abs{\lambda}}{k}\rank S_\lambda \scS^*,
  \end{align}
	where $\abs{\lambda} =\lambda_1+\dots +\lambda_k$.
	By \cite[Table 1]{MR0267040}, $\rank \scE \ge k^2-1$ holds 
	for all $\scE$ except
	$\scS^*(p)$, $\wedge^2 \scS^*(p)$, $\Sym^2 \scS^*(p)$, $\wedge^{k-1}\scS^*(p)$, 
	$S_{(2,\dots, 2,0)} \scS^*(p)$, $\wedge^{k-2}\scS^*(p)$, $\wedge^3 \scS^*(p)$ $(k=6,7,8)$
	and $\wedge^{k-3} \scS^*(p)$  $(k=6,7,8)$ 
  for $p\ge 0 $.
	In this case, it turns $\abs{\lambda}\ge 3$ and
	\begin{align}
					c_1(\scE) = 
					\frac{\abs{\lambda}}{k} \rank
                                        \scE \ge  \frac{\abs{\lambda}}{k}(k^2-1) \ge 3k -\frac{3}{k} 
					>k-1.
	\end{align}
	Assume $\cale$ is one of the above exceptions $\scS^*(p), \wedge^2 \scS^*(p), \dots, \wedge^{k-3} \scS^*(p)$  $(k=6,7,8)$.
	We note $\cale \neq \scS^*$ by \cref{ass:3}.
	Then we can check $c_1(\cale) \geq   k-1$ and that the equality holds only for $\cale = \wedge^2 \scS^* $ or $\wedge^{k-1}\scS^*$. 
	Hence $\cale = \wedge^2 \scS^* $ or $\wedge^{k-1}\scS^*$ by $c_1(\cale) \leq k-1$.
	
			In the case of $\scE=S_\mu \scQ$, 
			we use the formulas
			  \begin{align}\label{eq_rank_e}
	 &\rank S_\mu \scQ=
	 \prod_{1\le i<j\le l}\frac{j-i+\mu_i-\mu_j}{j-i},\\
   &\label{eq_c1_e} c_1(S_\mu \scQ)=\frac{\abs{\mu}}{l}\rank S_\mu \scQ.
  \end{align}
  %We have $ c_1(\cale) \geq l-1 > k-1$ 
	By exactly the same calculation by replacing
	 $\lambda, k$ with $\mu, l$, respectively,
	 we have $ c_1(\cale) \geq l-1 > k-1$.
	 Hence there is no $\scE=S_\mu \scQ$ with $c_1(\cale) \leq k-1$.
%%%%%%%%%%%%%%%%%%%%%%%%%%%%%%%%%%%%%%%%%%%
 \end{proof}
%[ prop:Q
\begin{lem}
  \label{prop:Q}
  %Let $\scF$ be a homogeneous vector bundle as above.
  Assume that there exists an irreducible component
  $\scE$ of $\scF^\scQ$ which is not %isomorphic to 
  $\wedge^{l-1}\scQ$ nor $\scQ(1)$.
  Then $\scF$ is one of the following:
%[ table
\def\arraystretch{1.2}
\begin{table}[H]
\centering
\begin{tabular}{ccccc}
\hline
 & $G(k,n)$ & $\scF$ & $d=\dim Z_\scF$ \\
\hline
($\alpha$1) & $G(3,8)$  & $\wedge^3 \scQ\oplus \scO(2)$ & $4$ \\
($\alpha$2) &  & $\wedge^3 \scQ\oplus \scO(1)^{\oplus 2}$ & $3$ \\
($\alpha$3) &  & $\wedge^2 \scS^*\oplus  \wedge^3 \scQ$ & $2$ \\
($\alpha$4) & $G(4,9)$  & $\wedge^3 \scQ\oplus \scO(3)$ & $9$ \\
($\alpha$5) &  & $\wedge^3 \scQ\oplus \scO(2)\oplus \scO(1)$ & $8$ \\
($\alpha$6) &  & $\wedge^3 \scQ\oplus \scO(1)^{\oplus3}$ & $7$ \\
($\alpha$7) &  & $\wedge^2 \scS^*\oplus  \wedge^3 \scQ$ & $4$ \\
($\alpha$8) &  & $\wedge^3 \scS^*\oplus  \wedge^3 \scQ$ & $6$ \\
($\alpha$9) & $G(4,10)$ & $\wedge^3 \scQ$ & $4$ \\
($\alpha$10) &   & $\wedge^4 \scQ$ & $9$ \\
($\alpha$11) & $G(5,11)$ & $\wedge^3 \scQ\oplus \scO(1)$ & $9$ \\
($\alpha$12) &  & $\wedge^4 \scQ\oplus \scO(1)$ & $14$ \\
($\beta k$) & $G(k,2k+1)$ & $\wedge^k\scQ(1)$ & $k^2-1$ \\
\hline
\end{tabular}
\end{table}
%]
\end{lem}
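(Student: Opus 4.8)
The strategy is to exploit the two numerical constraints from \cref{conditions_en}, namely $\rank\scF = k(n-k) - d$ and $c_1(\scF) = n$, together with the hypothesis that $\scF^\scQ$ contains an irreducible component $\scE = S_\mu\scQ$ (with $\mu_1 \neq \mu_{n-k}$) that is neither $\wedge^{l-1}\scQ$ nor $\scQ(1)$, and to show these force $k$, $n$, $d$ into the finite list above. First I would recall the standard formulas for homogeneous bundles on $G(k,n)$: for $\scE = S_\mu\scQ$ on $G(k,n)$ with $l = n-k$, one has $\rank\scE = s_\mu(1^l) = \prod_{1\le i<j\le l}\frac{\mu_i-\mu_j+j-i}{j-i}$ (the hook-content / Weyl dimension formula for $GL(l)$), and $c_1(\scE) = \frac{\rank\scE}{l}\cdot|\mu|$. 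Since $Z_{\scF}$ is to be $d$-dimensional with $d>0$, \cref{threekinds_en} already restricts the shape of $\mu$; the key extra input here is that $\scE$ is \emph{not} one of the two "cheapest" non-line bundles $\wedge^{l-1}\scQ = S_{(1^{l-1},0)}\scQ$ or $\scQ(1) = S_{(2,1^{l-1})}\scQ$, so $\rank\scE$ is comparatively large relative to $l$.

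\textbf{Key steps.} (1) Establish a lower bound for $\rank\scE$ in terms of $l$ (and hence $k$, since $l = n-k > k$ as noted), valid for every admissible $\mu$ other than those giving $\wedge^{l-1}\scQ$ or $\scQ(1)$; a convenient way is to enumerate the "small-rank" Young diagrams $\mu$ with $\mu_1\neq\mu_l$ and check that apart from $(1^{l-1},0)$ and $(2,1^{l-1})$ every other $\mu$ has $\rank S_\mu\scQ \ge \binom{l}{2}$ or similar, growing at least quadratically in $l$. (2) Combine with $\rank\scE \le \rank\scF = k(n-k)-d \le kl$ and $d\ge 1$ to bound $l$, hence $k$ and $n$, from above — this is the crux of the finiteness. (3) For each surviving pair $(k,n)$ in the resulting finite range, run through the finitely many choices of $\mu$ with $\rank S_\mu\scQ \le kl - 1$ and $l\mid \frac{\rank S_\mu\scQ\cdot|\mu|}{1}$ compatible with $c_1\le n$, then for each such $\scE$ enumerate the finitely many ways to complete $\scF$ by adding further irreducible components (of types $S_\lambda\scS^*$, $S_\nu\scQ$, $\scO(p)$) so that $c_1(\scF) = n$ exactly; $\rank\scF \le kl-1$ and \cref{ass:3,ass:4} leave only finitely many completions. (4) Compute $d = k(n-k) - \rank\scF$ in each case and read off the table, discarding any that violate $d>0$ or the standing assumptions, and verifying global generation. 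The infinite family $(\beta k)$: $\wedge^k\scQ(1)$ on $G(k,2k+1)$ arises precisely when $n = 2k+1$ (so $l = k+1$), $\scE = S_{(2,1^k)}\scQ = \wedge^k\scQ(1)$ has $\rank = (k+1)\cdot\frac{?}{}$ — wait, here $\wedge^{l-1}\scQ = \wedge^k\scQ$ and $\scQ(1)$ is $S_{(2,1^k)}\scQ$, so $\wedge^k\scQ(1) = \wedge^k\scQ\otimes\scO(1)$ is a \emph{different} bundle, namely $S_{(2,1^{k-1},1)}$? One must identify it correctly as $S_\mu\scQ$ with the right $\mu$; its rank is $l = k+1$ and its $c_1 = n = 2k+1$, so $\scF = \scE$ alone already satisfies \cref{conditions_en} with $d = k(k+1) - (k+1) = k^2-1$, explaining why this family cannot be bounded — it genuinely persists for all $k$.

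\textbf{Main obstacle.} The delicate point is \emph{not} the rank growth estimate per se but step (3)–(4): organizing the case analysis so it is genuinely exhaustive. In particular one must be careful that $\scF^\scQ$ may contain \emph{several} components beyond $\scE$, including copies of $\wedge^{l-1}\scQ$ and $\scQ(1)$, and that $\scF^\scS$ and $\scF^{\mathrm{line}}$ contribute too; the bound $c_1(\scF) = n$ is the only thing preventing arbitrarily many cheap line-bundle summands, so one must track $c_1$ contributions tightly. I expect the bulk of the work to be a finite but somewhat tedious verification, for each of the handful of $(k,n)$ that survive the rank bound, that the listed $\scF$'s are exactly those admitting a completion to $c_1 = n$; the bound on $l$ (hence the reduction to finitely many Grassmannians) should follow cleanly from the quadratic-in-$l$ lower bound on $\rank\scE$ against the linear-in-$l$ upper bound $\rank\scF \le kl - 1 < l^2$.
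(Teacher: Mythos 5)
Your overall strategy---write the offending component as $\scE=S_\mu\scQ$, use the Weyl dimension formula for $\rank\scE$ and $c_1(\scE)=\frac{|\mu|}{l}\rank\scE$, and run a case analysis on $\mu$ to pin down $(k,n)$ before enumerating completions---is the same as the paper's, and your treatment of the completion step (each further component costs at least $1$ in $c_1$, so $c_1(\scF)=n$ leaves finitely many choices) and of the family $(\beta k)$ is sound. But there is a genuine gap in the step you call the crux: bounding $l$ \emph{cannot} be done by the rank inequality alone. You claim that every admissible $\mu$ other than $(1^{l-1},0)$ and $(2,1^{l-1})$ has $\rank S_\mu\scQ$ growing at least quadratically in $l$, and that this contradicts $\rank\scE\le kl<l^2$. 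Both halves fail. First, $\scQ(p)$ and $\wedge^{l-1}\scQ(p)$ have rank exactly $l$, and $\Sym^j\scQ(p)$, $\wedge^2\scQ(p)$, $\wedge^{l-2}\scQ$ have rank only $O(l^2)$. Second, a quadratic lower bound such as $\binom{l}{2}$ or $\binom{l+1}{2}$ does not exceed $kl$ in general: since $k$ can be as large as $l-1$, one has $\binom{l}{2}\le kl$ whenever $2k\ge l-1$, which holds throughout the range $k+1\le l\le 2k$. Concretely, $\Sym^2\scQ(1)$ on $G(5,11)$ has rank $21<30=kl$, so your rank argument does not exclude it; it is killed only by $c_1(\scE)=28>11=n$. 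Similarly $\scQ(p)$ with $p\ge2$ has $\delta=kl-l>0$ but $c_1=pl+1>n$.

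The paper's proof supplies exactly the missing ingredient: for the ``low-rank'' shapes $\mu$ (its cases (Q3), (Q4), (Q6), (Q7), (Q8)) it switches to the constraint $\iota=n-c_1(\scE)\ge0$ combined with $n\le 2l-1$, which is what yields $l\le 6$ for $\wedge^{l-2}\scQ$ and forces $p=0$ or the boundary family $(\beta k)$ in the remaining cases; the rank inequality is decisive only when $\mu$ has two descents or a descent far from both ends (rank $\ge l^2-1$ or $\ge\binom{l}{3}$). Your proposal does invoke $c_1\le n$, but only for enumerating completions of a fixed $\scE$, not for bounding $l$ itself, so as written the reduction to finitely many Grassmannians is not established. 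A second, smaller point: the paper's completion step is not a blind search but uses that in the surviving cases $n-c_1(\scE)\le k-1$, which immediately restricts any further component to $\scO(p)$ with $p\le k-1$, $\wedge^2\scS^*$, or $\wedge^{k-1}\scS^*$; this is what makes the enumeration of $(\alpha1)$--$(\alpha8)$ short. You would need some such observation to carry out your step (3) in practice.
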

%[ proof
\begin{proof}
%Suppose that $\scF^\scQ$ has an irreducible component $\scE=S_\mu \scQ \ne \wedge^{l-1}\scQ$ nor $\scQ(1)$,
%i.e.\ $\mu\ne (1,\dots,1,0)$ nor $(2,1,\dots,1)$.
Let $\scE = S_\mu \scQ(p)$ ($\mu_l=0$, $p\ge 0$) be an irreducible component of
$\scF^\scQ$ and
\begin{align}
 \delta&:=kl - \rank \scE = \dim Z_\scE,\\
 \iota&:=n -c_1(\scE)=c_1(Z_\scE).
\end{align}

In the following, we list up all $\scE$ that satisfy
the conditions 
$\delta>0$ and $\iota\ge 0$ by using the formulas
\cref{eq_rank_e}, \cref{eq_c1_e}.

%\begin{align}\label{eq_rank_e}
%	&\rank \scE=
%	\prod_{1\le i<j\le l}\frac{j-i+\mu_i-\mu_j}{j-i},\\
%        &\label{eq_c1_e} c_1(\scE)=\frac{\abs{\mu}}{l}\rank \scE,
%\end{align}
%where $\abs{\mu} =\mu_1+\dots +\mu_l$.

First, we consider any $\scE$ with $\mu \ne 
(1, 0, \dots, 0)$, $(1,1,0,\dots, 0)$, $(2,0,\dots, 0)$, $(1,\dots, 1, 0)$,
$(2,\dots 2, 0)$, $(1,\dots, 1, 0, 0)$, 
nor $(1,1,1,0,\dots,0)$, $(1,\dots,1,0,0,0)$ for $l=6,7,8$.
In this case, 
it holds $\rank \scE \ge l^2 -1$ again by \cite[Table 1]{MR0267040}.
Then we have $\delta\le kl-(l^2-1)<0$ (since $l>k$).
This contradicts the condition $\delta>0$.

Let $\mu = (1,1,1,0,\dots,0)$ or $(1,\dots,1,0,0,0)$ for $l=6,7,8$.
In this case, all the possible irreducible bundles $\scE$ 
with $\delta>0$ can be listed in
the following.
%[ table
\def\arraystretch{1.2}
\begin{align*}
\begin{array}{ccrr}
\scE & G(k,n) & \delta=\dim Z_\scE & \iota=c_1(Z_\scE)\\
\midrule
\wedge^3 \scQ(p) & G(4,10) & 4 & -20p\\ 
& G(5,11) & 10 & 1-20p\\ 
& G(6,13) & 7 & -2-35p \\ 
\wedge^4 \scQ(p) & G(6,13) & 7 & -7-35p\\ 
\end{array}
\end{align*}
%]
The second condition $\iota \ge 0$
gives two examples of $\scF$,
($\alpha$9) and ($\alpha$11) from the first and second cases in this list with $p=0$.

Let $\mu =(1, 0, \dots, 0)$, i.e.\ $\scE=\scQ(p)$.
We may assume $p\ge 2$ by the assumptions \cref{ass:3} and $\scE \ne \scQ(1)$.
Then we have a contradiction
\begin{align}
n \ge c_1(\scE)=pl+1\ge 2l+1 >n
\end{align}
since $p\ge 2$ and $l>\frac{n}{2}$.

In the case of $\mu = (1,1,0,\dots, 0)$, $(1,\dots, 1, 0)$,
(i.e.\ $\scE=\wedge^2 \scQ(p)$, $\wedge^{l-1}\scQ(p)$, respectively),
we may assume $p\ge 1$ by
the assumptions \cref{ass:4} and $\scE \ne \wedge^{l-1}\scQ$,
respectively.
In the former case, it holds that
\begin{align}
  n \ge c_1(\scE)=l-1+p\binom{l}{2}
  \ge l-1+\binom{k+1}{2}
  \ge l+k =n
\end{align}
for any $k\ge 2$. The equalities hold simultaneously only for $k=2$, $l=k+1=3$ and $p=1$,
i.e.\ the case ($\beta 2$).
In the latter case, we also observe
\begin{align}
  n \ge c_1(\scE)=l-1+p l\ge 2l-1 \ge n,
\end{align}
since $p\ge 1$ and $l >\frac{n}{2}$.
The equalities hold simultaneously only for ($\beta k$)'s.

Let $\mu = (2,0,\dots, 0)$,
i.e.\ $\scE = \Sym^2 \scQ(p)$.
Note that the case of $p = 0$ is 
already excluded by the assumption \cref{ass:4}.
Then it holds by \cref{eq_c1_e} that 
\begin{align}
 \label{eq_c1_ex}
 c_1(\scE) = \binom{l+1}{2}\left(\frac{2}{l}+p\right) \ge
   \binom{l+1}{2}\left(\frac{2}{l}+1\right)=\frac{1}{2}(l+1)(l+2).
\end{align} 
This leads to a contradiction
 $\iota \le n-\frac{1}{2}(l+1)(l+2)<0$,
 since for $l \ge 2$ one has
\begin{align}
 \frac{1}{2}(l+1)(l+2) \ge 2(l+1) >2l > n.
\end{align}

As for $\mu = (2,\dots, 2, 0)$, 
we have
\begin{align}
 c_1(\scE)=\binom{l+1}{2}\left( \frac{2(l-1)}{l}+p\right)\ge 
 \binom{l+1}{2}\left(\frac{2(l-1)}{l}\right)=l^2-1.
\end{align}
It also leads us to
$\iota\le n-(l^2-1)<0$,  since for $l\ge k+1 \ge 3$ one has
 \begin{align}
  (l+1)(l-1) \ge 4(l-1) = 2l + 2(l-2) > n.
 \end{align}

Finally, we consider the case of $\mu = (1,\dots, 1, 0, 0)$,
i.e.\ $\scE=\wedge^{l-2}\scQ(p)$ with $p\ge 0$.
We may assume $l \geq 5$ since the cases of $\mu =(1,0,0)$ and  $(1,1,0,0)$ are already treated.
If $p\ge 1$, the condition $\iota =n-c_1(\scE) \ge 0$ gives
a contradiction
\begin{align}
  n\ge c_1(\scE)=
 \binom{l-1}{2}+p\binom{l}{2} \ge
 (l-1)^2> n
\end{align}
since $n\le 2l-1$ and $l\ge 5$. 
Let $p=0$.
 For $l \ge 7$, one has a contradiction
 \begin{align}
  n \ge c_1(\scE) = \frac{1}{2}(l-1)(l-2) \ge 3(l-2) = (2l-1)+(l-5) > n.
 \end{align}
%We have a condition
%\begin{align}
%  0& \le \iota =n- \binom{l-1}{2}
%  \le \frac{1}{2}(-l^2+7l-4) \quad (\text{since } n\le 2l-1).
%\end{align}
This gives a bound $l \le 6$, i.e.\ $l=5$ or $6$.
Together with $l\ge k+1$, one can list all possible $(k,l)$
by using another condition 
\begin{align}
  0<\delta=
  kl- \rank \scE  =kl-\binom{l}{2}.
\end{align}
Namely, 
the possible irreducible vector bundle $\scE=\wedge^{l-2}\scQ$
with $\delta>0$ and $\iota\ge 0$ should be
over one of the following Grassmannians.
The values 
$\delta$ and $\iota$ are also listed.
%[ table
\def\arraystretch{1.2}
\begin{align*}
\begin{array}{c|rrrrr}
G(k,n) & G(3,8) &G(4,9) &G(4,10) &G(5,11)\\
\midrule
\delta & 5 & 10 & 9 & 15\\
\iota  & 2 & 3  & 0 & 1 \\
\end{array}
\end{align*}
%]
We note that $\iota \le k-1$ holds for all the cases.
Hence
we have
\begin{align}
c_1(\scE') \leq n- c_1(\scE)=\iota \leq k-1
\end{align}
for any irreducible component $\scE' $ of $\scF$ other than $\scE$.
From \cref{lem_remainS},
such $\scE'$ is $\scO(p)$ for some $0<p\le k-1$,
$\wedge^2 \scS^*$ or $\wedge^{k-1}\scS^* $.
Therefore, we obtain 
the remaining ten Calabi--Yau manifolds among ($\alpha$1)--($\alpha$12).
This completes the proof.
\end{proof}
\subsection{The case of \texorpdfstring{$\scF^\scQ= 
  (\wedge^{l-1}\scQ)^{\oplus A}\oplus
  \left(\scQ(1)\right)^{\oplus B}$}{}}
\label{sec:S}
From \cref{prop:Q},
we may focus on the case of
\begin{align}
  \label{Qform}
  \scF^\scQ=(\wedge^{l-1}\scQ)^{\oplus A}\oplus
  \left(\scQ(1)\right)^{\oplus B},
\end{align}
where $A$ and $B$ are non-negative integers.
%Let $\scF$ be a homogeneous vector bundle that satisfies \cref{conditions_en}, the assumptions \cref{ass:1}-\cref{ass:5} and \cref{Qform}.
%We will show that there are only finite choices of such $\scF$ for each dimension $d>0$ (\cref{mainprop}).
Note that the assumptions \cref{ass:2} and \cref{ass:5}
imply $n\ge2k+1$ for $(A,B)\ne(0,0)$ and $n\ge 2k$ for $(A,B)=(0,0)$.
We set $\scG:=\scF^\scS \oplus \scF^{\text{line}}$ for simplicity. 
\begin{lem}
  \label{lem:AB}
  Let $\scF = \scF^Q \oplus \scG$
   be a vector bundle as in \cref{mainprop}, which satisfies 
   \cref{conditions_en} and \cref{ass:1}--\cref{ass:5}, and
   assume that $\scF^Q = (\wedge^{l-1}\scQ)^{\oplus A}\oplus
   \left(\scQ(1)\right)^{\oplus B}$. Then the
   possible values of $(A,B)$ are only
$(2,0),(1,0),(0,1)$, and $(0,0)$.
\end{lem}
\begin{proof}
  We note that $\scG$ must satisfy the conditions
\begin{align}
  \label{eq:Gcond1}
  \rank \scG&=kl-(A+B)l-d,\\
  \label{eq:Gcond2}
  c_1(\scG)&=n-A(l-1)-B(l+1),
\end{align}
where $d = \dim Z_\scF$ is a pre-fixed integer.
Since $\scG$ is globally generated,
we have $c_1(\scG) \geq 0$.
If $A\ge 3$, \cref{eq:Gcond2} gives $0\le n-3(l-1)=-2n+3k+3$. 
This contradicts the assumption \cref{ass:1} and $n\ge2k+1$.
Similarly, \cref{eq:Gcond2} gives $0\le -n+2k-2$ (resp.\ $0\le -n+2k$)  if $B\ge 2$ (resp.\ if $A\ge 1$ and $B\ge 1$).
They contradict $n \ge 2k+1$ for $(A,B) \neq (0,0)$.  %lead also contradictions.
\end{proof}

%[ lemma 
\begin{lem}
  \label{prop:A2B0}	
  If $(A,B)=(2,0)$,
 $\scF$ is one of the following.
%[ table
\def\arraystretch{1.2}
\begin{align*}
\begin{array}{cccr}
& G(k,n) & \scF & d=\dim Z_\scF \\
\midrule
\text{($\gamma k$)} & G(k,2k+1) &
\left(\wedge^{k}\scQ\right)^{\oplus 2}\oplus \scO(1) & k^2-k-3\\
\text{($\delta k$)} & G(k,2k+2) &
\left(\wedge^{k+1}\scQ\right)^{\oplus 2} & k^2-4\\
\end{array}
\end{align*}

%]
\end{lem}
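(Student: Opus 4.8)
The plan is to pin down $n$ (equivalently $l:=n-k$) from the global generation of $\scG:=\scF^{\scS}\oplus\scF^{\text{line}}$, and then to recover $\scG$ itself from its first Chern class. Since $(A,B)=(2,0)\neq(0,0)$, assumptions \cref{ass:2} and \cref{ass:5} give $n\geq 2k+1$, hence $l\geq k+1$. On the other hand \cref{eq:Gcond2} with $A=2$, $B=0$ reads $c_1(\scG)=n-2(l-1)=k-l+2$ (substituting $n=k+l$), and $c_1(\scG)\geq 0$ because $\scG$ is globally generated; therefore $l\leq k+2$. So $l=k+1$ or $l=k+2$, that is $n=2k+1$ or $n=2k+2$.

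Every irreducible component of $\scG$ is, by construction, either $S_\lambda\scS^*$ with $\lambda_1\neq\lambda_k$ or $\scO(p)$ with $p>0$; such a bundle is globally generated and nontrivial, hence has strictly positive first Chern class. If $n=2k+2$ then $c_1(\scG)=0$, which forces $\scG=0$; then $\scF=\scF^{\scQ}=(\wedge^{k+1}\scQ)^{\oplus 2}$, and \cref{eq:Gcond1} with $\rank\scG=0$ gives $d=(k-2)(k+2)=k^2-4$, the case ($\delta k$). If $n=2k+1$ then $c_1(\scG)=1$, so the positivity just noted shows that $\scG\neq0$ and that $\scG$ consists of a \emph{single} irreducible component $\scE'$ with $c_1(\scE')=1$. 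If $\scE'=\scO(p)$ then $p=1$. If $\scE'=S_\lambda\scS^*$, then using $c_1(S_\lambda\scS^*)=\tfrac{|\lambda|}{k}\rank(S_\lambda\scS^*)$ together with the fact that $\rank S_\lambda\scS^*=\dim S_\lambda\C^k\geq k$ whenever $\lambda_k=0$ and $\lambda\neq0$, a short case distinction according to whether $\lambda_k$ vanishes shows that $c_1(\scE')=1$ forces $\lambda=(1,0,\dots,0)$, i.e.\ $\scE'=\scS^*$; but $\scS^*\not\subset\scF^{\scS}$ by \cref{ass:3}. Hence $\scG=\scO(1)$ and $\scF=(\wedge^{k}\scQ)^{\oplus 2}\oplus\scO(1)$, and \cref{eq:Gcond1} with $\rank\scG=1$ gives $d=(k-2)(k+1)-1=k^2-k-3$, the case ($\gamma k$).

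The only mildly delicate point is the classification of the globally generated irreducible homogeneous bundles with $c_1=1$ used when $n=2k+1$: it rests on the elementary representation-theoretic fact that a nontrivial irreducible polynomial representation of $SL(k)$ has dimension at least $k$, the standard representation and its dual being the only ones of dimension exactly $k$. Apart from this, the argument is just bookkeeping with \cref{conditions_en}, \cref{eq:Gcond1}, \cref{eq:Gcond2} and the identity $\wedge^{l-1}\scQ\cong\scQ^*(1)$, which gives $\rank\wedge^{l-1}\scQ=l$ and $c_1(\wedge^{l-1}\scQ)=l-1$ and underlies the expression for $c_1(\scF^\scQ)$ used above.
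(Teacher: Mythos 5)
Your proof is correct and follows essentially the same route as the paper's: bound $l\in\{k+1,k+2\}$ via $c_1(\scG)=k+2-l\geq 0$, rule out any $S_\lambda\scS^*$ component of $\scG$ because such a component would have $c_1\geq 2$ (using \cref{ass:3} to exclude $\scS^*$ itself), and conclude $\scG=\scO(1)$ or $\scG=0$ with the stated dimensions. The only difference is cosmetic: you case-split on $n$ first, while the paper excludes $\scF^\scS$-components first.
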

%[ proof
\begin{proof}
  From \cref{eq:Gcond2} and $l \ge k+1$, 
  we see $c_1(\scG)=k+2-l=0$ or $1$.
  From \cref{lem_remainS},
  %If there exists an irreducible component $\scE=S_\lambda \scS^* 
  %\subset\scF^\scS$, we have a contradiction
  %$1 \ge c_1(\scG)\ge c_1(\scE)=\frac{\abs{\lambda}}{k}\rank\scE\ge
  %\abs \lambda \ge 2$.
  %Hence there exists no such $\scE$, i.e.\ 
  one has $\scG= \scF^{\text{line}}$.
  Therefore we also obtain ($\gamma k$) if $c_1(\scG)=1$
  and ($\delta k$) if $c_1(\scG)=0$.
\end{proof}

%]
%]

Now we consider the case of $A+B\le 1$.
Getting the idea from \cite{kuchle},
we consider an invariant 
$\kappa_\scE:=k c_1(\scE)-\rank \scE$
for a vector bundle $\scE$ on $G(k,n)$.
This is an additive integral invariant, 
and takes a positive value
\begin{align}
  \label{eq:ke}
  \kappa_\scE=(\abs \lambda-1)\rank\scE
\end{align}
for any irreducible component $\scE=S_\lambda \scS^* \subset \scG$.
In particular, we have conditions $\kappa_\scE \le \kappa_\scG$ and 
\begin{align}
  \label{eq:Gcond3}
  \kappa_\scG&=k^2+d-A(kl-k-l)-B(kl+k-l),
\end{align}
from \cref{eq:Gcond1} and \cref{eq:Gcond2}.

\vspace{2mm}
In \cref{prop:Q,prop:A2B0},
we describe $\scF$ explicitly for \emph{all} $d >0$.
On the other hand,
we only show the finiteness of $\scF$
for \emph{a fixed} $d >0$ in the following lemmas.
Hence  we fix a positive integer $d >0$
in the rest of this section.

\begin{lem}
  \label{finiteness}
  For a fixed $k$,
  there are at most finitely many 
  $\scF$ with $A+B \leq 1$.
\end{lem}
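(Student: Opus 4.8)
The plan is to bound the possible irreducible components of $\scG = \scF^\scS \oplus \scF^{\mathrm{line}}$ using the additive invariant $\kappa$, exactly in the spirit of \cite{kuchle}. Fix $d>0$ and $k$, and consider the case $A+B\le 1$. From \cref{eq:Gcond3}, $\kappa_\scG = k^2 + d - A(kl-k-l) - B(kl+k-l)$. Since $l\ge k+1$ and $k\ge 2$, the coefficient $kl-k-l = (k-1)(l-1)-1 \ge 0$ and similarly $kl+k-l = (k-1)(l+1) \ge 0$, so in all three subcases $(A,B)\in\{(1,0),(0,1),(0,0)\}$ we get an upper bound $\kappa_\scG \le k^2 + d$ that depends only on $k$ and $d$ (not on $n$). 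This is the key finiteness input.

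The next step is to translate $\kappa_\scG \le k^2 + d$ into finiteness of the data. First I would handle $\scF^\scS$: by \cref{eq:ke}, any irreducible component $\scE = S_\lambda \scS^* \subset \scG$ satisfies $\kappa_\scE = (|\lambda|-1)\rank\scE \ge |\lambda|-1 \ge 1$, and $\rank \scE$ is a polynomial in $k$ alone (the Weyl dimension formula for $GL(k)$), so since $k$ is fixed there are only finitely many Young diagrams $\lambda$ with $\lambda_1 \ne \lambda_k \ge 0$ and $\kappa_{S_\lambda\scS^*} \le k^2+d$; the additivity $\sum_i \kappa_{\scE_i} \le \kappa_\scG$ then bounds the number of summands of $\scF^\scS$ too. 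For $\scF^{\mathrm{line}} = \bigoplus \scO(p_i)^{\oplus e_i}$ with $p_i > 0$, each summand $\scO(p)$ contributes $\kappa_{\scO(p)} = kp - 1 \ge k-1 \ge 1$, so again the total degree $\sum e_i p_i$ and the number of line-bundle summands are bounded by $\kappa_\scG + (\text{number of summands}) \le$ something depending only on $k,d$. Hence $\scG$, up to finitely many choices, is pinned down — in particular $\rank\scG$ and $c_1(\scG)$ take finitely many values.

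Finally I would recover finiteness of $n$ (and hence of $\scF^\scQ$, which in this case is $(\wedge^{l-1}\scQ)^{\oplus A}\oplus(\scQ(1))^{\oplus B}$ with $A+B\le1$). From \cref{eq:Gcond2}, $n = c_1(\scG) + A(l-1) + B(l+1)$ with $l = n-k$; if $(A,B)=(0,0)$ this reads $n = c_1(\scG)$ directly, and if $A+B=1$ it is a linear equation in $n$ with coefficient $\pm 1$ on the $A(l-1)$ or $B(l+1)$ term, so $n$ is determined by $c_1(\scG)$ and $k$. Since $c_1(\scG)$ and $k$ range over finite sets, $n$ does too, and then $\scF^\scQ$ is one of finitely many bundles. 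Combining, there are at most finitely many $(k,n,\scF)$ for fixed $k$ and $d$ with $A+B\le 1$, which is the assertion.

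The main obstacle is purely bookkeeping: making sure every irreducible component of $\scG$ really is forced to have $\kappa_\scE \ge 1$ (this uses \cref{threekinds_en}, i.e.\ $\lambda_1\ne\lambda_k$ forces $|\lambda|\ge 2$ hence $\kappa_{S_\lambda\scS^*} = (|\lambda|-1)\rank\scE \ge \rank\scE \ge 1$, and $p>0$ forces $\kappa_{\scO(p)} = kp-1 \ge 1$ since $k\ge 2$), so that additivity genuinely bounds the number of summands and not just the ``size'' of each. Everything else is a finite search once $\kappa_\scG$ is bounded in terms of $k$ and $d$; I expect the lemma's proof to be short, deferring the actual enumeration (and the dependence on $n$, which is where the explicit lists in later lemmas come from) to subsequent results.
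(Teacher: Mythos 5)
Your core argument is the paper's: bound the additive invariant of $\scG$ independently of $n$, note that every admissible irreducible summand contributes a positive integer amount, and conclude finiteness of the decomposition data for fixed $k$. (The paper uses the equality $c_1(\scG)=k\pm1$ when $A+B=1$ and $\kappa_\scG=k^2+d$ when $A=B=0$, treating $\scO(p)$ as $S_{(p,\dots,p)}\scS^*$; your uniform use of $\kappa_\scG\le k^2+d$ is an equivalent variant, and your check that $\kappa_\scE\ge 1$ for every summand — via \cref{ass:3} for $\scS^*$ and $k\ge 2$ for $\scO(p)$ — is exactly the point that makes the additivity bite.)

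One step fails as written, though: your recovery of $n$ from \cref{eq:Gcond2} in the case $A+B=1$. There the equation $c_1(\scG)=n-A(l-1)-B(l+1)$ with $l=n-k$ has the $n$'s cancel (e.g.\ for $(A,B)=(1,0)$ it reduces to $c_1(\scG)=k+1$), so it constrains $\scG$ but does \emph{not} determine $n$. The finiteness of $n$ should instead come from the rank condition \cref{eq:Gcond1}: $(k-A-B)\,l=\rank\scG+d$, and since $k-A-B\ge k-1\ge1$ and $\rank\scG$ is pinned down by the (finitely many) choices of decomposition, $l$ and hence $n$ are determined. With that substitution your argument is complete.
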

\begin{proof}
%We may assume $A+B\le 1$ since all the finite exceptions are already listed up.
Let $a_\lambda\ge 0$ be the multiplicity of the
irreducible component $S_\lambda \scS^* \subset \scG$
for $\lambda=(\lambda_1\ge \dots \ge \lambda_{k}\ge 0)$.
Note that $\lambda$ can be $(p,p,\dots ,p)$ with $p > 0$,
which corresponds to the line bundle $\scO(p) \subset \scF^{\mathrm{line}}$.
%We note that $l$ and hence $n$ is uniquely determined from $A, B$ and
%$(a_{\lambda})_{\lambda}$ 
%
%by the overdetermined linear equations \cref{eq:Gcond1} and \cref{eq:Gcond2}.
%
Since $k$ is fixed and $k-(A+B) \geq 2-1=1$,  $l$ and hence $n$ are uniquely determined from $A, B$ and
$(a_{\lambda})_{\lambda}$ by \cref{eq:Gcond1}.
Hence it suffices to show the finiteness of the choices of $(a_{\lambda})_{\lambda}$.

If $A+B=1$, it holds that
\begin{align}
  \label{cond_c1}
  \sum_{\lambda}a_\lambda c_1(S_\lambda \scS^*)= 
  c_1(\scG)=k\pm 1
\end{align}
from \cref{eq:Gcond2}.
The finiteness of the choices of $(a_\lambda)_{\lambda}$
follows from \cref{cond_c1} and
\begin{align}
c_1(S_\lambda \scS^*) =\frac{\abs \lambda}{k} \rank S_\lambda \scS^* \geq \frac{\abs \lambda}{k} >0.
\end{align}

If $A=B=0$,
\begin{align}
  \label{cond_kappa}
  \sum_{\lambda} a_\lambda \kappa_{S_\lambda \scS^*}
  = \kappa_\scG =k^2+d
\end{align}
holds from \cref{eq:Gcond3}.
The finiteness of the choices of $(a_\lambda)_{\lambda}$
also follows from
\begin{align}
\kappa_{S_\lambda \scS^*} = (\abs \lambda-1)\rank\scE \geq \abs \lambda-1 > 0.
\end{align}
\end{proof}

From \cref{finiteness},
it suffices to show the boundedness of $k$ 
for $A+B\le 1$.
Let us define
\begin{align}
  \label{varphi}
  \varphi_\scE(k):=\kappa_\scE-k^2+A(k^2-k-1)+B(k^2+k-1)
\end{align}
for
each $\scE=S_\lambda \scS^*\subset \scG$.

 From \cref{eq:Gcond3} and the assumption $l \ge k+1$ for $A+B>0$, we have
\begin{align}
 d &= \kappa_\scG - k^2 + A(kl-k-l) + B(kl+k-l)\\
   &\ge \kappa_\scE - k^2 + A(k^2-k-1) + B(k^2+k-1) = \varphi_\scE(k).
\end{align}
%By the condition 
%$\kappa_\scE \le \kappa_\scG$ with \cref{eq:Gcond3}
%and the assumption $l\ge k+1$ for $A+B>0$,
Thus we obtain a necessary condition
\begin{align}
  \label{Scond}
	\varphi_\scE(k) \le d
\end{align} 
for any irreducible component $\scE$ in $\scG$ for $A+B=0$ or $1$.

%[ lemma
\begin{lem}
  \label{lem:AB1}
  For a sufficiently large $k$,
  there is no $\scF$ with $A+B=1$.
\end{lem}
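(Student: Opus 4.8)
The plan is to treat the two possibilities $(A,B)=(0,1)$ and $(A,B)=(1,0)$ separately and, in each, to exhibit an explicit bound on $k$ in terms of $d$. The case $(A,B)=(0,1)$ is immediate: by \cref{varphi} any irreducible component $\scE$ of $\scG$ satisfies $\varphi_\scE(k)=\kappa_\scE+k-1$, and $\kappa_\scE\ge 0$ by \cref{eq:ke}, so \cref{Scond} forces $k\le d+1$ as soon as $\scG$ has a component; and if $\scG=0$ then $c_1(\scG)=0$, contradicting $c_1(\scG)=n-(l+1)=k-1\ge 1$ from \cref{eq:Gcond2} and \cref{ass:1}. Hence for $k>d+1$ no $\scF$ with $(A,B)=(0,1)$ exists.

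For $(A,B)=(1,0)$ I would argue through the total invariant $\kappa_\scG$ rather than component by component. Substituting $l\ge k+1$ into \cref{eq:Gcond3} gives $\kappa_\scG=k^2+d-(kl-k-l)\le d+k+1$, so $\kappa_\scG$ is only linear in $k$. On the other hand every irreducible component $\scE$ of $\scG$ has $\kappa_\scE\ge k-1$: a line bundle $\scO(p)\subset\scF^{\mathrm{line}}$ has $\kappa_\scE=kp-1\ge k-1$, while a component $S_\lambda\scS^*\subset\scF^\scS$ has $\abs{\lambda}\ge 2$ (the case $\abs{\lambda}=1$ being $\scS^*$, excluded by \cref{ass:3}) and $\rank S_\lambda\scS^*\ge k$ — the minimal dimension of a non-one-dimensional $GL_k$-representation — so $\kappa_\scE=(\abs{\lambda}-1)\rank S_\lambda\scS^*\ge k$ by \cref{eq:ke}. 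Comparing, $\scG$ has at most $(d+k+1)/(k-1)$ irreducible components, which is less than $2$ once $k>d+3$; since $c_1(\scG)=n-(l-1)=k+1>0$ forces $\scG\ne 0$, for such $k$ the bundle $\scG$ must be a single irreducible $\scE$ with $c_1(\scE)=k+1$. A short check closes the case: if $\scE=\scO(k+1)$ then $\kappa_\scE=k(k+1)-1$; if $\scE=S_\lambda\scS^*$ then $c_1(\scE)=k+1$ gives $\rank S_\lambda\scS^*=k(k+1)/\abs{\lambda}$, hence $\kappa_\scE=(\abs{\lambda}-1)k(k+1)/\abs{\lambda}\ge k(k+1)/2$; both contradict $\kappa_\scE\le d+k+1$ for $k$ large. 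Taking $k$ above the largest of the finitely many thresholds obtained above proves the lemma.

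The only non-formal ingredient is the uniform lower bound $\kappa_\scE\ge k-1$ for every admissible component of $\scG$; here \cref{ass:3} is used to discard $\scS^*$, the sole component with $\kappa=0$, and the dimension estimate for representations of $GL_k$ is invoked. The crux — and the step at which a naive attempt stalls — is the $(1,0)$ case: bounding each $\kappa_\scE$ individually through \cref{Scond} does not suffice, and one must instead bound the \emph{number} of irreducible components of $\scG$. The underlying reason is that $c_1(\scG)$ remains as small as $k+1$ while $\rank\scG$ grows like $k^2$, so the components would have to be numerous and "cheap" in $\kappa$, and it is this tension that finally yields the contradiction.
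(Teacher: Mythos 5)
Your proof is correct, and in the case $(A,B)=(1,0)$ it takes a genuinely different route from the paper's. The $(0,1)$ case and the treatment of $\scG=0$ agree with the paper up to cosmetics (the paper gets $k\le d$ from $\kappa_\scE>0$ rather than $\kappa_\scE\ge 0$, and rules out $\scG=0$ via the rank equation \cref{eq:Gcond1} instead of via $c_1$). For $(1,0)$ the paper argues componentwise: for a component $\scE=S_\lambda\scS^*$ of $\scF^\scS$, the bound $\kappa_\scE\le k+d+1$ from \cref{Scond} together with $\rank\scE\ge k$ forces $\abs{\lambda}=2$ once $k>d+1$, and then $\binom{k}{2}\le\kappa_\scE< 2k$ gives $k\le 4$; the residual case $\scG=\scF^{\mathrm{line}}$ is killed by $\rank\scF^{\mathrm{line}}\le c_1(\scF^{\mathrm{line}})$. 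You instead exploit additivity of $\kappa$ globally: the uniform lower bound $\kappa_\scE\ge k-1$ set against $\kappa_\scG\le d+k+1$ caps the number of irreducible components at one for $k>d+3$, and the single surviving component is excluded by playing $c_1(\scE)=k+1$ off against the same $\kappa$-bound. Your version is more uniform (no case split on whether $\scF^\scS$ vanishes), at the price of a slightly larger threshold; the paper's version yields the sharper explicit bound $k\le\max\{d+1,4\}$, which is what \cref{sec:CY3} actually invokes for $d=3$. One small caveat: your closing remark that bounding each $\kappa_\scE$ individually ``does not suffice'' is accurate only for the pure-line-bundle subcase (where indeed $\kappa_{\scO(1)}=k-1$ is too cheap and a global constraint is needed); for components of $\scF^\scS$ the componentwise bound is precisely how the paper closes the argument.
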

\begin{proof}
Note that we may assume $\scG\ne 0$. In fact,
if $\scG=0$, \cref{eq:Gcond1} gives the condition $l(k-1)=d$ and hence
$k \le \sqrt{d+1}$.

For $(A,B)=(0,1)$,
we have $0 < \kappa_\scE\le-k+d+1$ for any irreducible
$\scE \subset \scG$ from \cref{Scond}.
Hence $k\le d$ holds.

For $(A,B)=(1,0)$, 
we also have %$\kappa_{\scE} \le k+d+1$
\begin{align}\label{eq:k+d+1}
\kappa_{\scE} \le k+d+1
\end{align}
from \cref{Scond}.

Let us assume $\scF^\scS \ne 0$, first.
For $\scE \subset \scF^\scS$,
\begin{align}
  \abs \lambda \le 1+ \frac{k+d+1}{\rank \scE}
\end{align}
holds from \cref{eq:ke}.
If $k > d+1$,
this implies $\abs \lambda \leq 2$
since $\rank\scE \ge k$. % and $\abs \lambda \ge 2$, i.e.\ 
Since $\scE \neq \scS^*$,
we have $\abs \lambda  =2$ and hence $\scE=\wedge^2 \scS^*$ or $\Sym^2 \scS^*$. % Here we may assume $k>d+1$, and hence $\abs \lambda=2$.
From \cref{eq:k+d+1} and \cref{eq:ke}, we have
\begin{align}
  2k>k+d+1 \ge \kappa_\scE \ge \rank \scE \ge \binom{k}{2}
\end{align}
if $k > d+1$.
By solving it, we obtain 
$k\le 4$.
Thus we have $k \le \max \{d+1,4\}$ if $\scF^\scS \ne 0$.

In the remaining case
$\scG=\scF^{\text{line}}\ne 0$,
from $\rank\scF^{\text{line}}\le 
c_1(\scF^{\text{line}})$ with \cref{eq:Gcond1} and \cref{eq:Gcond2} one has
 \begin{align}
  0 &\ge \rank \scF^{\text{line}} - c_1(\scF^{\text{line}})\\
  & = (k-1)l -d - (n-l+1)\\
  & \ge (k-1)(k+1) -d - (k+1)\\
  & = k^2 -k - 2 -d.
 \end{align}
Hence it holds $k\le \frac{1}{2}\left( 1+\sqrt{4d+9} \right)$.
%by $\rank\scF^{\text{line}}\le 
%c_1(\scF^{\text{line}})$ with \cref{eq:Gcond1}, \cref{eq:Gcond2}
%and $l\ge k+1$.
\end{proof}

%]

Finally we consider the most intricate case 
of $(A,B)=(0,0)$, i.e.\ $\scF^\scQ=0$.
Contrary to \cref{prop:Q,prop:A2B0,lem:AB1},
we should allow $n=2k$ in this case 
(recall that \cref{ass:5} is the condition that $\scF^\scQ=0$ if $n=2k$).

%[ lemma 
\begin{lem}
  \label{lem:Smain}
  Assume $\scF^\scQ=0$. 
  For a sufficiently large $k$,
  any irreducible
  component $\scE \subset \scF^\scS$ is one of
 $\wedge^2 \scS^*$, $\Sym^2 \scS^*$, $\wedge^{k-1} \scS^*$, or $ \scS^*(1)$.
\end{lem}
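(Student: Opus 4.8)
The plan is to feed the bound of \cref{Scond} into elementary lower bounds for the ranks of Schur bundles. Let $\scE=S_\lambda\scS^*$ be an irreducible component of $\scF^\scS$; by \cref{ass:3} we have $\scE\neq\scS^*$, and \cref{Scond} with $A=B=0$ together with \cref{eq:ke} gives $(\abs\lambda-1)\rank\scE=\kappa_\scE\le k^2+d$. Write $\scE=S_\mu\scS^*\otimes\scO(p)$ with $\mu=(\mu_1\ge\dots\ge\mu_k=0)$ (so $\mu\neq 0$, since $\lambda_1\neq\lambda_k$) and $p\ge 0$; then $\rank\scE=\dim S_\mu\C^k$ and $\abs\lambda=\abs\mu+kp$, and the inequality becomes
\begin{align*}
  \bigl(\abs\mu+kp-1\bigr)\,\dim S_\mu\C^k\ \le\ k^2+d .
\end{align*}

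The engine is the Weyl dimension formula $\dim S_\mu\C^k=\prod_{1\le i<j\le k}\tfrac{\mu_i-\mu_j+j-i}{j-i}$, from which I would extract the following, valid for all $k$ larger than a bound depending only on $d$: (i) $\dim S_\mu\C^k\ge k$ for every $\mu\neq 0$, with equality precisely for $\mu=(1,0,\dots,0)$ and $\mu=(1,\dots,1,0)$; and (ii) for every $\mu\neq 0$ other than $(1,0,\dots,0)$, $(1,\dots,1,0)$, $(2,0,\dots,0)$, and $(1,1,0,\dots,0)$ one has $(\abs\mu-1)\dim S_\mu\C^k>k^2+d$. Statement (ii) is proved by a short case division on the shape of $\mu$: a single column $(1^j,0^{k-j})$ with $3\le j\le k-2$ gives $(\abs\mu-1)\dim S_\mu\C^k=(j-1)\binom kj\ge 2\binom k3$; the bundle $\Sym^m\scS^*=S_{(m,0,\dots,0)}\scS^*$ with $m\ge 3$ gives at least $2\binom{k+2}3$; and if $\mu_1\ge 2$ and $\mu$ has at least two nonzero rows, then — writing $a$ for the number of nonzero rows and using $\dim S_\mu\C^k\ge\binom ka\cdot\tfrac{k+1}{a+1}$, a consequence of the Weyl formula — one gets $\dim S_\mu\C^k\ge\binom{k+1}2$ whenever $2\le a\le k-2$, and the left side then exceeds $k^2+d$ since $\abs\mu\ge 3$. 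The leftover boundary case $a=k-1$ (where $\abs\mu$ is of order $k$ while $\dim S_\mu\C^k$ can be as small as order $k^2$) reduces to the classical classification of small-dimensional irreducible $GL_k$-representations; alternatively, the estimates needed here are already present in \cite{kuchle}.

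Granting (i) and (ii), the lemma follows at once from the displayed inequality. If $p\ge 1$, its left side is at least $k\,\dim S_\mu\C^k$, forcing $\dim S_\mu\C^k<k^2$; hence for $k$ large $\mu$ lies among the finitely many partitions with $\dim S_\mu\C^k<k^2$, and substituting each of them back into the inequality shows that only $\mu=(1,0,\dots,0)$ survives, whereupon the inequality becomes $pk^2\le k^2+d$ and forces $p=1$, i.e.\ $\scE=\scS^*(1)$. If $p=0$, the inequality reads $(\abs\mu-1)\dim S_\mu\C^k\le k^2+d$, so by (ii) $\mu\in\{(1,0,\dots,0),(1,\dots,1,0),(2,0,\dots,0),(1,1,0,\dots,0)\}$, and since $\scE\neq\scS^*$ this leaves $\scE\in\{\wedge^{k-1}\scS^*,\Sym^2\scS^*,\wedge^2\scS^*\}$. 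Hence $\scE$ is one of $\scS^*(1)$, $\wedge^{k-1}\scS^*$, $\Sym^2\scS^*$, $\wedge^2\scS^*$. The main obstacle is (ii): one has to make the rank estimates explicit and uniform enough to push every Schur bundle other than those four past the threshold $k^2+d$, and to keep track of how large $k$ must be — a bound depending on the fixed $d$, which is harmless since $d$ is fixed throughout.
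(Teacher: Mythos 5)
Your proposal is correct and follows essentially the same route as the paper: both rest on the inequality $\kappa_\scE=(\abs\lambda-1)\rank\scE\le k^2+d$ from \cref{Scond} and \cref{eq:ke}, and then rule out every shape other than the four exceptional ones by Weyl-dimension lower bounds that grow faster than $k^2$. The paper merely packages the case analysis differently, as the cases (S1)--(S7) on consecutive differences of $\lambda$ mirroring the proof of \cref{prop:Q}, whereas you split off the twist $p$ first; the boundary cases you defer (e.g.\ $a=k-1$) are handled by the same kind of rank estimate and pose no real difficulty.
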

\begin{proof}
	Let $\scE = S_\lambda \scS^*(p) \neq 
	\wedge^2 \scS^*$, $\Sym^2 \scS^*$, $\wedge^{k-1} \scS^*$ nor $ \scS^*(1)$.
	We show that
	each $\varphi_\scE(k)$ is bounded from below by a polynomial
	in $k$ with a positive coefficient in the highest degree,
	and hence, $k$ is bounded by the condition \cref{Scond}.
	We may take $\lambda=(2,1,\dots, 1)$ if $\scE = \scS^*(p')$ for some $p'$,
	and take $\lambda $ with $\lambda_{k}=0$ otherwise.

	If $\lambda \neq 
	(1,1,0,\dots,0)$, $(2,0,\dots,0)$, $(1,\dots,1,0)$, $(2,1,\dots, 1)$,
	$(2,\dots, 2, 0)$, $(1,\dots, 1,0,0)$, \\$(1,1,1,0,\dots, 0)$, $(1,\dots, 1,0,0,0)$,
	then $\rank \scE \geq k^2 -1$ holds
	by \cite[Table 1]{MR0267040}.
	From \cref{eq:ke}, \cref{varphi},  we have
		\begin{equation}
					\varphi_\scE(k)= (\abs{\lambda} +kp -1)\rank \scE - k^2 
	\end{equation}
	%from \cref{eq:ke}, \cref{varphi} and $\abs{\lambda}\ge 3$
	for $\scE = S_\lambda \scS^*(p)$.
	Hence for $p \geq 0$, $\varphi_\scE(k) $ is bounded from below by 
	\begin{equation}
					\varphi_{S_\lambda \scS^*} (k)= (\abs{\lambda}-1)\rank \scE - k^2  \ge k^2 -2
	\end{equation}
	since $\abs{\lambda}\ge 3$.
	%from \cref{eq:ke}, \cref{varphi} and $\abs{\lambda}\ge 3$.
	%This implies the boundedness of $k$ under the condition \cref{Scond}.
	In the case of
	$\lambda = (2,\dots, 2, 0)$, $(1,\dots, 1,0,0)$, 
	$(1,1,1,0,\dots, 0)$, $(1,\dots, 1,0,0,0)$, 
        $\varphi_\scE(k) $ is bounded from below by  $\varphi_{S_\lambda \scS^*} (k)$, which is equal to
	\begin{align}
					k^3-\frac{3}{2}k^2 -\frac{3}{2}k, \ \ 
					\frac{1}{2}k^3 - 3 k^2 + \frac{3}{2}k, \ \ 
					\frac{1}{3}k^3 - 2 k^2 + \frac{2}{3}k, \ \ 
					\frac{1}{6}k^4 - \frac{7}{6} k^3 + \frac{4}{3}k^2 - \frac{4}{3}k,
	\end{align}
	respectively.
	Finally, for $p \ge 1$ with $\lambda =
	(1,1,0,\dots,0)$, $(2,0,\dots,0)$, $(1,\dots,1,0)$, $(2,1,\dots, 1)$,
	$\varphi_\scE(k) $ is bounded from below by  $\varphi_{S_\lambda \scS^* (1)} (k)$, which is equal to
	\begin{align}
					\frac{1}{2}k^3 -k^2 -\frac{1}{2}k, \ \ 
					\frac{1}{2}k^3 + \frac{1}{2} k, \ \ 
					k^2-2k, \  \ 
					k^2,
	\end{align}
	respectively.
	This completes the proof.
\end{proof}
%]

%[ prop
%\begin{prop}[\cref{mainthm_en}]
%  \label{mainprop}
%Let $\scF$ be a homogeneous vector bundle on a Grassmannian $G(k,n)$
%that satisfies \cref{conditions_en}.
%There are finite choices of such $\scF$.
%\end{prop}
\begin{proof}[Proof of \cref{mainprop}]
By \cref{lem_F^Q=0},
it suffices to show the finiteness of $\scF$ which satisfies \cref{conditions_en} and 
\cref{ass:1}--\cref{ass:5}.
By \cref{prop:Q,prop:A2B0,finiteness,lem:AB1},
we see the finiteness of such $\scF$ with $\scF^\scQ \neq 0$.

By \cref{finiteness,lem:Smain},
it suffices to show 
that for a sufficiently large $k$, there is no $\scF$ such that
$\scF^\scQ=0$ and $\scF^\scS$ has the form
\begin{align}
  \label{eq:FS}
	\scF^\scS=\left( \wedge^2 \scS^* \right)^{\oplus \alpha}
	\oplus \left( \Sym^2 \scS^* \right)^{\oplus \beta}
	\oplus \left( \wedge^{k-1} \scS^* \right)^{\oplus \gamma}
	\oplus \left( \scS^*(1) \right)^{\oplus \delta},
\end{align}
where $\alpha, \beta, \gamma$ and $\delta$ are non-negative
integers.
In fact,
we show that there is no such $\scF^{\scS}$
if $k > \max\{3+ \sqrt{9+2d},d+2 \}$ as follows.

For such $\scF^{\scS}$,
we have 
  \begin{align}
  \label{eq:FSrank}
	\rank \scF^\scS&=\alpha \binom{k}{2}+\beta
	\binom{k+1}{2}+\gamma k +\delta k,\\
  \label{eq:FSc1}
	c_1(\scF^\scS)&=\alpha(k-1)+\beta(k+1)+\gamma(k-1)+\delta(k+1),\\
  \label{eq:FSkappa}
	\kappa_{\scF^\scS}&=\alpha \binom{k}{2}+\beta
	\binom{k+1}{2}+\gamma k(k-2) +\delta k^2.
  \end{align}
  By using the condition $\kappa_{\scF^\scS} \le \kappa_\scF=k^2+d$,
  we obtain
  \begin{equation}
\begin{split}
(\alpha&+\beta+2\gamma+2\delta-2)k^2+(-\alpha+\beta-4\gamma)k\\
&=	(\alpha+\beta+2\gamma+2\delta-2)k(k-2)+(\alpha+3\beta+4\delta-4)k
  \le 2d. \label{eq:k^2+d}
  \end{split}
    \end{equation}
 If $\alpha+\beta+2\gamma+2\delta > 2$,
 we have
 $k(k-2) -4k \leq 2d$, i.e.\ $k \leq 3+ \sqrt{9+2d}$.
%%%%%%%%%%%
%%%%%%%%%%%%
Thus 
it holds that $\alpha+\beta+2\gamma+2\delta\le 2$ if $ k > 3+ \sqrt{9+2d}$,
  and hence
$(\alpha,\beta,\gamma,\delta)$ is one of the following;
  \begin{align}\label{eq_alpha-delta}
  (2,0,0,0), (1,1,0,0),(0,2,0,0),(0,0,1,0),(0,0,0,1),(1,0,0,0),
  (0,1,0,0), (0,0,0,0).
  \end{align} 
%$  (2,0,0,0), (1,1,0,0),(0,2,0,0),(0,0,1,0),(0,0,0,1),(1,0,0,0), (0,1,0,0)$ and $(0,0,0,0)$.
  
Since $\scF=\scF^\scS \oplus \scF^{\text{line}}$, it holds that
  \begin{align}\label{eq_rank_c_1}
	\rank \scF^{\text{line}}&=kl-\rank \scF^\scS-d,\\
	c_1(\scF^{\text{line}})&=n-c_1(\scF^\scS).
  \end{align}
%Let us write $\scF^{\text{line}} = \bigoplus_{p \geq 1}
%\scO(p)^{\oplus a_p}$ with $a_p \geq 1$.
Furthermore, 
$\rank \scF^{\text{line}} \le c_1(\scF^{\text{line}})$
must hold.
Hence
\begin{align}
k^2 -2k -\rank \scF^\scS +c_1(\scF^\scS) \leq d
\end{align}
holds since $k^2 -2 k \leq kl-n $ by \cref{ass:2}.
Thus we have 
  \begin{align}\label{eq_2d}
	(2-\alpha-\beta)k^2+(-4+3\alpha+\beta)k
  +2(-\alpha+\beta-\gamma+\delta) \le 2d.
  \end{align}
  If $k > d+2$, we see that
  \cref{eq_2d} is satisfied only for %\cref{eq_2d} for each $(\alpha,\beta,\gamma,\delta) =(2,0,0,0),\dots,(0,0,0,0)$,
  %we see that the possible values are
  %$\alpha+\beta > 2$, and
  %$\alpha+\beta=2$ with $3\alpha+\beta \le 4$.
  %Together with the result in the previous paragraph,
  %we obtain two solutions
  $(\alpha, \beta,\gamma,\delta)=(1,1,0,0)$ or
  $(0,2,0,0)$ among \cref{eq_alpha-delta}. % for $k>\max\left\{d+2,3+\sqrt{9+2d}\right\}$ .
  
Finally, we use the invariant $\kappa$.
It holds that
\begin{align}\label{eq_kappa_line}
\kappa_{\scF^{\text{line}}}=\kappa_{\scF} -\kappa_{\scF^\scS} = k^2 +d  -\kappa_{\scF^\scS}  = \left\{ \begin{array}{ll}
    d & \text{if } (\alpha, \beta,\gamma,\delta)=(1,1,0,0) \\
    d-k & \text{if } (\alpha, \beta,\gamma,\delta)=(0,2,0,0).
  \end{array} \right. 
\end{align}
Since $\kappa_{\scO(p)}=pk-1 \geq k-1$ for $p > 0$,
we have $\kappa_{\scF^{\text{line}}}\geq k-1 $ if $\scF^{\text{line}}\neq 0$ and  $\kappa_{\scF^{\text{line}}} =0$ if $\scF^{\text{line}} =0$.
Hence \cref{eq_kappa_line} leads to a contradiction for $k > d+1$. 

  Therefore there exists no choice of $\scF$ for 
  a sufficiently large $k$. This completes the proof.
\end{proof}
%]

%%%%%%%%%%%%%%%%%%%%%
%%%%%%%%%%%%%%%%%%%%

%]

%]
%[ CY3
\section{Complete intersection Calabi--Yau \texorpdfstring{$3$}{}-folds}
\label{sec:CY3}
Let us apply our method to the classification of 
complete intersection Calabi--Yau $3$-folds and show \cref{mainthm_en}.

\begin{rmk}
\label{ss:natural}
In \cref{tb:cy3},
we omit some duplication originated from 
several kinds of natural identifications. 
%of Grassmannians.

There are the natural isomorphisms between $G(k,n)$ and $G(n-k,n)$,
$Z_{\scS^*}\subset G(k,n)$ and $G(k,n-1)$, $Z_{\scQ}\subset G(k,n)$
and $G(k-1, n-1)$, and
%These duplications are automatically omitted 
%by the assumption \cref{ass:2} and \cref{ass:3}, as we noted before.
%There is another isomorphism  
%$LG(k,2k+1)$ and $LG(k+1,2k+2)$,
%which is appeared as the isomorphism 
$Z_{\wedge^2 \scQ}
\subset G(k,2k)$ and $Z_{\wedge^2 \scQ} \subset G(k, 2k+1)$
discussed in \cref{sec:finite}.

We have one more identification,
which was not considered in \cref{sec:finite}.
%The final isomorphism is that of two vector bundles
On $LG(k,2k)\simeq Z_{\wedge^2 \scS^*} \subset G(k,2k)$,
$\scS^*|_{Z_{\wedge^2 \scS^*}}$ and $\scQ|_{Z_{\wedge^2 \scS^*}}$ are isomorphic.
Hence we identify
$ Z_{\wedge^2 \scS^* \oplus S_{\lambda} \scQ \oplus \scF'}$ with
$Z_{\wedge^2 \scS^* \oplus S_{\lambda} \scS^* \oplus \scF'}
\subset G(k,2k)$.
%From this isomorphism, we can omit examples with
%$\scF^\scQ\ne 0$ generated from
%\cref{row21,row31,row33}.
\end{rmk}

Let $\scF=\scF^\scS \oplus \scF^\scQ \oplus \scF^{\mathrm{line}}$ 
be a homogeneous vector bundle on a 
Grassmannian $G(k,n)$ that satisfies
\begin{align}
  \label{conds}
  \rank \scF =k(n-k)-3 \quad \text{and} \quad c_1(\scF)=n
\end{align}
and the assumptions \cref{ass:1}--\cref{ass:5}.

First we consider the case $\scF^\scQ \neq 0$.
If $\scF^\scQ\ne (\wedge^{l-1}\scQ)^{\oplus A}\oplus \left(\scQ(1)\right)^{\oplus B}$,
%From \cref{prop:Q}, % and \cref{prop:A2B0},
we obtain two Calabi--Yau $3$-folds,
($\alpha$2) and ($\beta$2) from \cref{prop:Q},
i.e.\ \cref{row29} and \ref{row5} in \cref{tb:cy3}, respectively.

Next, we consider the case of $\scF^\scQ=
(\wedge^{l-1}\scQ)^{\oplus A}\oplus \left(\scQ(1)\right)^{\oplus B}$.
From \cref{lem:AB}, the possible values are $(A,B)=(2,0),(1,0),(0,1)$ or
$(0,0)$. For $(A,B)=(2,0)$, \cref{prop:A2B0} gives
a Calabi--Yau $3$-fold ($\gamma$3) i.e.\ \cref{row24}.
For $A+B=1$, there is a bound $k \le 4$
as in the proof of \cref{lem:AB1}.
For each $k\le 4$, we use the other condition
\cref{cond_c1} and obtain seven Calabi--Yau $3$-folds,
\cref{row10,row11,row15,row16,row17,row18,row25}.

Let $\scF^\scQ=0$.
Suppose that $k\le 5$, first.
From %the condition $\kappa_\scE=(\abs \lambda -1) \rank \scE \le \kappa_\scF =k^2+3$ 
\begin{align}
(\abs \lambda -1)k \le (\abs \lambda -1) \rank \scE =\kappa_\scE \le \kappa_\scF =k^2+3
\end{align}
for an irreducible bundle $\scE=S_\lambda \scS^*
\subset \scF^\scS$, we only need to check the Young diagrams
with $\abs \lambda \le 4$ for $k=2$, $\abs \lambda \le 5$ 
for $k=3, 4$ and $\abs \lambda \le 6$ for $k=5$. We restrict 
the possible bundles further by the actual evaluation of
$\kappa_\scE \le k^2+3$.
For instance, 
if $k=2$, $\scF$ must be of the form
\begin{align}
  \scF=\scS^*(1)^{\oplus x_1}\oplus \left(\Sym^2 \scS^*
  \right)^{\oplus x_2} \oplus \scO(1)^{\oplus y_1}
  \oplus \scO(2)^{\oplus y_2}\oplus
  \scO(3)^{\oplus y_3} \oplus 
  \scO(4)^{\oplus y_4}.
\end{align}
The conditions \cref{conds} turn into
\begin{align}
  7&=\kappa_\scF=4x_1+3x_2+y_1+3y_2+5y_3+7y_4,\\
  4&\le n=3x_1+3x_2+y_1+2y_2+3y_3+4y_4.
\end{align}
By solving them, we get the remaining Calabi--Yau $3$-folds
among \cref{row1}--\cref{row18}.
The same argument works for each $k\le 5$ and
we obtain the remaining cases
among \cref{row19}--\cref{row33}.

Assume $\scF^{\scQ}=0$ and $k \geq 6$.
In this case,
$\scF^\scS$
must be of the form 
$\left( \wedge^2 \scS^* \right)^{\oplus \alpha}
\oplus \left( \Sym^2 \scS^* \right)^{\oplus \beta}
\oplus \left( \wedge^{k-1} \scS^* \right)^{\oplus \gamma}
\oplus \left( \scS^*(1) \right)^{\oplus \delta}$
for some $\alpha,\beta,\gamma$ and $\delta$,
by checking each bound of $\varphi_\scE(k)$ by the polynomial
in the proof of \cref{lem:Smain}.
By the proof of \cref{mainprop}, we have
\begin{align}
k \leq \max \{ 3+\sqrt{9+2d}  , d+2\} =\max \{  3+\sqrt{15} , 5 \} < 7,
\end{align}
and hence $k=6$.
%However we can check that it gives no further solution,
%which satisfies
%$\kappa_{\scF^S}\le 39 $ with \cref{eq:FSkappa}, $k\le n-k$, and $\rank \scF^{\text{line}} \le c_1(\scF^{\text{line}})$.
We note that \cref{eq:k^2+d}, \cref{eq_2d} and \cref{eq_kappa_line} hold 
even if $k \leq \max \{ 3+\sqrt{9+2d}, d+2\} $.
By inequalities \cref{eq:k^2+d}, \cref{eq_2d} for $d=3,k=6$,
we see that $(\alpha,\beta,\gamma,\delta)$ must be $(1,1,0,0)$.
Hence $\kappa_{\scF^{\text{line}}} = d=3$ by \cref{eq_kappa_line}.
However there is no such $ \scF^{\text{line}}$ since $ \kappa_{\scO(p)}=pk-1 \ge 5 > 3$ for $p >0$.
Thus there is no solution for $k \geq 6$.

%%%%%%%%%%%%%%%%%%
%%%%%%%%%%%%%%%%%

\vspace{2mm}
Finally,
we consider $\scF$ %the way to generate examples 
that does not satisfy \cref{ass:5},
i.e.\ $\scF^\scQ \ne 0$ with $n=2k$. Let $\scF$ be a classified
homogeneous vector bundle over $G(k,2k)$
which satisfies \cref{conds} and $\scF^\scQ=0$. 
As in \cref{eg_A5},
we can obtain another $\scF'$ on $G(k,2k)$ by
replacing one of $S_{\lambda} \scS^* \subset \scF$ with $S_{\lambda} \scQ$.
If $\scF^\scS$ is irreducible,
such $\scF'$ dose not give a new family,
since $Z_{S_{\lambda} \scS^* \oplus  \scF^{\mathrm{line}}}$ and $Z_{S_{\lambda} \scQ \oplus  \scF^{\mathrm{line}}}$ in $G(k,2k)$ are identified.
%because of the symmetry of $\scS^*$ and $\scQ$.
Otherwise, i.e.\ for \cref{row21,row31,row33},
$\scF^{\scS}$ contains $\wedge^2 \scS^*$ as an irreducible component.
Hence we do not obtain a new family in these cases as well
because of the last identification in \cref{ss:natural}.
Thus we do not have $\scF$ %the way to generate examples 
with $\scF^\scQ \ne 0$ and $n=2k$ in \cref{tb:cy3}.

\vspace{2mm}
The Hodge numbers of the Calabi--Yau $3$-folds can be calculated by 
the similar way as in
\cite[Section\,4.4]{kuchle}.
All examples except for \cref{row30} have $h^{0,0}(Z_{\scF})=1$, 
i.e.\ they are 
irreducible $3$-folds.
For \cref{row30}, we have $h^{0,0}(Z_{\scF})=2$, which is consistent
with the description in \cref{sec_description4}.
We also have $h^1(\scO_{Z_{\scF}})=0$ for all examples
except for \cref{row26}. 
This means that they are Calabi--Yau $3$-folds in 
the strict sense, while \cref{row26} is an abelian 3-fold
as we see in \cref{sec_description4}.

%%%%%%%%%%%%%%%%%%%%%
%%%%%%%%%%%%%%%%%

%]

%%%%%%%%%%%%%%%%

%[ ito
\section{Alternative description: 
%No.\!\! \ref{row4}, \ref{row5}, \ref{row7} and \ref{row10}}\label{sec_description}
\texorpdfstring{\cref{row4,row5,row7,row10}}{}}\label{sec_description}

In the rest of this paper,
we study alternative descriptions of some of the Calabi--Yau $3$-folds in Table \ref{tb:cy3}.
In this section,
we treat \cref{row4,row5,row7,row10}.\\
%which are the zero locus $Z_{\scE(1) \oplus \scF^{\text{line}}}$ for some $\scE$.\\

Let $X\subset  \P(V)$ be a projective variety and 
let $\cale $ be a globally generated locally free sheaf on $X$.
We consider a description of $Z_{\scE(1)} \subset X$.

Let $E \subset H^0(\cale)$ be a subspace which globally generates $\cale$.
We denote by $\calk$ the kernel of the natural surjection $ E\otimes \calo_X \arw \cale$.
We consider the following diagram:
\begin{equation}\label{diagram_pi_mu}
\begin{gathered}
\xymatrix{
	&  \P:=\P_{X} \left(\calk \oplus \calo_X(-1 ) \right) \ar[ld]_{\mu} \ar[rd]^{\pi} &   &\\
	\P(E \oplus V ) \ar@/_15pt/@{-->}[rrr]&	 & X \ar@{^(->}[r] & \P(V),\\
}
\end{gathered}
\vspace{4mm}
\end{equation}
where $\pi$ is the projective bundle induced by $\calk \oplus \calo(-1)$,
$\mu$ is the morphism induced by $\calk \oplus \calo(-1) \subset (E \oplus V) \otimes \calo_{X}$,
and $\P(E \oplus V) \dashrightarrow \P(V)$ is the natural projection.
Set 
\begin{align}\label{eq_def_sigma}
\Sigma =\mu(\P) \ \subset \ \P(E \oplus V ).
\end{align}
We note that
\begin{align}\label{eq_tot}
\P \setminus \P_X(\calk ) =\mathrm{Tot}_X (\calk \otimes \calo(1)),  \quad  \P(E \oplus V) \setminus \P(E) = \mathrm{Tot}_{\P(V)}(E \otimes \calo(1)),
\end{align}
where $\mathrm{Tot}$ stands for the total space of a vector bundle,
and $\mu|_{\P \setminus \P_X(\calk )}$ is induced by the embedding $\calk \otimes \calo(1) \hookrightarrow E \otimes \calo(1)$.
Hence $\mu$ is an embedding outside $ \P_X(\calk )$ and induces an isomorphism onto the image 
$\P \setminus \P_X(\calk ) =\mathrm{Tot}_X (\calk \otimes \calo(1)) \stackrel{\sim}{\longrightarrow}  \Sigma \setminus (\Sigma \cap \P(E)) $.
%Since $\calo_X(1)$ is very ample,
%$\mu : \P \rightarrow \Sigma$ is an isomorphism outside  $ \P(E ) = \P(E \oplus \{0\} ) \subset \P(E \oplus V)$.
%In particular, 
By this isomorphism,
we have a description
\begin{align}\label{eq_explicit_description}
 \Sigma \setminus (\Sigma \cap \P(E)) %=\left\{  [(u,v)] \, | \,  [v] \in X,  u \in \calk_{[v]}      \right\} 
 =\left\{  [(u,v)] \,\big| \,  [v] \in X,  u([v]) =0 \in \cale_{[v]}  \right\}  ,
\end{align}
where $ u \in   E, v \in V \setminus 0$, and $[v] \in \P(V)$ is the corresponding point.

\vspace{2mm}
Let $\bar{s}  $ be an element in $E \otimes V^*$.
%In other words, $\bar{s} $ is a global section of $E \otimes \calo(1)$ on $G(k,W)$.
Let $s \in H^0(\cale(1))$ be the image of $\bar{s}$ by the natural map $E \otimes V^* \arw H^0(\cale(1))$.
%For simplicity, we use the same letter $\bar{s} $ for the induced map $\calo(-1) \arw E \otimes \calo$.

Since $\bar{s} \in  E \otimes V^*$,
we have the corresponding linear map $V \arw  E$,
which we also denote by the same letter $\bar{s} $.
Hence we have a linear embedding
\begin{align}\label{qe:embedding}
\P(V) \hookrightarrow \P(E \oplus V)  \quad  :  \quad [v] \mapsto \big[(\bar{s}(v), v) \big] .
\end{align}
Let $P_{\bar{s}} \subset \P(E \oplus V )$ be the image of the embedding.
We note that $ P_{\bar{s}}  \cap \P(E) = \emptyset$.

Conversely,
if a linear subvariety $P \subset \P(E \oplus V )$ of dimension $\dim \P(V)$ satisfies $P \cap \P(E) =\emptyset$,
there exists  $\bar{s} \in E \otimes V^*$ such that $P=P_{\bar{s}} $.
Hence $P_{\bar{s}} \subset \P(E \oplus V)$ is general if so is $\bar{s}$.

\begin{prop}\label{prop_Z=Z}
Let $\bar{s}  $ be an element in $E \otimes V^*$,
and let $s$, $P_{\bar{s}}$ be as above.
Let $Z \subset X$ be the zero locus of the section $s \in H^0(\cale(1))$ 
and let $ Z' \subset \Sigma$ be the linear section of $\Sigma$ by $P_{\bar{s}}$.
Then $Z \subset \P(V)$ and $Z' \subset P_{\bar{s}}$ are projectively equivalent.
\end{prop}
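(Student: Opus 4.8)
The plan is to exhibit $Z'$ as the image of $Z$ under the projective--linear isomorphism
\[
\phi\colon \P(\wedge^{k} W)\ \xrightarrow{\ \sim\ }\ P_{\bar s},\qquad [p]\longmapsto [(\bar s(p),p)],
\]
i.e.\ the restriction of the linear embedding \eqref{qe:embedding}. Since $P_{\bar s}$ is a linear subvariety of $\P(H^{0}(\cale)\oplus\wedge^{k} W)$, once we know that $\phi(Z)=Z'$ as closed subschemes of $P_{\bar s}$ the two will be projectively equivalent by definition, and this is all that has to be proved. The first observation is that $P_{\bar s}\cap\P(H^{0}(\cale))=\emptyset$, because a point of the intersection would force $p=0$. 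Hence $Z'=\Sigma\cap P_{\bar s}$ lies in $\Sigma\setminus(\Sigma\cap\P(H^{0}(\cale)))$, over which $\mu$ is an isomorphism as recalled just before the proposition. So it suffices to describe the closed subscheme $\mu^{-1}(P_{\bar s})\subset\P$ and to check that it avoids $\P_{G(k,W)}(\calk\oplus\{0\})$; then $\mu$ identifies it with $Z'$.

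To describe $\mu^{-1}(P_{\bar s})$, recall from \eqref{eq_P} that $P_{\bar s}$ is cut out by the $h^{0}(\cale)$ linear forms $(v,p)\mapsto a(v-\bar s(p))$ with $a\in H^{0}(\cale)^{*}$, where $\bar s\colon\wedge^{k} W\to H^{0}(\cale)$ is the linear map attached to $\bar s$. Using $\mu^{*}\calo(1)=\calo_{\P}(1)$ and the tautological inclusion $\calo_{\P}(-1)\hookrightarrow\pi^{*}(\calk\oplus\calo(-1))$, the subscheme $\mu^{-1}(P_{\bar s})$ is the zero locus of the composition
\[
\calo_{\P}(-1)\ \hookrightarrow\ \pi^{*}(\calk\oplus\calo(-1))\ \hookrightarrow\ (H^{0}(\cale)\oplus\wedge^{k} W)\otimes\calo_{\P}\ \longrightarrow\ H^{0}(\cale)\otimes\calo_{\P},
\]
the last arrow being $(v,p)\mapsto v-\bar s(p)$. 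Now feed this into the defining sequence $0\to\calk\to H^{0}(\cale)\otimes\calo_{G(k,W)}\to\cale\to 0$: composing the displayed morphism with the surjection $H^{0}(\cale)\otimes\calo_{\P}\to\pi^{*}\cale$ kills the $\calk$--summand of $\pi^{*}(\calk\oplus\calo(-1))$ and sends the $\calo(-1)$--summand to $-s$, regarded as a morphism $\calo(-1)\to\cale$.

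Reading this fibrewise, a point $[(v,p)]\in\P(\calk_{x}\oplus\calo(-1)_{x})$ over $x\in G(k,W)$ lies in $\mu^{-1}(P_{\bar s})$ precisely when $v=\bar s(p)$ in $H^{0}(\cale)$. In particular $p\neq 0$ (otherwise $v=0$ as well), so $\mu^{-1}(P_{\bar s})$ is disjoint from $\P_{G(k,W)}(\calk\oplus\{0\})$ and maps isomorphically onto $Z'$ under $\mu$. Moreover $v=\bar s(p)$ forces $\bar s(p)\in\calk_{x}$, which, as $p$ spans the line $\calo(-1)_{x}$, means that $s$ vanishes at $x$, i.e.\ $x\in Z$; and over $Z$ the equation determines $v$, so $\pi$ restricts to an isomorphism $\mu^{-1}(P_{\bar s})\xrightarrow{\ \sim\ }Z$ which realises $\mu^{-1}(P_{\bar s})$ as the graph over $Z$ of the section of $\calk\otimes\calo(1)$ determined by $\bar s$. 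Writing a point of $Z$ as $x=[p_{x}]$ with $p_{x}$ a generator of $\calo(-1)_{x}\subset\wedge^{k} W$, the composite $Z\xrightarrow{\ \sim\ }\mu^{-1}(P_{\bar s})\xrightarrow{\ \mu\ }Z'$ sends $x$ to $[(\bar s(p_{x}),p_{x})]=\phi(x)$; hence $Z'=\phi(Z)$, as desired.

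I expect the one place that needs genuine care is the scheme--theoretic statement implicit above: one has to verify that the zero scheme of $\calo_{\P}(-1)\to H^{0}(\cale)\otimes\calo_{\P}$ is exactly the graph just described, equivalently that, after trivialising $\calo_{\P}(-1)$ on the open set $\P\setminus\P_{G(k,W)}(\calk\oplus\{0\})$, the $h^{0}(\cale)$ equations split --- via $0\to\calk\to H^{0}(\cale)\otimes\calo\to\cale\to 0$ --- into the pullback of the defining equations of $Z\subset G(k,W)$ together with the equations of a graph (the zero scheme of a tautological section over the vector bundle $\P\setminus\P_{G(k,W)}(\calk\oplus\{0\})\to G(k,W)$). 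This is a local computation on $\P_{G(k,W)}(\calk\oplus\calo(-1))$ using only that short exact sequence; the rest is formal. Finally, for general $\bar s$ the Bertini--type theorem quoted in the introduction makes $Z$ (hence $Z'$) smooth of dimension $k(n-k)-\rank\cale$, which matches $\dim\Sigma=k(n-k)+h^{0}(\cale)-\rank\cale$ and $\codim P_{\bar s}=h^{0}(\cale)$; so if only projective equivalence of the underlying varieties is wanted, the set--theoretic identification $Z'=\phi(Z)$ above already suffices.
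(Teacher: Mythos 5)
Your argument is correct and follows essentially the same route as the paper: both identify $\mu^{-1}(P_{\bar{s}})$ as a subscheme of $\P$ that $\pi$ maps isomorphically onto $Z$ and $\mu$ maps isomorphically onto $Z'$, the composite being the restriction of the linear embedding \cref{qe:embedding}. The only divergence is in how the isomorphism $\mu^{-1}(P_{\bar{s}})\xrightarrow{\ \sim\ }Z$ is obtained --- the paper invokes its degeneracy-locus \cref{lem_relative_grassmann} applied to $(\iota,\bar{s})^*\colon H^0(\cale)^*\otimes\calo\to\calk^*\oplus\calo(1)$, whereas you exhibit $\mu^{-1}(P_{\bar{s}})$ directly as the graph over $Z$ of the section of $\calk\otimes\calo(1)$ induced by $\bar{s}$, and the local splitting via $0\to\calk\to H^0(\cale)\otimes\calo\to\cale\to 0$ that you outline is exactly the verification needed to make that scheme-theoretic.
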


\begin{proof}
Consider the diagram
\begin{equation}\label{diagram_tilde}
\begin{gathered}
\xymatrix{
	&  \widetilde{\P}:=\P_{X} \left(E \otimes \calo_X  \oplus \calo_X(-1 ) \right) \ar[ld]_{\tilde{\mu}} \ar[rd]^{\tilde{\pi}} &  \\
	\P(E \oplus V )&	 & X ,\\
}
\end{gathered}
\end{equation}
and two vector bundles $ \calf_1:=\tilde{\pi}^* \cale \otimes \tilde{\mu}^* \calo(1), \calf_2:= E \otimes \tilde{\mu}^*\calo(1)$ on $ \widetilde{\P}$.
Let $c: E \otimes \calo_X \rightarrow \cale$ be the canonical morphism.
Then the composition of $\tilde{\mu}^*\calo(-1) \hookrightarrow \tilde{\pi}^* (E \otimes \calo_X  \oplus \calo_X(-1 ) ) $ and the pullback of $(c,0) : E \otimes \calo_X  \oplus \calo_X(-1 ) \rightarrow \cale$ by $\tilde{\pi}^*$
%\begin{align}
%\tilde{\mu}^* \calo(-1) \hookrightarrow \tilde{\pi}^*  (E \otimes \calo_X  \oplus \calo_X(-1 ) ) \stackrel{(c,0)}{\longrightarrow} \tilde{\pi}^* \cale
%\end{align}
defines a global section $s_1 \in H^0(\calf_1)$.
Since $ \ker (c,0) = \calk \oplus \calo_X(-1 )$,
the zero locus $Z(s_1) \subset  \widetilde{\P}$ is nothing but $\P=\P_{X} \left(\calk \oplus \calo_X(-1 ) \right)  $.
%which is embedded in $\widetilde{\P} $ by the natural inclusion $\calk \oplus \calo_X(-1 ) \subset E \otimes \calo_X  \oplus \calo_X(-1 )$.

On the other hand,
an element $\bar{s}  \in E \otimes V^*$ induces an exact sequence
\begin{align}\label{eq_s2}
0 \rightarrow \calo_X(-1) \stackrel{(\bar{s},\id)}{\longrightarrow} E \otimes \calo_X  \oplus \calo_X(-1 ) \stackrel{(-\id,\bar{s})}{\longrightarrow} E  \otimes \calo_X \rightarrow 0
\end{align}
on $X$.
Let $s_2 \in H^0(\calf_2) =\Hom(\tilde{\mu}^*\calo(-1), E  \otimes  \calo_{\widetilde{\P}})$ be the global section corresponding to
the composition of $\tilde{\mu}^*\calo(-1) \hookrightarrow \tilde{\pi}^* (E \otimes \calo_X  \oplus \calo_X(-1 ) ) $ and $(-\id,\bar{s}) $.
Then the zero locus $Z(s_2)$ coincides with $ \P_X(\calo_X(-1) )$, which is embedded in $\widetilde{\P} $
by $ (\bar{s},\id) : \calo_X(-1) \rightarrow E \otimes \calo_X  \oplus \calo_X(-1 )$.
Under the identification of $ Z(s_2)= \P_X(\calo_X(-1) )$ with $X$ by $\tilde{\pi} |_{Z(s_2)} $,
we have $\calf_1 |_{Z(s_2)} = \cale(1)$ and $s_1 |_{ Z(s_2)} = s \in H^0(\cale(1))$.
Hence 
\begin{align}\label{eq_pi11}
\tilde{\pi} |_{Z(s_1|_{Z(s_2)})} : Z(s_1 |_{ Z(s_2)} ) \rightarrow Z
\end{align}
is an isomorphism.

By \cref{qe:embedding}, \cref{eq_s2},
we also have $Z(s_2) =\tilde{\mu}^{-1} (P_{\bar{s}})$.
Thus $\tilde{\mu} $ induces
\begin{align}\label{eq_mu12}
\tilde{\mu}|_{Z(s_1) \cap Z(s_2) } :  Z(s_1) \cap Z(s_2)  = \P \cap Z(s_2)  \rightarrow \Sigma \cap P_{\bar{s}}=Z',
\end{align}
which is an isomorphism 
since $\mu=\tilde{\mu} |_{\widetilde{\P}} : \widetilde{\P} \rightarrow \Sigma $ is an isomorphism outside  $ \P(E )  $ and $ \P(E )   \cap P_{\bar{s}} = \emptyset$.
Since $ Z(s_1 |_{ Z(s_2)} ) = Z(s_1) \cap Z(s_2)  $,
two isomorphisms \cref{eq_pi11}, \cref{eq_mu12} induce an isomorphism $Z \rightarrow Z'$.
By construction,
this isomorphism is the restriction of $\P(V) \stackrel{\sim}{\rightarrow} P_{\bar{s}}  \subset \P(E \oplus V)   $ in \cref{qe:embedding}.
Thus $Z \subset \P(V)$ and $Z' \subset P_{\bar{s}}$ are projectively equivalent.
\end{proof}

%%%%%%%%%%%%%%%%%%%%%%%%%%%%%%%%%%%
%%%%%%%%%%%%%%%%%%%%%%%%%%%%%%

In the following subsections,
we consider the case when $X \subset \P(V)$ is a Grassmannian with the Pl\"{u}cker embedding and $E=H^0(\cale )$.
In that case,
$H^0(\cale ) \otimes V^* \rightarrow H^0(\cale(1))$ is surjective for a globally generated irreducible homogeneous $\cale$ on $X$.
Hence the section $s \in H^0(\cale(1))$ in \cref{prop_Z=Z} is general if so is $\bar{s}$.

\subsection{\texorpdfstring{\cref{row4,row7}}{}}
%\subsection{No.\,\ref{row4} and \ref{row7}}

In this subsection,
we consider the case $\cale=\scS^*$ on $G(2,W)$ for $5 \le \dim W \le 8$.
In this case,
$E=H^0(\cale)=W^*, \calk =\scQ^*$,
and  $\Sigma$ 
is contained in $ \P(W^* \oplus \wedge^2 W) $.
 We show below that in this case $\Sigma$ (defined in \cref{eq_def_sigma}) can be identified 
 with a certain Schubert variety in a homogeneous space of a simple Lie group $G$
 of type $E_n$, $5 \le n \le 8$ (with the standard convention $E_5 = D_5$).
 We fix a Borel subgroup $B\subset G$.

In Dynkin diagrams of type $D$ and $E$,				
we use the numeration $1,2,\dots, n-1$ for the nodes 
in the upper row and $n$ for the unique lower node
and denote by $P_i$
the maximal parabolic subgroup corresponding to the node $i$.

For instance, 
in the following Dynkin diagram,
the crossed node indicates the corresponding maximal parabolic subgroup $P_{n-1}\subset E_n$:
\begin{align}
\Fdyn
\end{align}

\begin{lem}\label{prop_Tits}
Suppose that $\dim W =5,6,7$ or $8$.
%(1) If $\dim W=5$,
The variety $\Sigma \subset \P(W^* \oplus \wedge^2 W)$ is 
projectively equivalent to a 
Schubert variety of a generalized Grassmannian $G/P=D_5/P_4, E_6/P_5, E_7/P_6$ or  $E_8/P_7
\subset \P \left(H^0(\scO_{G/P}(1))\right)$, respectively.

%$D_5/P_5OG(5,10)$,
%$E_6/P_6=\mathbb{O}\P^2$,
%E_7/P_7$ and $E_8/P_8$, respectively.\\
%(2) If $\dim W=6$,
%$\Sigma \subset \P(W^* \oplus \wedge^2 W) =\P^{20}$ is projectively equivalent to a Schubert variety in the Cayley plane $\mathbb{O}\P^2$ of dimension $12$ and degree $33$.
\end{lem}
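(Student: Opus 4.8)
The plan is to identify $\Sigma$ with a Schubert variety in three moves: describe $\Sigma$ explicitly as a set, realise its ambient space $\P(W^*\oplus\wedge^2W)$ as a coordinate linear subspace of the minimal homogeneous embedding of the relevant generalised Grassmannian, and check that under this realisation $\Sigma$ becomes the Schubert variety cut out by that subspace.

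First I would make $\Sigma$ explicit. Since $\calk=\scQ^*$, a point of $\P_{G(2,W)}(\scQ^*\oplus\calo(-1))$ over $[U]\in G(2,W)$ lying in a line $\ell\subseteq U^\perp\oplus\wedge^2 U$ is sent by $\mu$ to the corresponding line in $W^*\oplus\wedge^2 W$; hence $\Sigma=\bigcup_{[U]\in G(2,W)}\P(U^\perp\oplus\wedge^2 U)$, a family of $(\dim W-2)$-planes over $G(2,W)$ all meeting along the fixed $\P(W^*)$. Equivalently, $\Sigma$ is the locus of $[(\phi,\omega)]$ with $\omega\in\wedge^2 W$ decomposable (possibly $0$) and $\phi\in U^\perp$, where $U=\langle\omega\rangle$ is the support of $\omega$. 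As $\mu$ restricts to an isomorphism off the divisorial subbundle $\P_{G(2,W)}(\scQ^*\oplus\{0\})$, it is birational onto $\Sigma$, so $\Sigma$ is irreducible of dimension $\dim\P_{G(2,W)}(\scQ^*\oplus\calo(-1))=3(\dim W-2)$.

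Next I would set up the ambient geometry. Write $n=\dim W\in\{5,6,7,8\}$ and let $G$ be $\mathrm{Spin}(10),\,E_6,\,E_7,\,E_8$ respectively, with $G/P=D_5/P_5,\,E_6/P_6,\,E_7/P_7,\,E_8/P_8$ in its minimal embedding $G/P\subset\P(V)$, $V=H^0(\calo_{G/P}(1))$ of dimension $16,27,56,248$. The key input is that $G$ contains a copy of $GL(W)$ — the Levi factor of the maximal parabolic obtained by deleting a node of the Dynkin diagram of $G$ whose removal leaves a type $A_{n-1}$ subdiagram — under which $V$ decomposes, up to twists of the summands by powers of $\det W$, as $V|_{GL(W)}=(W^*\oplus\wedge^2 W)\oplus V''$, with $W^*\oplus\wedge^2 W$ the span of the weight vectors of an order ideal (a $B$-submodule, $B\subset G$ a Borel extending one of $GL(W)$) in the weight poset of $V$; equivalently $\P(W^*\oplus\wedge^2 W)=\P(V_{\le w})$ for a minimal coset representative $w\in W^{G/P}$. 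Granting this, $X_w:=(G/P)\cap\P(W^*\oplus\wedge^2 W)$ is the Schubert variety attached to $w$, with $\langle X_w\rangle=\P(W^*\oplus\wedge^2 W)$. The case $n=5$ is elementary: $V=\bigoplus_{i\ \mathrm{even}}\wedge^i W=\wedge^0 W\oplus\wedge^2 W\oplus\wedge^4 W$ is a half-spinor representation of $\mathrm{Spin}(W\oplus W^*)$ (split quadratic form), $W^*\oplus\wedge^2 W$ is the hyperplane $\wedge^2 W\oplus\wedge^4 W$ of spinors with vanishing $\wedge^0$-component, and $X_w$ is the Schubert divisor of $OG(5,10)$; for $n=6,7,8$ one uses the Freudenthal–Jordan models (the exceptional Jordan algebra $J_3(\mathbb O)$ for $E_6$, a Freudenthal triple system for $E_7$, the minimal nilpotent orbit in the adjoint representation for $E_8$), in which $GL(W)$ and the decomposition above are written down explicitly.

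It then remains to prove $\Sigma=X_w$. I would first check $\Sigma\subseteq G/P$ inside $\P(V)$: for $[(\phi,\omega)]\in\Sigma$, using that $\omega$ is decomposable and $\phi$ annihilates its support, one identifies the corresponding point of $\P(V)$ with a point of $G/P$ — a pure spinor when $n=5$, a rank-$\le1$ element of the Jordan (resp.\ Freudenthal) system when $n=6,7$, a minimal nilpotent when $n=8$; since also $\Sigma\subseteq\P(W^*\oplus\wedge^2 W)$ by construction, $\Sigma\subseteq X_w$. As $\Sigma$ and $X_w$ are both irreducible it suffices to match dimensions, and $\dim X_w=3(\dim W-2)=\dim\Sigma$ is read off from the combinatorics of the order ideal $\{\le w\}$ (equivalently, from the known lists of Schubert classes of these four homogeneous spaces); hence $\Sigma=X_w$, and the linear embedding $\P(W^*\oplus\wedge^2 W)\hookrightarrow\P(V)$ is the required projective equivalence. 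The main obstacle is the middle step: producing the explicit $GL(W)\subset G$ with the asserted branching of $V$, and checking that $W^*\oplus\wedge^2 W$ is a $B$-stable order-ideal subspace (so that its linear section of $G/P$ is genuinely a Schubert variety, not merely some linear section), together with the verification $\Sigma\subseteq G/P$ in the three exceptional cases, which needs a workable model of $E_6/P_6$, $E_7/P_7$ and $E_8/P_8$. Alternatively one can bypass the group theory and argue via equations: $\Sigma$ is cut out in $\P(W^*\oplus\wedge^2 W)$ by quadrics (Plücker-type relations on $\omega$ together with the bilinear conditions coupling $\phi$ and $\omega$), and one matches these with the quadrics defining $OG(5,10),\mathbb O\P^2,E_7/P_7,E_8/P_8$ in their minimal embeddings. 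Either way, once the ambient space is identified the dimension bookkeeping at the end is routine.
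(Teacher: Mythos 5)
Your proposal takes a genuinely different route from the paper's. You propose to (i) describe $\Sigma$ set-theoretically as $\bigcup_{[U]\in G(2,W)}\P(U^\perp\oplus\wedge^2 U)$, (ii) exhibit $W^*\oplus\wedge^2 W$ as a $B$-stable (Demazure) subspace of the minimal representation $V$ of $G$ via the branching of $V$ under a Levi $GL(W)\subset G$, and (iii) prove $\Sigma=G/P\cap\P(W^*\oplus\wedge^2 W)$ by verifying $\Sigma\subseteq G/P$ in explicit models and comparing dimensions. The paper instead recognizes the whole correspondence \cref{diagram_pi_mu} as the restriction to $G(2,n)\subset G/P_2$ of the incidence diagram $G/P_2\leftarrow G/(P_2\cap P_n)\rightarrow G/P_n$: a branching rule for parabolic subgroups shows that the homogeneous bundle $\calg$ on $G/P_2$ attached to $V_{\varpi_n}$ restricts to $\scQ^*\oplus\calo(-1)$ on $G(2,n)$, so $\Sigma$ is literally the Tits transform $\calt(G(2,n))$ of the smooth Schubert variety $G(2,n)\subset G/P_2$, and Tits transforms of Schubert varieties are Schubert varieties by \cite[Lemma 2.4]{coskun}. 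That single bundle identification plus one citation replaces your steps (ii) and (iii) and treats all four cases uniformly, without ever touching the Jordan, Freudenthal, or adjoint models.

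The remaining issue is that your outline defers precisely the parts that carry the content of the lemma, as you yourself flag. Concretely: (a) the inclusion $\Sigma\subseteq G/P$ for $E_6,E_7,E_8$ requires explicit defining equations of these varieties in their minimal embeddings and is not routine, especially for $E_8/P_8$, which is not minuscule; (b) even granting that $W^*\oplus\wedge^2 W$ is a $B$-stable order-ideal subspace $V_w$, the identity $X_w=G/P\cap\P(V_w)$ \emph{as varieties} needs the linear-definedness of Schubert varieties in the minimal embedding, a nontrivial input you do not name; and (c) $\dim X_w=\ell(w)$, not $\dim V_w-1$, so "reading off the dimension from the order ideal" amounts to identifying $w$ and computing its length, which is essentially the Weyl-group computation the paper records separately in \cref{rmk_galkin}. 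All of these gaps are fillable and the $n=5$ case you work out is correct, but as written the argument for the three exceptional cases is a plan rather than a proof.
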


\begin{proof}
We recall the Tits transform.
Consider a diagram
\begin{equation}\label{diagram_Tits}
\begin{gathered}
\xymatrix{
 & G/P_{i,j}  \ar[ld]_{\tilde{\mu}} 
	\ar[rd]^{\tilde{\pi}} &  \\
        G/ P_j &	 & G/ P_i\\
}
\end{gathered}
\end{equation}
where $i \ne j$ and $P_{i,j} = P_i \cap P_j$.
For a subset $X \subset G/P_i$,
the \emph{Tits transform} of $X$ is defined by $\calt (X):=\tilde{\mu} 
(\tilde{\pi}^{-1}(X)) \subset G/P_j$,
(see \cite{LMbook} for detail).
In the following, we show $\scT(X)\simeq \Sigma \subset G/P_{n-1}$
if we take $i=2, j=n-1$ and $X = G(2,n) \subset G/P_2$.

Let $\Delta= \left\{\alpha_1, \dots, \alpha_n\right\}$ be the set of all simple roots of $G$,
where $\alpha_k$ corresponds to the $k$-th node of the Dynkin diagram.
For the above $i$, 
the subset $\Delta' = \Delta \setminus \left\{ \alpha_i \right\} \subset \Delta$ defines
a semi-simple Lie subgroup $G'\subset G$. 
From the discussion of \cite[Section 2.7.1]{LMbook},
the homogeneously embedded homogeneous submanifold
$G'/(P_j \cap G')\subset G/P_j$
is a smooth Schubert variety of $G/P_j$, which coincides with the
Tits transform $\scT (o)$ of the Borel fixed point $o:=P_i/P_i \in G/P_i$.
%Note that the Lie algebra $\frakp_i=\Lie P_i$ satisfies
%$\Delta'= \Delta_{\frakp_i}:= \left\{\alpha \in \Delta \mid \frakg_{-\alpha} 
%\subset \frakp_i\right\}$.

Applying this to $i=n$, $j=2$,
we obtain the right side of the following diagram \cref{diagram_E6}.
%for $G=D_5,E_6, E_7$ and $E_8$ and $G'=A_{n-1}$, 
%where $n=\rank G$.
The two crossed nodes
correspond to
the two maximal parabolic subgroups $P_i = P_n$ and $P_j = P_2$, and
the encircled nodes
correspond to the Lie subgroup $G' =A_{n-1}$.
The crossed Dynkin diagrams
and the encircled crossed Dynkin diagram represent
the corresponding 
flag manifolds and 
homogeneously embedded homogeneous submanifold $G(2,n) 
= \scT (o)$, respectively.

The left side of the diagram  \cref{diagram_E6} 
represents another Tits transform of the submanifold $G(2,n)$ for $i=2,j=n-1$. %$P_i=P_{2}, P_j=P_{n-1}$.
\begin{equation}\label{diagram_E6}
\begin{gathered}
\xymatrix{
  & %G/P_{2,n}=
  \Fsix 
  \ar[ld]_{\tilde\mu} \ar[rd]^{\tilde\pi} 
  &	& %G/P_{2,4}=
  \Ffive 
	\ar[ld]_{\tilde\mu'} \ar[rd]^{\tilde\pi'} &  \\
%G/P_n %=
  \Ffour 
  & \tilde{\pi}^{-1}(G(2,n))%\P_{G(2,n)}\left(\calg |_{G(2,n)}\right)
\ar[ld] \ar@{}[u]|{\bigcup} \ar[rd]
& %G/P_2 =
\Fone 
& 
G(2,n)\ar@{=}[ld] \ar@{}[u]|{\bigcup} \ar[rd]
& \Fthree 
%G/P_4 
\\
\scT\left( G(2,n) \right)
%\Sigma 
\ar@{}[u]|{\bigcup} & &
\Ftwo\ar@{}[u]|{\bigcup} & & o 
\ar@{}[u]|{\rotatebox{90}{$\in$}}\\
}
\end{gathered}
\end{equation}

Let $\varpi_i$ be the $i$-th fundamental weight of $G$ and $V^P_{\varpi_i}$
the irreducible representation with the highest weight $\varpi_i$ 
of a parabolic subgroup $P$. 
Note that an integral dominant weight for $G$ 
also defines a highest weight representation of a parabolic subgroup $P$
induced from that of the Levi group $L$ via the canonical morphism 
$P\rightarrow L$.
%, which actually gives 
%any irreducible represetation of $P$ 
%[reference].}
%since the unipotent part of $P$ acts trivially on an irreducible representation.
%\gray{We writeこの文削除？ $P_{2,i}=P_2 \cap P_i$ for $i = n-1$ and $n$.}
Let $\calg$ and $\calg'$ be the homogeneous vector bundles 
corresponding to the fundamental representations
$V_{\varpi_{n-1}}^{P_2}$ and
$V_{\varpi_{n-1}}^{P_{2,n}}$
on $G/P_2$ and $G/P_{2,n}$,
%where $P_{2,n} =P_2 \cap P_n$, 
respectively, i.e.\ $H^0(\calg^*)\simeq H^0(\calg'^*)$ are isomorphic to the
fundamental representation $V_{\varpi_{n-1}}$ of $G$.
%\gray{$V_{\varpi_{n-1}}^{P}$ of $P=P_2$ and $P_{2,n} =P_2 \cap P_n$, 
%respectively. }
We have an injection $\calg'\subset \tilde\mu'^* \calg$ on $G/P_{2,n}$ induced by
the injection $V^{P_{2,n}}_{\varpi_{n-1}} \subset V^{P_{2}}_{\varpi_{n-1}}$
as representations of $P_{2,n}$.
Note that the sets of weights of $V_{\varpi_{n-1}}^{P_{2,n}} 
\subset V_{\varpi_{n-1}}^{P_2}$ can be written down as follows,
by subtracting roots of $P_{2,n}$ or $P_2$ from the highest weight $\varpi_{n-1}$,
\begin{align}
\begin{split}
&\left\{\varpi_{n-1}, \varpi_{n-2}-\varpi_{n-1}, \dots, \varpi_3 -\varpi_4,
\varpi_2+\varpi_n - \varpi_3 \right\}\\ & \hspace{10mm} \subset
\left\{\varpi_{n-1}, \varpi_{n-2}-\varpi_{n-1}, \dots, \varpi_3 -\varpi_4,
\varpi_2+\varpi_n - \varpi_3 , \varpi_2 - \varpi_n\right\}.
\end{split}
\end{align}
By computing the determinants of these vector bundles, 
i.e.\ the sum of all weights of these representations, 
and restricting them to  $\tilde\pi'^{-1}(o)=G(2,n)$,
we obtain an exact sequence on $G(2,n)$, 
\begin{align}
		0 \rightarrow \calg'|_{G(2,n)} \rightarrow \tilde\mu'^* \calg|_{G(2,n)} \rightarrow 
		\scO(-1) \rightarrow 0.
\end{align}

Since 
$V_{\varpi_{n-1}}^{P_{2,n}}$ 
coincides with
%includes, and hence equals to 
the irreducible representation $V_{\varpi_{n-1}}^{P_2'}$ 
of the parabolic subgroup 
$P_2':= P_2 \cap G'$ of $G'=A_{n-1}$
as $P_2'$-modules, 
we identify the vector bundle $\calg'|_{G(2,n)}$ with $\scQ^*$.
We also have $\Ext^1(\scO(-1),\scQ^*) = H^1(\scQ^*(1)) =0$
by the Bott--Borel--Weil theorem.
Thus the restriction $\calg |_{G(2,n)}$ is isomorphic to 
the direct sum $\scQ^* \oplus \calo(-1)$ on $G(2,n)\subset G/P_2$.

In the diagram \cref{diagram_E6},
$\tilde{\pi} : G/P_{2,n-1}
\rightarrow G/P_2$ 
is nothing but the
$\P^{n-2}$-bundle $\P_{G/P_2}(\calg) \rightarrow G/P_2$,
and $\tilde{\mu}$ is the morphism induced by 
the tautological line bundle of $\P_{G/P_2}(\calg)$.
Hence we have $\tilde{\pi}^{-1}(G(2,n)) =\P_{G(2,n)}\left(\calg |_{G(2,n)}\right)
= \P_{G(2,n)}\left( \scQ^* \oplus
\calo(-1) \right)$, and 
the diagram
\begin{equation}\label{diagram_tits_sub}
\begin{gathered}
\xymatrix{
%G/P_n %=
  & \tilde{\pi}^{-1}(G(2,n) ) = \P_{G(2,n)}\left( \scQ^* \oplus
\calo(-1) \right) %\P_{G(2,n)}\left(\calg |_{G(2,n)}\right)
\ar[ld]  \ar[rd]
& %G/P_2 =
%\Fone 
%G/P_4 
\\
\scT\left( G(2,n) \right)
%\Sigma 
 & &
\Ftwo =G(2,n)
\\
}
\end{gathered}
\end{equation}
in \cref{diagram_E6} coincides 
with \cref{diagram_pi_mu} for $X=G(2,W),\dim W=n, 
\cale=\scS^*,E=W^*$ and $\calk=\scQ^*$. 
The left morphisms in both \cref{diagram_pi_mu} and \cref{diagram_tits_sub}
are the morphisms induced by the tautological line bundles of the right ones respectively.
Thus it turns out that $\Sigma$ is
the Tits transform
$\calt(G(2,n))$ of a Schubert variety $G(2,n) \subset G/P_2$ in the
diagram \cref{diagram_E6}.
By \cite[Lemma 2.4]{coskun},
a Tits transform of a Schubert variety is also a Schubert variety in general.
Thus $\Sigma$ is a Schubert variety of $G/P_{n-1}$.
%For a set $I$ of positive roots of $G$,
%we denote by $P_I \subset G$ the corresponding parabolic subgroup.
%To describe $\Sigma$ for each $n=\dim W$,
%let $G=??$ and $I=?, J=?$.
%In this case, $G /P_I=OG(5,10)$
%and $G(2,5)$ is embedded into $G/P_J$ as a schubert variety (corresponding to ??).
%Hence the restriction of the diagram \cref{diagram_Tits}
%\begin{equation}
%\begin{gathered}
%\xymatrix{
%	& \tilde{\pi}^{-1}(G(2,5))  \ar[ld]_{\tilde{\mu}} \ar[rd]^{\tilde{\pi}} &  \\
%\calt(G(2,5)) &	 & G(2,5) .\\
%}
%\end{gathered}
%\end{equation}
%Hence $\Sigma$ is nothing but the Tits transform $\calt(G(2,5))$.
\end{proof}

\begin{rmk}\label{rmk_galkin}
By \cite[Lemma 2.4]{coskun},
we can compute the Tits transform $\calt(G(2,n)) \subset G/P_{n-1}$
explicitly.
For each $5\le n\le 8$, 
the list of reduced expressions of the corresponding elements
in the Weyl groups $\bfW_G$ is given in the following table, where 
we abbreviate a reduced expression $w=s_{i_1}\cdots s_{i_m} \in \bfW_G$ to $i_1 \cdots i_m$
for simple reflections $s_1, \dots, s_n  \in \bfW_G$.
We denote by $\bfW_{\text{min}}^{P_j}\subset \bfW_G$  the set of minimal length representatives of the cosets  $\bfW_G/\bfW_{P_j}$  for $j=2$ or $n-1$.

\def\arraystretch{1.3}
\begin{table}[H]
\centering
\begin{align*}
\begin{array}{r|c|c}
				&w\in \bfW_{\text{min}}^{P_{n-1}} \text{ with }
				\Sigma=\overline{Bw^{-1}P_{n-1}/P_{n-1}}&
  w\in \bfW_{\text{min}}^{P_2} \text{ with } G(2,n)=\overline{Bw^{-1}P_2/P_2}\\
  \hline
  G=D_5 & 432153243 & 213243 \\
  E_6 & 543216324354 & 21324354 \\
  E_7 & 654321732435465 & 2132435465 \\
  E_8 & 765432183243546576& 213243546576\\
\end{array}
\end{align*}
\end{table}
From \cref{prop_Z=Z}, \cref{prop_Tits} and \cref{rmk_galkin}, 
we get the descriptions of \cref{row4,row7} in \cref{tb:cy3} as follows.

\begin{prop}
The zero locus of a general section of $\scS^*(1)\oplus \scO(2)$ on $G(2,5)$ is 
isomorphic to the complete intersection of general six hyperplanes and one quadric in $OG(5,10)$.
\end{prop}

\begin{proof}
 Let $n=5$. By \cref{prop_Tits} and \cref{rmk_galkin}, $\Sigma$ is the Schubert divisor of $OG(5,10)$,
\[
\{  U \in OG(5,10) \, | \, U \cap U_0 \neq \emptyset \},
\]
where $U_0$ is a fixed
subspace
corresponding to the other irreducible component of the Grassmannian
of $5$-dimensional isotropic subspaces,
as shown by \cite[Section 16]{CCGK} in a different way.
Especially, 
$\Sigma$ is a special hyperplane section of $OG(5,10)$.
%\textcolor{black}{That is, $\Sigma$ can be described as 
%\[
%				\left\{ U \in OG(5,10) \mid U \cap U_0 \ne 0 \right\}
%\]
%where $U_0$ is a fixed isotropic space of dimension 5.}
% with sigularities of codimension 5.
By \cref{prop_Z=Z},
a Calabi--Yau $3$-fold $Z_{\scS^* (1) \oplus \calo(2)  } \subset G(2,5)$ in \cref{row4} is 
isomorphic to $\Sigma_{1^5,2}$,
that is, the complete intersection of $\Sigma $
by general five hyperplanes and a general quadric.
%\gray{
%In fact,
%$\Sigma_{1^5,2}$ is isomorphic to $OG(5,10)_{1^6,2}$, i.e.\  the complete intersection of $OG(5,10) $
%by \emph{general six} hyperplanes and a general quadric.
%To see this,
%}

We note that the dual variety $OG(5,10)^* \subset {\P^{15}}^{\vee}$
is isomorphic to $OG(5,10)$ itself,
and each point in $OG(5,10)^*$ corresponds to a Schubert divisor of $OG(5,10)$.
In particular, 
codimension of $OG(5,10)^* $ in ${\P^{15}}^{\vee}$ is five.

By definition, $OG(5,10)_{1^6} = OG(5,10) \cap H_0 \cap \dots\cap H_5 $ for general hyperplanes $H_i \subset \P^{15}$.
Then the linear space $\P^5=\langle H_0 \dots, H_5 \rangle \subset {\P^{15}}^{\vee}$ spanned by the six points $H_0 \dots, H_5  \in  {\P^{15}}^{\vee}$ intersects $OG(5,10)^* $.
Take $H'_0 \in \langle H_0 \dots, H_5 \rangle \cap OG(5,10)^*$ and general $H'_1,\dots, H'_5 \in \langle H_0 \dots, H_5 \rangle$
so that $\langle H_0 \dots, H_5 \rangle = \langle H'_0 \dots, H'_5 \rangle$.
Then we have 
\begin{align*}
OG(5,10)_{1^6} &= OG(5,10) \cap H_0 \cap \dots\cap H_5 \\
&= OG(5,10) \cap H'_0 \cap \dots\cap H'_5  = (OG(5,10) \cap H'_0) \cap H'_1 \cap\dots\cap H'_5.
\end{align*}
Since $OG(5,10) \cap H'_0$ is a Schubert divisor by $H'_0 \in  OG(5,10)^*$,
we see that $OG(5,10)_{1^6}$ is a linear section of a Schubert divisor by general five hyperplanes,
i.e.\ $OG(5,10)_{1^6} = \Sigma_{1^5}$.
Hence 
we have $OG(5,10)_{1^6,2} = \Sigma_{1^5,2}$.
\end{proof}

\begin{prop}

	The zero locus of a general section of $\scS^*(1)\oplus\scO(1)^{\oplus 3}$
	on $G(2,6)$ is isomorphic to a complete intersection of general nine hyperplanes
	in the $12$-dimensional Schubert variety $\Sigma \subset E_6/P_5=\mathbb{O}\P^2$ of degree 33.
\end{prop}

\begin{proof}
In the case of $n=6$, $\Sigma$ is
the $12$-dimensional Schubert variety
of degree $33$ in the Cayley plane $\mathbb{O}\P^2$ by \cref{rmk_galkin},
which is first pointed out by \cite{galkin, galkin2}.
By \cref{prop_Z=Z},
$ Z_{\scS^*(1)} \subset G(2,6)$ is projectively equivalent to $\Sigma_{1^6} $.
Hence $ Z_{\scS^*(1)\oplus\scO(1)^{\oplus 3}} \subset G(2,6) $ is projectively equivalent to $\Sigma_{1^9} $.
%Hence a Calabi--Yau $3$-fold in \cref{row7} is isomorphic to the 
%complete intersection of the $12$-dimensional Schubert variety
%of degree $33$ in $\mathbb{O}\P^2 $
%by nine hyperplanes,
%\gray{The geometry of the Schubert variety in $\mathbb{O}\P^2$ 
%and the complete intersection Calabi--Yau $3$-fold
%is which is discussed in detail by \cite{MR3688804}.}
%We note that they give an explicit defining equations,
%\begin{align}\label{eq_galkin}
%\Sigma = \left\{  [w, p] \in \P(W^* \oplus \wedge^{2} W) \, | \, p \wedge p=0,  \iota_w (p)= 0 \right\},
%\end{align}
%where $\iota_w :   \wedge^{2} W \rightarrow W$ is the interior product.
\end{proof}

The geometry of the Schubert variety $\Sigma \subset \mathbb{O}\P^2$ 
and the %complete intersection 
Calabi--Yau $3$-fold $\Sigma_{1^9} $
are discussed in detail by \cite{MR3688804}.

\end{rmk}

%%%%%%%%%%%%%%%%%%%%%%%%
%%%%%%%%%%%%%%%%%%%%%%%%%%%

%\subsection{No.\,\ref{row5}}
\subsection{\texorpdfstring{\cref{row5}}{}}

We apply \cref{prop_Z=Z} to $\cale=\wedge^2 \scQ $ on $G(2,W)$ for $\dim W=5$.
Then we have a subvariety $\Sigma $ in $\P(H^0(\cale) \oplus \wedge^2 W )  = \P(\wedge^2 W \oplus \wedge^2 W)=\P^{19}$.

\begin{lem}\label{lem_def_of_sigma}
It holds that
\[
\Sigma = \left\{ [q,p] \, | \, p \wedge p = q \wedge p=0 \in \wedge^4 W \right\}_{red} \subset \P(\wedge^2 W \oplus \wedge^2 W),
\]
where $X_{red}$ is the scheme with the reduced structure for a scheme $X$.
\end{lem}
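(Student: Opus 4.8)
The plan is to compute $\Sigma$ fibre by fibre over $G(2,W)$ and then match the result against the zero scheme of the displayed equations, passing to the reduction only at the very end. Since $\dim W=5$, the paper has already identified $H^0(\cale)=\wedge^2 W$, so the ambient space is $\P(\wedge^2 W\oplus\wedge^2 W)$, and $\calk$ is the kernel of the natural surjection $\wedge^2 W\otimes\calo\arw\wedge^2\scQ$. For a point $x\in G(2,W)$ with underlying $2$-plane $S_x\subset W$, the fibre $\calk_x$ is the kernel of $\wedge^2 W\arw\wedge^2(W/S_x)$, i.e.\ the $7$-dimensional subspace $S_x\wedge W$; and the tautological line $\calo(-1)_x\subset\wedge^k W=\wedge^2 W$ is the Pl\"ucker line $\C\,(v_1\wedge v_2)$ with $S_x=\langle v_1,v_2\rangle$, spanned by a decomposable vector (note $\C\,(v_1\wedge v_2)\subset S_x\wedge W$).

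First I would make $\Sigma$ explicit as a set. By construction $\mu$ sends a line $\ell\subset(\calk\oplus\calo(-1))_x=(S_x\wedge W)\oplus\C\,(v_1\wedge v_2)$ to the same line regarded inside $\wedge^2 W\oplus\wedge^2 W$, the first summand landing in the first factor and the second summand in the second. As $\P$ is complete, $\Sigma=\mu(\P)$ is a closed subvariety whose underlying set is $\bigcup_{x\in G(2,W)}\P\!\big((S_x\wedge W)\oplus\C\,(v_1\wedge v_2)\big)$; equivalently, $[q,p]\in\Sigma$ if and only if there is a point $x=[v_1\wedge v_2]\in G(2,W)$ with $q\in S_x\wedge W$ and $p\in\C\,(v_1\wedge v_2)$.

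The comparison then hinges on the elementary fact that for $q\in\wedge^2 W$ and linearly independent $v_1,v_2\in W$ one has $q\wedge v_1\wedge v_2=0$ in $\wedge^4 W$ precisely when $q\in\langle v_1,v_2\rangle\wedge W$ (extend $v_1,v_2$ to a basis and note that this condition annihilates exactly the coefficients of $q$ with both indices $\geq 3$). Granting this, the inclusion $\Sigma\subseteq\{\,p\wedge p=q\wedge p=0\,\}$ is immediate: a point of $\Sigma$ has $p=t\,v_1\wedge v_2$, so $p\wedge p=0$ and $q\wedge p=t\,(q\wedge v_1\wedge v_2)=0$ because $q\in S_x\wedge W$ (and trivially if $p=0$). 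For the reverse inclusion, if $p\neq 0$ then $p\wedge p=0$ forces $p$ decomposable, say $p=v_1\wedge v_2$, and $q\wedge p=0$ gives $q\in\langle v_1,v_2\rangle\wedge W$, hence $[q,p]\in\Sigma$; if $p=0$, one only needs that every $q\in\wedge^2 W$ lies in $S\wedge W$ for some $2$-plane $S$, which holds since $\dim W=5$ forces $q$ to have rank $0$, $2$ or $4$: the zero vector lies in every $S\wedge W$, a rank-$2$ vector $q=v_1\wedge v_2$ lies in $\langle v_1,v_2\rangle\wedge W$, and a rank-$4$ vector $q=v_1\wedge v_2+v_3\wedge v_4$ (with $v_1,\dots,v_4$ independent) lies in $\langle v_1,v_3\rangle\wedge W$. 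Thus $\Sigma$ and the displayed locus have the same underlying set; since $\Sigma$ is reduced (being the image of the irreducible variety $\P$) and the right-hand side carries its reduced structure by definition, the two schemes coincide.

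The one genuinely delicate point, and the reason the statement is phrased with $(-)_{\mathrm{red}}$, is the case $p=0$: one must know that $\Sigma$ contains the entire linear subspace $\P(\wedge^2 W\oplus 0)$, i.e.\ that $\bigcup_x S_x\wedge W=\wedge^2 W$ — this is exactly where $\dim W=5$ is used, and along this subspace the naive zero scheme $\{\,p\wedge p=q\wedge p=0\,\}$ fails to be reduced (its tangent space there is too big). Everything else is routine bookkeeping with the projective bundle $\P$ together with the one-line linear-algebra lemma above.
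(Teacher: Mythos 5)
Your proof is correct and follows essentially the same route as the paper: identify $\Sigma$ set-theoretically as the union over $x\in G(2,W)$ of $\P(\calk_x\oplus\C p)$, use the equivalence $q\in\calk_x\Leftrightarrow q\wedge p=0$ to match the equations away from $\P(\wedge^2W\oplus\{0\})$, and then check separately that all of $\P(\wedge^2W\oplus\{0\})$ lies in $\Sigma$. The only divergence is in that last step: the paper argues that $G(2,W)\cap\P(\ker(q\wedge\,\cdot\,))\neq\emptyset$ by a dimension count in $\P^9$ (a $6$-fold must meet a linear $\P^{\ge 4}$), whereas you exhibit a $2$-plane $S$ with $q\in S\wedge W$ explicitly via the rank classification of $2$-forms in dimension $5$; both uses of $\dim W=5$ are valid, yours being slightly more constructive and the paper's slightly shorter.
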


\begin{proof}
In this case,
$\scK$ is the kernel of $\wedge^2 W \otimes \calo \rightarrow \wedge^2 \scQ$.
Take a point  $x \in G(2,W) \subset \P(\wedge^{2} W)$ 
and let $p \in  \wedge^{2} W$ be a corresponding element.
Then the fiber $\pi^{-1}(x) $ is $\P( \calk_{x} \oplus \C p)$.
For $q \in \wedge^2 W$,
$q$ is contained in $\calk_{x} \subset \wedge^2 W$ if and only if $q \wedge p= 0$.
Hence it holds that
%\begin{align*}
%\Sigma &= \left\{  [q, tp] \in \P\left(\wedge^2 W \oplus \wedge^{2} W\right) \, \big| \, [p] \in G(k,W) , t \in C,  q \wedge p= 0 \right\} \\
%&=  \left\{  [q, p] \in \P\left(\wedge^2 W \oplus \wedge^{2} W\right) \, \big| \, [p] \in G(k,W) , t \in C,  q \wedge p= 0 \right\}
%\end{align*}
\[
\Sigma = \left\{  [q, tp] \in \P\left(\wedge^2 W \oplus \wedge^{2} W\right) \, \big| \, [p] \in G(2,W) , t \in \C,  q \wedge p= 0 \right\}.
\]
In particular,
$\Sigma$ is defined by $p \wedge p = q \wedge p =0$ scheme-theoretically outside $ \P\left(\wedge^2 W \oplus \{0\} \right)$.

On the other hand,
$[q, 0] \in  \P\left(\wedge^2 W \oplus \{0\} \right)$ is contained in $\Sigma$
if and only if there exists $[p] \in G(2,W)$ such that $q \wedge p=0$.
This condition is equivalent to $G(2,W) \cap \P(\ker (q \wedge)) \neq \emptyset$,
where $\ker (q \wedge) $ is the kernel of the linear map $q \wedge : \wedge^2 W \arw \wedge^4 W$.
Since $\dim G(2,W) =6$ and 
\[
\dim \ker (q \wedge) \geq \dim \wedge^2 W - \dim  \wedge^4 W =5,
\]
$G(2,W) \cap \P(\ker (q \wedge)) \subset \P(\wedge^2 W )=\P^9$ is non-empty.
Hence $  \P\left(\wedge^2 W \oplus \{0\} \right)$ is contained in $\Sigma$ and we have
\[
\Sigma = \left\{  [q, p] \in \P\left(\wedge^2 W \oplus \wedge^{2} W\right) \, \big| \, p \wedge p=  q \wedge p= 0 \right\} 
\]
as subsets in $\P\left(\wedge^2 W \oplus \wedge^{2} W\right)$.
\end{proof}

For $t \in \C \setminus 0$,
we set 
\begin{align}\label{eq_sigma_t}
\Sigma_t = \left\{  [q, p] \in \P\left(\wedge^2 W \oplus \wedge^{2} W\right) \, \big| \, p \wedge p=  t q  \wedge q + 2 q  \wedge p= 0 \right\}.
\end{align}

\begin{lem}\label{lem_degeneration}
For $t \in \C \setminus 0$,
$\Sigma_t$ is projectively equivalent to the join of two Grassmannians
\begin{align}\label{eq_two_G(2,5)}
G(2,W) \subset \P\left(\wedge^2 W \oplus \{0\} \right), \quad G(2,W) \subset \P\left(  \{0\} \oplus \wedge^2 W\right).
\end{align}
\end{lem}

\begin{proof}
Since 
\[
(tq +p) \wedge (tq +p) = t^2 q  \wedge q + 2t q  \wedge p + p  \wedge p = t ( t q  \wedge q + 2 q  \wedge p) + p \wedge p ,
\]
we have
\begin{align}\label{eq_join}
\left\{  [q, p]  \, \big| \, p \wedge p=  t q  \wedge q + 2 q  \wedge p= 0 \right\}  = \left\{  [q, p]  \, \big| \,  p \wedge p=(tq +p) \wedge (tq +p)= 0 \right\} 
\end{align}
in $ \P\left(\wedge^2 W \oplus \wedge^{2} W\right)$.
The right hand side of (\ref{eq_join}) is projectively equivalent to
\[
\left\{  [q, p]  \, \big| \,  p \wedge p=q \wedge q = 0 \right\}  \subset  \P\left(\wedge^2 W \oplus \wedge^{2} W\right),
\]
which is nothing but the join of the two $G(2,W)$'s in the statement of this lemma.
\end{proof}

Define a closed subscheme $\boldsymbol \Sigma' $ of $\P\left(\wedge^2 W \oplus \wedge^{2} W\right) \times \C$ by
\[
\boldsymbol \Sigma'   = \left\{  ([q, p] ,t) \,  \big| \, p \wedge p=  t q  \wedge q + 2 q  \wedge p= 0 \right\}
\]
and set $\boldsymbol \Sigma =\boldsymbol \Sigma'_{red}$.
By \cref{lem_degeneration}, $\boldsymbol \Sigma $ and $\boldsymbol \Sigma'$ coincide at least over $\C \setminus 0$.
Hence $\Sigma_t $ defined in \cref{eq_sigma_t} for $t \in \C \setminus 0$ is nothing but the fiber of $\boldsymbol \Sigma \rightarrow \C$ over $t \in \C$.

Let $\Sigma_0  \subset \P\left(\wedge^2 W \oplus \wedge^{2} W\right)$ be the fiber of $\boldsymbol \Sigma \rightarrow \C$ over $0 \in \C$.
In this notation,
 \cref{lem_def_of_sigma} states $(\Sigma_0)_{red}=\Sigma$.
In fact, we can show that $\Sigma_0$ is reduced:

\begin{lem}\label{lem_degeneration2}
Under the above notation, $\Sigma_0$ is reduced and
the family $\{\Sigma_t\}_{t \in \C}$ is flat.
In particular,
$\Sigma$ is a flat degeneration of the joins of two Grassmannians.
\end{lem}

\begin{proof}
For $t \neq 0$, $\Sigma_t $ is irreducible of dimension $13$ since it is the join of two Grassmannians $G(2,W)=G(2,5)$.
On the other hand,
$(\Sigma_0)_{red}=\Sigma$ is also irreducible of dimension $13$ by the definition of $\Sigma$ in \cref{eq_def_sigma}.
Since all fibers of the proper morphism $\boldsymbol \Sigma \rightarrow \C$ are irreducible of constant dimension,
the total space $\boldsymbol \Sigma $ is irreducible as well.

Hence, to see the first statement, it suffices to show that $(\Sigma_t)_{red}$ is
normal for any $t \in \C$ by \cite[Section 3, Theorem 9.11]{hartshorne}.
For $t \neq 0$,
the normality of $(\Sigma_t)_{red} = \Sigma_t$ follows from that of $G(2,W)$ and \cref{lem_degeneration}.
For the normality of $(\Sigma_0)_{red}=\Sigma $,
we show the following claim.
\begin{claim} The natural map 
\begin{align}\label{eq_surjectivity}
\Sym^k (\wedge^2 W \oplus \wedge^{2} W)^*  = \Sym^k H^0(G(2,W), \calk^* \oplus \calo(1)) \arw H^0(G(2,W), \Sym^k(\calk^* \oplus \calo(1)))
\end{align}
is surjective for any $k \geq 0$.
\end{claim}
\begin{proof}
It is enough to show the surjectivity of 
\begin{align}\label{eq_tensor}
\Sym^i   H^0(\calk^* ) \otimes \Sym^{j} H^0(\calo(1))  \arw H^0( \Sym^i (\calk^*) (j))
\end{align}
for any $i,j \geq 0$.
The exact sequence $0 \rightarrow \mathcal{Q}(-1) \rightarrow \wedge^2 W^{\ast} \otimes \mathcal{O} \rightarrow \mathcal{K}^{\ast} \rightarrow 0$ induces
  \begin{equation}
\begin{split}\label{eq_reslution}
    0 \rightarrow \mathrm{Sym}^{i-3}(\wedge^2 W^{\ast}) \otimes \mathcal{O}(j-2)  \rightarrow \mathrm{Sym}^{i-2}(\wedge^2 W^{\ast}) \otimes \wedge^2 \mathcal{Q}(j-2) \hspace{30mm}\\
    \rightarrow \mathrm{Sym}^{i-1}(\wedge^2 W^{\ast}) \otimes \mathcal{Q}(j-1)   \rightarrow \mathrm{Sym}^i (\wedge^2 W^{\ast}) \otimes \mathcal{O}(j) \rightarrow  \mathrm{Sym}^i (\mathcal{K}^{\ast})(j) \rightarrow 0.
  \end{split}
    \end{equation}
By the Bott--Borel--Weil theorem and spectral sequence,
it holds that the natural map 
\begin{align}\label{eq_tensor2}
H^0(\mathrm{Sym}^i (\wedge^2 W^{\ast}) \otimes \mathcal{O}(j)) \rightarrow H^0(\mathrm{Sym}^i (\mathcal{K}^{\ast} ) (j))
\end{align}
is surjective.
Since $H^0(\calk^* ) = \wedge^2 W^{\ast}$ and \cref{eq_tensor} factors through \cref{eq_tensor2},
\cref{eq_tensor} is surjective as well.
\end{proof}
By this claim, $\Sigma \subset  \P\left(\wedge^2 W \oplus \wedge^{2} W\right)$ is projectively normal,
and hence normal.
Hence the first statement follows from \cite[Section 3, Theorem 9.11]{hartshorne}.

The last statement follows from \cref{lem_degeneration} and $\Sigma_0=(\Sigma_0)_{red}=\Sigma$.
\end{proof}

The Calabi--Yau $3$-fold that is an intersection of 
two Grassmannians $G(2,5)$ with general positions in $\P^9$ is 
discussed in \cite{kanazawa,miura2,kapustka2}.
By Lemma \ref{lem_degeneration2},
we get the following proposition concerning such Calabi--Yau $3$-folds.

\begin{prop}\label{prop_No4}
Let $Z \subset G(2,W)=G(2,5)$ be the zero locus of a general section of $\wedge^2 \scQ (1)$.
Then $Z$ is a flat degeneration of Calabi--Yau $3$-folds of type $G(2,W) \cap G(2,W) \subset  \P\left(\wedge^2 W \right)$.
\end{prop}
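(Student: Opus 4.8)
The plan is to chain \cref{prop_Z=Z}, \cref{lem_def_of_sigma} and \cref{lem_degeneration} together and then to cut the flat family $\{(\Sigma_t)_{\mathrm{red}}\}_{t\in\C}$ by a general linear subspace. First I would apply \cref{prop_Z=Z} to $\cale=\wedge^2\scQ$ on $G(2,W)$ with $\dim W=5$: here $H^0(\cale)=\wedge^2 W$, the ambient space is $\P(\wedge^2 W\oplus\wedge^2 W)=\P^{19}$ as in \cref{lem_def_of_sigma}, and $\Sigma=(\Sigma_0)_{\mathrm{red}}$. So $Z$ is projectively equivalent to the linear section $\Sigma\cap P_{\bar s}$ by the codimension-$10$ subspace $P_{\bar s}$; and because $\P(H^0(\cale))=\P(\wedge^2 W)$ has dimension $9$ with $9+9<19$, a general $P_{\bar s}$ is just a general $\P^9\subset\P^{19}$. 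Fix such a general $P=\P^9$ and let $\mathcal X\subset\P^{19}\times\C$ be the total space of the flat family of \cref{lem_degeneration}, so $\mathcal X_0=\Sigma$ while for $t\ne0$ the fibre $\mathcal X_t$ is the join $J$ of two copies of $G(2,W)$ spanning complementary $\P^9$'s.

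Next I would identify the fibres of $\mathcal X\cap(P\times\C)\arw\C$. A general $P=\P^9\subset\P^{19}$ is the graph $\P(\Gamma_g)$ of a general isomorphism $g\colon\wedge^2 W\arw\wedge^2 W$ (graphs of isomorphisms form a dense open of $G(10,20)$, both of dimension $100$), and the elementary fact that $[(v,g(v))]$ lies on $J$ exactly when $[v]\in G(2,W)$ and $[g(v)]\in G(2,W)$ identifies $J\cap P$, via $[(v,g(v))]\mapsto[v]$, with $G(2,W)\cap g^{-1}(G(2,W))\subset\P(\wedge^2 W)$, i.e.\ with the intersection of two general translates of $G(2,W)$ in $\P^9$ --- a Calabi--Yau $3$-fold of the type in the statement. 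On the other hand $\mathcal X_0\cap P=\Sigma\cap P$ is, scheme-theoretically, projectively equivalent to $Z$ by \cref{prop_Z=Z}. Thus the fibres of $\mathcal X\cap(P\times\C)\arw\C$ are $Z$ over $0$ and the desired Calabi--Yau $3$-folds elsewhere.

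The remaining, and I expect the decisive, point is that $\mathcal Y:=\mathcal X\cap(P\times\C)\arw\C$ is \emph{flat}; granting that, the previous paragraph finishes the proof. Since $\C$ is reduced, flatness follows once the Hilbert polynomial of $\mathcal Y_t$ is shown independent of $t$. As $\{(\Sigma_t)_{\mathrm{red}}\}$ is flat its fibres share one Hilbert polynomial $Q$; taking $P$ sufficiently general and using that $\Sigma$ and $J$ are integral (indeed arithmetically Cohen--Macaulay), a general codimension-$10$ linear section of either has the Hilbert polynomial obtained from $Q$ by ten iterated first differences --- the same polynomial for both --- and this is also the Hilbert polynomial of $\mathcal Y_0=Z$ once $P$ is general with respect to $\Sigma$ as well (a condition compatible with the genericity already imposed so that $Z$ is smooth). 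Hence the Hilbert polynomial is constant, $\mathcal Y\arw\C$ is flat, and $\mathcal Y$ exhibits $Z$ as a flat degeneration of Calabi--Yau $3$-folds of type $G(2,W)\cap G(2,W)\subset\P(\wedge^2 W)$. The hard part is exactly this uniform control of the linear section across the whole base --- ensuring $\Sigma\cap P$ is reduced of the expected dimension with the expected Hilbert polynomial --- which is where one leans on $Z$ being a smooth Calabi--Yau $3$-fold and on the Cohen--Macaulayness of $\Sigma$; the structure of $\Sigma$ itself and the degeneration $\{\Sigma_t\}$ are already furnished by \cref{lem_def_of_sigma,lem_degeneration}.
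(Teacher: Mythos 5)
Your argument follows the paper's proof essentially verbatim: apply \cref{prop_Z=Z} to $\cale=\wedge^2\scQ$, cut the flat family $\{(\Sigma_t)_{\mathrm{red}}\}$ of \cref{lem_degeneration} by the fixed general $P_{\bar s}$, and identify the nearby fibres $\Sigma_t\cap P_{\bar s}$ with intersections of two general translates of $G(2,5)$ in $\P(\wedge^2 W)$. The only ingredient the paper makes explicit that you elide is the surjectivity of $H^0(\wedge^2\scQ)\otimes H^0(\calo(1))\to H^0(\wedge^2\scQ(1))$, needed so that a general section $s$ of $\wedge^2\scQ(1)$ really is the image of a general lift $\bar s$ (the hypothesis of \cref{prop_Z=Z}); your Hilbert-polynomial justification of flatness of the cut family is a reasonable expansion of what the paper asserts in one line.
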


\begin{proof}
Let $s$ be a general section of $\wedge^2 \scQ (1)$ on $G(2,W)$.
Since $H^0(\wedge^2 \scQ) \otimes H^0(\calo(1)) \arw H^0(\wedge^2 \scQ (1))$ is surjective,
we can take a lift $\bar{s} \in H^0(\wedge^2 \scQ) \otimes H^0(\calo(1)) $ of $s$.
Hence we can apply \cref{prop_Z=Z},
and $Z$ is isomorphic to the linear section of $\Sigma$ by $P_{\bar{s}}$.
Since $\Sigma_t $ degenerates to $\Sigma_0=\Sigma$,
$\Sigma_t \cap P_{\bar{s}} $ degenerates to $\Sigma \cap P_{\bar{s}} \simeq Z$.

Let $J \subset \P\left(\wedge^2 W \oplus \wedge^{2} W\right)$ be the join of two Grassmannians in (\ref{eq_two_G(2,5)}).
If a linear subspace $P \subset \P\left(\wedge^2 W \oplus \wedge^{2} W\right)$ of dimension $9= \dim \wedge^{2} W -1$ is general,
$ J \cap P$ is a variety of type $G(2,W) \cap G(2,W) \subset  \P\left(\wedge^2 W \right)$.
More precisely,
$J \cap P= f^{-1} ( G(2,W)) \cap g^{-1}(G(2,W)) \subset P$ holds,
where $ f $ and $g : P \stackrel{\sim}{\rightarrow} \P(\wedge^2 W)$ are isomorphisms obtained as the restrictions of
$ \P\left(\wedge^2 W \oplus \wedge^{2} W\right) \dashrightarrow \P(\wedge^2 W) : [q,p] \mapsto [q]$ and $[q,p]  \mapsto [p]$ respectively.

By Lemma \ref{lem_degeneration}, $\Sigma_t$ is projectively equivalent to the join of two Grassmannians
in $\P\left(\wedge^2 W \oplus \wedge^{2} W\right) $ for $t \neq 0$.
Since $\P(\wedge^2 W) \simeq P_{\bar{s}} $ is general in $\P\left(\wedge^2 W \oplus \wedge^{2} W\right)$,
each $\Sigma_t \cap P_{\bar{s}} $ is a variety of type $G(2,W) \cap G(2,W) \subset  \P\left(\wedge^2 W \right)$ if $t$ is general,
and we obtain this proposition.
\end{proof}

%\subsection{No.\,\ref{row10}}
\subsection{\texorpdfstring{\cref{row10}}{}}
\label{ss:9}

Manivel \cite[Theorem 3.1]{Ma} proved that the zero locus of a general section of $\scQ(1)$ on $G(2,n)$
is projectively equivalent to a general linear section of $G(2,n+1)$ of codimension $n$.
We can regard \cref{prop_Z=Z} as a generalization of \cite[Theorem 3.1]{Ma} as follows.

Consider the diagram
\begin{align}\label{eq:diagram}
\xymatrix{
	&  G_{G(2,W)} \left(2, \scS \oplus \calo_{G(2,W)} \right) \ar[ld]_{\mu} \ar[rd]^{\pi} &  \\
	G(2, W \oplus \C)&	 & G(2,W) ,\\
}
\end{align}
where $\pi$ is the Grassmannian bundle and $\mu$ is the morphism induced
by $\scS \oplus \calo_{G(2,W)}  \subset  (W \oplus \C) \otimes \calo_{G(2,W)} $.
%\gray{We note that 
%$\mu$ is in fact the blowup along $\P(W) \subset G(2, W \oplus \C)$.}
Since 
\begin{align*}
G_{G(2,W)} \left(2, \scS \oplus \calo_{G(2,W)} \right)  &= \P(\scS^* \oplus \calo_{G(2,W)} )  \\
&\simeq \P(\scS(1) \oplus \calo_{G(2,W)} ) \simeq  \P(\scS \oplus \calo_{G(2,W)}(-1 )) ,
\end{align*}
this diagram is nothing but the diagram \cref{diagram_pi_mu} for $\cale= \scQ$
and 
\[
\Sigma =G(2, W \oplus \C) \subset \P(\wedge^2(W \oplus \C)) = \P(W \oplus \wedge^2 W).
\]
Hence \cite[Theorem 3.1]{Ma} follows from \cref{prop_Z=Z}.

\vspace{2mm}
This can be also considered as a type-A analogue of \cref{prop_Tits} as follows.
Consider the diagram
\begin{equation}\label{diagram_An}
\begin{gathered}
\xymatrix{
  & %G/P_{2,n}=
  F(2,3; \widetilde{W})
  \ar[ld]_{\tilde\mu} \ar[rd]^{\tilde\pi} 
  &	& %G/P_{2,4}=
  F(1,3 ; \widetilde{W})
  \ar[ld] \ar[rd] &  \\
%G/P_n %=
  G(2,\widetilde{W})
  & & %G/P_2 =
G(3,\widetilde{W})
&  & G(1,\widetilde{W}),
%G/P_4 
\\
}
\end{gathered}
\end{equation}
where $\widetilde{W} =W \oplus \C$ and  $F(k,k' ; \widetilde{W}) $ is the flag variety parametrizing subspaces $V \subset V' \subset  \widetilde{W}$ with $\dim V=k, \dim V'=k'$.
Then the Tits transform of the point $[\{0\} \oplus \C]  \in G(1,\widetilde{W})$ by the right side is $G(2, W) \subset G(3,\widetilde{W})$,
and the Tits transform of $G(2, W) \subset G(3,\widetilde{W})$ by the left side induces the diagram \cref{eq:diagram},
which corresponds to the diagram \cref{diagram_tits_sub}.
%\cref{eq:diagram} can fit the
%type-A anologue of the diagrams \cref{diagram_E6}, \cref{diagram_tits_sub}.
%That is,
%$G(2,W)\subset G(3,W\oplus \C)$ is the Tits transform of a point in $G(1,\widetilde{W}) =\P(\widetilde{W})$
%as a Tits transform of a homogeneously
%embedded homogeneous submanifold $G(2,W)\subset G(3,W\oplus \C)$,

%%%%%%%%%%%%%%%
%%%%%%%%%%%%%%%%%%

%\section{Alternative description: No.\,\ref{row16} and \ref{row21}}\label{sec_description2}
\section{Alternative description: \texorpdfstring{\cref{row16,row21}}{}}\label{sec_description2}

%\subsection{\cref{row16,row21}} %, {(\cref{row10})}
In this section,
we show the following proposition,
which states that Calabi--Yau $3$-folds in \cref{row16} (resp.\  \cref{row21})
are deformation equivalent to general linear sections of $G(2,7)$ of codimension $7$
(resp.\ general linear sections of $G(3,6)$ of codimension $6$).
We note that Calabi--Yau $3$-folds in \cref{row21} are $Z_{\cals^*(1) \oplus \wedge^2 \cals^*}$ in $G(3,6)$,
which can be identified with $Z_{\cals^*(1) \oplus \wedge^2 \calq}$ by \cref{ss:natural}.

\begin{prop}\label{prop_specialize_of_(1)^n}
Let $n=\dim W$
and let $Z \subset G(k,W)$ be the zero locus of a general section of $\scS^* (1) \oplus \wedge^{n-k-1} \scQ$.
Then $Z$ is a flat degeneration of general complete intersections
$Z_{\scO(1)^{\oplus n}} \subset G(k,W)$.
\end{prop}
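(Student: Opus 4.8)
The plan is to exhibit $Z=Z_{\scS^{*}(1)\oplus\wedge^{n-k-1}\scQ}$ as the flat limit of the complete intersections $Z_{\scO(1)^{\oplus n}}$ along a one-parameter family of sections of the single bundle $\scO(1)^{\oplus n}$, the family being dictated by a natural exact sequence tying the three bundles together. Writing $l=n-k$, the first step is to dualize the tautological sequence $0\to\scS\to\scO^{\oplus n}\to\scQ\to0$ and twist by $\scO(1)$ to obtain
\[
0\to \scQ^{*}(1)\xrightarrow{\ i\ }\scO(1)^{\oplus n}\xrightarrow{\ q\ }\scS^{*}(1)\to0 .
\]
Since $\det\scQ=\scO(1)$ one has $\wedge^{l-1}\scQ\cong\scQ^{*}\otimes\det\scQ=\scQ^{*}(1)$, so the kernel sub-bundle is exactly $\wedge^{n-k-1}\scQ$. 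In particular $\scO(1)^{\oplus n}$ and $\scS^{*}(1)\oplus\wedge^{n-k-1}\scQ$ share the same rank $n$ and the same first Chern class $n$; both are globally generated, so by the Bertini-type theorem of \cite{mukai} their general zero loci are smooth of the expected dimension $k(n-k)-n$.

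Next I would build the degenerating family. By Bott's theorem $H^{1}(\wedge^{n-k-1}\scQ)=0$, so $H^{0}(q)$ is surjective; this lets me lift a general $a\in H^{0}(\scS^{*}(1))$ to some $\tilde a\in H^{0}(\scO(1)^{\oplus n})$ with $q(\tilde a)=a$, and I take a general $b\in H^{0}(\wedge^{n-k-1}\scQ)$, viewed through $i$ as a section $i(b)$ annihilated by $q$. For $t\in\mathbb{A}^{1}$ set $s_{t}:=t\,\tilde a+i(b)\in H^{0}(\scO(1)^{\oplus n})$, let $\mathcal Z^{\circ}:=\{(x,t)\mid t\neq0,\ s_{t}(x)=0\}\subset G(k,W)\times(\mathbb{A}^{1}\setminus0)$, and let $\mathcal W:=\overline{\mathcal Z^{\circ}}\subset G(k,W)\times\mathbb{A}^{1}$ with projection $\rho\colon\mathcal W\to\mathbb{A}^{1}$. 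As $\tilde a$ ranges over its lifts and $b$ varies, $\tilde a+t^{-1}i(b)$ covers a general section of $\scO(1)^{\oplus n}$, so for $t\neq0$ the rescaled section $s_{t}$ cuts out a general complete intersection and $\rho^{-1}(t)\cong Z_{\scO(1)^{\oplus n}}$ is smooth of dimension $k(n-k)-n$. Because $\mathcal W$ is an irreducible variety dominating the smooth curve $\mathbb{A}^{1}$, the morphism $\rho$ is automatically flat by \cite[III, Proposition 9.7]{hartshorne}, so the central fibre $\rho^{-1}(0)$ is the \emph{flat limit} of these complete intersections.

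The remaining step is to identify this flat limit with $Z$. Applying $q$ to the relation $s_{t}=0$ gives $q(s_{t})=t\,a$, so every point of $\rho^{-1}(t)$ with $t\neq0$ lies in $Z_{a}:=\{a=0\}$; along $Z_{a}$ the value $s_{t}$ lands in $i(\wedge^{n-k-1}\scQ)$, and in a local splitting of the sequence the vanishing of its $\wedge^{n-k-1}\scQ$-component reads $b=-t(\cdots)$. Thus $\rho^{-1}(t)=Z_{a}\cap\{b+t(\cdots)=0\}$, and letting $t\to0$ the limiting equations become $a=0$ and $b=0$, so $\rho^{-1}(0)=Z_{a}\cap Z_{b}=Z$ set-theoretically. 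For general $a,b$ this intersection is transverse and the $t$-derivative of $b+t(\cdots)$ in the normal direction is nonzero, so the limit is reduced and equals $Z$ as a scheme, which completes the identification.

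I expect the last point to be the main obstacle. The naive special fibre $Z_{s_{0}}=Z_{i(b)}=Z_{b}$ has codimension only $n-k$, hence the wrong dimension; the content of the argument is that the flat limit replaces it by the correct $Z=Z_{a}\cap Z_{b}$, and this replacement is exactly what the computation of leading terms through the exact sequence provides. Verifying carefully that $\rho^{-1}(0)$ carries no embedded or non-reduced structure — i.e.\ that the transversality above holds for general $a,b$ and that $\mathcal W$ has no extra components over $t=0$ — is the delicate step, together with the surjectivity of $H^{0}(q)$ that is needed to realize a genuinely general section $a$ of $\scS^{*}(1)$.
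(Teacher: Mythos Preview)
Your approach is the paper's: the same exact sequence, the same one-parameter family $s_t=t\tilde a+i(b)$, and the same key observation that $q(s_t)=ta$ forces every $Z_{s_t}$ with $t\neq0$ to lie inside $X:=Z_a$. The difference is in how the central fibre is identified, and here the paper dissolves precisely the ``delicate step'' you flag.

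Your argument passes through a local splitting and then has to check that the closure $\mathcal W$ has no extra or non-reduced components over $t=0$. The paper avoids this entirely by the following global remark: since $q(\tilde a)|_X=a|_X=0$, the restriction $\tilde a|_X$ already lies in $H^0(X,\scQ^*(1)|_X)$ (no local splitting is needed). Hence on $X$ the \emph{whole} family, including $t=0$, is the family of zero loci of the linear pencil $b|_X+t\cdot(\tilde a|_X)$ of sections of the \emph{fixed} bundle $\scQ^*(1)|_X$. The member at $t=0$ is then manifestly $Z$, with no closure or limit computation required. Flatness is checked not by the ``irreducible over a curve'' criterion on a closure, but by noting that the Koszul complex of $(a,b)$ for $\scS^*(1)\oplus\scQ^*(1)$ and the Koszul complex of $s_t$ for $W^*(1)\simeq\scO(1)^{\oplus n}$ have the same graded pieces (via the exact sequence), so $Z$ and $Z_t$ share a Hilbert polynomial. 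This closes the gap you identified without the transversality and embedded-component verifications you were anticipating.
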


\begin{proof}
We note that $\wedge^{n-k-1} \scQ$ is isomorphic to $\scQ^* (1)$.
By the exact sequence
\begin{align}\label{eq_dual(1)}
0 \rightarrow \scQ^* (1) \arw W^* (1) \arw \scS^* (1) \arw 0,
\end{align}
we have an exact sequence of global sections
\[
0 \arw H^0( \scQ^* (1)) \arw H^0 (W^* (1))  \stackrel{\varpi}{\arw}  H^0 (\scS^* (1)) \arw 0.
\]

Choose and fix general $(s,q) \in H^0 (\scS^* (1)) \oplus H^0( \scQ^* (1))$.
Since $s$ is general, there exists a general section $\bar{s} \in H^0 (W^* (1)) $
such that $\varpi(\bar{s} ) =s$.
Let $X \subset G(k,W)$ be the zero locus of $s \in H^0 (\scS^* (1))$.
On $X$, we have an exact sequence
\[
0 \arw H^0(X, \scQ^* (1) |_{X}) \arw H^0 (X, W^* (1)|_{X})  \stackrel{\varpi}{\arw}  H^0 (X, \scS^* (1) |_{X}).
\]
Since $\varpi(\bar{s}  |_{X})= s |_{X}=0$,
$\bar{s}  |_{X} \in H^0 (X, W^* (1)|_{X}) $ is contained in $H^0(X, \scQ^* (1) |_{X})$.
By the exact sequence \cref{eq_dual(1)},
the zero locus of $\bar{s} \in  H^0 (W^* (1)) $ in $G(k,W)$ coincides with
the zero locus of $\bar{s}  |_{X} \in H^0(X, \scQ^* (1) |_{X})$ in $X$.

Let $Z_t \subset G(k,W)$ be the zero locus of $q + t \bar{s} \in H^0 (W^* (1))$ for $t \neq 0$.
Since $\varpi (q + t \bar{s}) = t s$, $Z_t$ is contained in $X$.
As a subscheme of $X$,
$Z_t$ is the zero locus of $q + t \bar{s} |_X \in H^0(X, \scQ^* (1) |_X)$.
Since $Z$ is the zero locus of $ q |_X \in H^0(X, \scQ^* (1) |_X)$ as a
subscheme of $X$,
$Z_t$ degenerates to $Z$.

The Koszul complex induced by $(s,q) \in H^0 (\scS^* (1)) \oplus H^0( \scQ^* (1))$ (resp.\ $ q + t \bar{s} \in H^0 (W^* (1))$) gives a locally free resolution of $\calo_{Z}$ (resp.\ $\calo_{Z_t}$) on $G(k,W)$.
Hence $Z$ and $Z_t$ have the same Hilbert polynomial by the exact
sequence \cref{eq_dual(1)}.
Thus this degeneration is flat.
\end{proof}

\begin{rmk}
  By \cref{prop_specialize_of_(1)^n}, we see
  the $G_2$-Grassmannian Calabi--Yau 3-fold $X$ in 
  \cite{G2_CY, annihilator} 
  is a specialization of linear section Calabi--Yau 3-folds 
  $Z_{\scO(1)^{\oplus 7}} \subset G(2,7)$.
\end{rmk}

%%%%%%%%%%%%%%%%%%%%%%%%%%%
%%%%%%%%%%%%%%%%%%%%%

\section{Alternative description: the rest cases}
\label{sec_description4}
In this section,
we see the rest of descriptions in \cref{tb:cy3} briefly.\\

\cref{row9} : As in the proof of \cite[Proposition 2.1]{Ku},
$Z_{\Sym^2 \scS^*} \subset G(2,6)$ can be identified with the flag variety $F(1,3;\C^4) \subset \P^3 \times \P^3$.
Under the identification,
$\calo(1)|_{Z_{\Sym^2 \scS^*}}$ and $\scS|_{Z_{\Sym^2 \scS^*}}$
correspond to 
\begin{align}
\calo(1,1) :=\calo_{\P^3 \times \P^3}(1,1) |_{F(1,3;\C^4)} \quad \text{and} \quad  (\scS_3/\scS_1) \otimes \scS_1
\end{align}
%$\calo_{F(1,3;\C^4)}(1,1) =\calo_{\P^3 \times \P^3}(1,1) |_{F(1,3;\C^4)}$ and
%$S|_{Z_{\Sym^2 S^*}}$ corresponds to $ (\scS_3/\scS_1) \otimes \scS_1$ 
on $F(1,3;\C^4) $ respectively,
where $\scS_1 \subset \scS_3 \subset \calo_{F(1,3;\C^4)}^{\oplus 4}$ are the universal subbundles of rank $1$ and $3$.
Hence $\scS^*(1)|_{Z_{\Sym^2 \scS^*}}$ corresponds to $ (\scS_3/\scS_1)^* \otimes \scS_1^* \otimes \calo(1,1) = (\scS_3/\scS_1)^*  \otimes \calo(2,1)  $.
Thus the Calabi--Yau $3$-fold $Z_{\Sym^2 \scS^* \oplus \scS^*(1)} \subset G(2,6)$ is
isomorphic to the zero locus of a section of $(\scS_3/\scS_1)^*  \otimes \calo(2,1) $ on $F(1,3;\C^4) $.\\

\cref{row11} : 
$Z_{\wedge^3 \scQ} \subset  G(2,6)$ is a $4$-dimensional Del Pezzo manifold with Picard number two.
By the classification of Del Pezzo manifolds 
(see \cite{IPbook}), it is isomorphic to 
$\P^2 \times \P^2$.\\

\cref{row15}
%, row16
: It is known that 
$Z_{\wedge^4 \scQ} \subset G(2,7)$ is isomorphic to a 
rational homogeneous space $G_2/P_1$ by
\cite{MR1201387},
(see \cite[Section 16]{CCGK}).\\
%\gray{The restriction $\scS^*(1)|_{G_2/P_1}$ is a
%homogeneous vector bundle on $G_2/P_1$ and 
%a Calabi--Yau 3-fold $Z_{ \scS^*(1) \oplus \wedge^4 \scQ}$ in \cref{row16} is isomorphic to 
%a $G_2$-Grassmannian Calabi--Yau 3-fold $X$ in \cite{G2_CY, annihilator}.
%}\\

\cref{row17,row15,row18} :  
In \cite{Tj},
Tj\o tta studied Calabi--Yau $3$-folds obtained as zero loci of sections of locally free sheaves $ \calo(1)^{\oplus 3}, \calf^* \oplus \calo(2)$, or $\Sym^2 \calf^*$ on the space of determinantal nets of conics $\mathbf{N} $,
where $\calf$ is a locally free sheaf on $\NN$ of rank $2$
(see \cite{EPS}, \cite{Tj} for the definition and properties of $\mathbf{N}$).
We see that these Calabi--Yau $3$-folds are isomorphic to Calabi--Yau $3$-folds in \cref{row17}, \ref{row15}, \ref{row18}, respectively, as follows:

%We see that $Z_{\wedge^5 \calq} \subset G(2,8)$ is isomorphic to $\mathbf{N}$ as follows:

Set $W=\mathfrak{sl}(3) =\mathfrak{sl}(V)$ for $V=\C^3$.
As in \cite[Section 4]{Ku},
we have an $SL(3)$-invariant $ 3$-form $\omega \in \wedge^3 W^*$ by $\omega(X,Y,Z) := \mathrm{Tr} ([X,Y]Z)$.

By the canonical isomorphism $\wedge^3 W^* \simeq \wedge^5 W=H^0(G(2,W),\wedge^5 \calq) $ up to scalar,
we regard $\omega$ as an element in $H^0(G(2,W),\wedge^5 \calq) $
and let $Z \subset G(2,W) $ be the zero locus of $\omega$.
Since $\omega \in \wedge^3 W^* $ is general under $GL(W)$ by \cite[Proposition 4.7]{Ku},
we have $Z=Z_{\wedge^5 \calq} \subset G(2,8)$.
On the other hand,
by the definition of $\omega$,
$Z$ is nothing but the variety of abelian $2$-dimensional subspaces of $W=\mathfrak{sl}(V)$.
In \cite[Section 4]{MR2164624}, it is proved that 
this variety is isomorphic to the space of determinantal nets of conics $\mathbf{N}$.
In fact,
we can check the following:

\begin{lem}\label{lem_F=S}
It holds that $ \iota^* \cals =\calf$ for an isomorphism $\iota : \NN \rightarrow Z_{\wedge^5 \calq} \subset G(2,8)$.
\end{lem}

\begin{proof}
By \cite[Section 3]{Tj}, $SL(V)$ acts on $\NN$, and $\calf $ is an $SL(V)$-equivalent subbundle of $W' \otimes \calo_{\NN}$,
where $W'$ is the kernel of the natural map $\Sym^2 V \otimes V \rightarrow \Sym^3 V$.
Since $W' \simeq \mathfrak{sl}(V) $ as $SL(V)$-modules,
we have an $SL(V)$-equivalent morphism $\iota: \NN \rightarrow G(2,W')=G(2,\mathfrak{sl}(V))$.
For a general point $p \in \NN$,
we have
\[
\calf_{p} =  \{ a yz \otimes x + b xz \otimes y + c xy \otimes z \, | \, a,b,c \in \C, a +b + c =0 \}\subset W'  \subset \Sym^2 V \otimes V 
\]
for a basis $x,y,z $ of $V$
by \cite[Table 1]{Tj}.
Hence $\iota(p) \in G(2,\mathfrak{sl}(V)) $ corresponds to the abelian subspace of
$\mathfrak{sl}(V)$ consists of the diagonal matrices with respect to the basis $x,y,z$.
Since $p \in \NN$ is a general point,
the image $\iota(\NN)$ is contained in $Z=Z_{\wedge^5 \calq} \subset G(2,\mathfrak{sl}(V) )$.

By construction,
$ \iota^* \cals =\calf$ holds.
Since $\iota^* \calo_Z(1) =\det  \iota^* \cals^* = \det \calf^*  $ is ample on $\NN$,
$\iota: \NN \rightarrow Z$ is a finite morphism.
In particular, $\iota(\NN)=Z$ since $\dim \NN=\dim Z=6$.
On the other hand,
the intersection number $(\det \calf^*)^6 $ is $57$ by \cite[Table 2]{Tj},
which coincides with $ {\calo_Z(1)}^6 $.
Thus $\iota$ is a finite morphism of degree one.
Since $Z$ is smooth and hence normal, $\iota : \NN \rightarrow Z$ is an isomorphism.
%There exists a birational morphism $\Hilb^3 \P(V^*) \rightarrow \NN$
%and the above point $p \in \NN$ corresponds to the set of general $3$ points in $\P(V^*)$,
%Hence $\iota$ is generically injective by the above description.
%Since both $\NN$ and $ Z_{\wedge^5 \calq}$ are smooth $6$-folds with Picard number one,
%$\iota$ is an isomorphism.
%By construction,
%$ \iota^* \cals =\calf$ holds.
\end{proof}

By this lemma,
Calabi--Yau $3$-folds obtained from $ \calo(1)^{\oplus 3}, \calf^* \oplus \calo(2), \Sym^2 \calf^*$ on $\NN$
are isomorphic to Calabi--Yau $3$-folds in \cref{row17}, \ref{row15}, \ref{row18}, respectively.
\\

\cref{row22} : We see that $Z_{\Sym^2 \scS^*} \subset G(3,7)$ is an
orthogonal Grassmannian $OG(3,7)\simeq OG(4,8)$.
By the triality of $SO(8)$, it is also isomorphic to $OG(1,8)$,
a quadric hypersurface $Q^6\subset \P^7$, 
which is regarded as the spinor
embedding of $OG(3,7)$. Since
$\cO_{G(3,7)}(1)|_{Z_{\Sym^2 \scS^*}}=\cO_{Q^6}(2)$ under the above
isomorphism,
$Z_{\Sym^2 \scS^*\oplus \scO(1)^{\oplus
3}} \subset G(3,7)$ is nothing but a complete intersection of 
four quadric hypersurfaces in $\P^7$.\\

\cref{row26} :
We recall a result by Reid in \cite{Re}.
\cref{row26} is the case of $k=3$ in the following.

Let $W$ be a vector space of dimension $2k+2$ and let $\C^2 \hookrightarrow \Sym^2 W^*$ be a general pencil of symmetric-forms.
Let $X \subset G(k,W)$ be the zero locus of the section of $(\Sym^2 \scS^*)^{\oplus 2}$ corresponding to this pencil.

Let $l \subset \P(\Sym^2 W^*)$ be the line corresponding to this pencil
and let $D \subset \P(\Sym^2 W^*)$ be the discriminant hypersurface corresponding to degenerate symmetric-forms.
Since the pencil is general,
the line $l  $ intersects with $D$ transversally
and $l \cap D$ consists of $2k+2$ points. 
Let $C \rightarrow l $ be the hyperelliptic curve ramified over $l \cap D$.
Reid proved the following theorem.

\begin{thm}[{\cite[Theorem 4.8]{Re}}]\label{thm_reid}
$X$ is isomorphic as a variety to the Jacobian $J(C)$.
\end{thm}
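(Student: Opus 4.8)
The idea is to reduce to the classical theory of pencils of quadrics. A point of $X$ is a $k$-dimensional subspace $U\subset W$ isotropic for the two generators of the pencil $\C^2\hookrightarrow\Sym^2 W^*$, hence for every member of $l$; projectivising, $X$ is the Fano variety of $(k-1)$-planes contained in $Y:=Q_0\cap Q_1\subset\P(W)=\P^{2k+1}$, the complete intersection of two general quadrics of the pencil. The genericity in the hypothesis (with the Bertini type theorem used above) makes $Y$ and $X$ smooth, and one has $\dim X=k=g(C)=\dim J(C)$. I would also realise $C$ intrinsically: for $\lambda\in l\setminus(l\cap D)$ the form $Q_\lambda$ is nondegenerate on the even-dimensional space $W$, so its maximal (that is, $(k+1)$-dimensional) isotropic subspaces fall into two ``rulings'', and the monodromy of the rulings around the $2k+2$ points of $l\cap D$ produces a double cover $C\to l$ branched exactly over $l\cap D$, which is the hyperelliptic curve of the statement.

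The heart of the argument is an Abel--Jacobi type morphism $\phi\colon X\to\mathrm{Pic}^{e}(C)$ for a suitable $e$. Given $\Lambda=\P(U)\in X$ and $\lambda$ with $Q_\lambda$ nondegenerate, the quadratic space $U^{\perp_\lambda}/U$ is two-dimensional with exactly two isotropic lines, so $U$ lies in a unique $(k+1)$-dimensional isotropic subspace of $Q_\lambda$ in each ruling; over a branch point $\lambda_0$ these two choices coalesce into $U+\mathrm{rad}\,Q_{\lambda_0}$, so the family fits together into a rank-$(k+1)$ subbundle $\mathcal{U}_\Lambda\subset W\otimes\calo_C$ containing $U\otimes\calo_C$. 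Then $\phi(\Lambda):=[\mathcal{U}_\Lambda/(U\otimes\calo_C)]\in\mathrm{Pic}(C)$ is the map; equivalently one exhibits a simply transitive action of $J(C)$ on $X$ by switching rulings along divisors of $C$. Recovering $U$ from $\mathcal{U}_\Lambda$ (e.g.\ as $\bigcap_{c\in C}\mathcal{U}_\Lambda|_c$ for general $c$) shows $\phi$ is injective on closed points; since $\phi$ is proper with $k$-dimensional irreducible target $\mathrm{Pic}^e(C)$, a dimension count makes it surjective, hence birational, hence — the target being smooth — an isomorphism by Zariski's Main Theorem, exactly as in the other identifications in this paper (in particular $X$ is irreducible). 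As $\mathrm{Pic}^e(C)$ is a torsor under $J(C)$, this yields $X\simeq J(C)$ as a variety.

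I expect the main obstacle to be the construction and analysis of $\phi$: proving that $\mathcal{U}_\Lambda$ extends holomorphically across the branch points $l\cap D$ and depends algebraically on $\Lambda$, identifying the component $\mathrm{Pic}^e(C)$ in which the image lies, and — for the torsor reformulation — verifying $J(C)$-equivariance. This is exactly the analysis carried out by Reid in \cite{Re}, which I would follow (alongside the parallel treatments of pencils of quadrics by Donagi and by Desale--Ramanan) rather than redo from scratch.
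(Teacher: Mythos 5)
This statement is not proved in the paper at all: it is quoted verbatim from Reid's thesis as \cite[Theorem 4.8]{Re}, and the surrounding text only sets up the notation ($X$, the pencil, the line $l$, the discriminant $D$, the hyperelliptic curve $C$) and then draws consequences such as $\rho(X)=1$ via $\End(J(C))\simeq\Z$. So there is no in-paper argument to compare yours against; the relevant comparison is with Reid's own proof. Your reduction to the classical picture is correct: $\P(U)$ for $[U]\in X$ is a $(k-1)$-plane in the base locus of the pencil, $\dim X=k(k+2)-2\binom{k+1}{2}=k=g(C)$, the two rulings of each nondegenerate $Q_\lambda$ give the double cover $C\to l$ branched over the $2k+2$ points of $l\cap D$, and the isomorphism is ultimately obtained from an Abel--Jacobi-type map (equivalently, a simply transitive $J(C)$-action on $X$). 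This is indeed the route taken by Reid and in the parallel treatments by Donagi and by Desale--Ramanan.

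One step in your sketch, as written, does not work: you justify injectivity of $\phi$ by saying that $U$ can be recovered from the subbundle $\mathcal{U}_\Lambda\subset W\otimes\calo_C$, but $\phi(\Lambda)$ remembers only the isomorphism class of the quotient line bundle $\mathcal{U}_\Lambda/(U\otimes\calo_C)$ in $\mathrm{Pic}^e(C)$, from which the subbundle $\mathcal{U}_\Lambda$ itself is not a priori recoverable; two distinct $\Lambda$ could conceivably yield isomorphic quotients. Establishing injectivity (or, in the torsor formulation, simple transitivity of the $J(C)$-action) is precisely the nontrivial content of Reid's theorem, and you explicitly defer to him there --- at which point your argument, like the paper's, reduces to the citation of \cite{Re}. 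That is acceptable given what the paper does, but you should be aware that the one step you gesture at proving is the one whose stated justification does not yet suffice.
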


We note that $\rho(X)=1$ for general $X$.
In fact,
$C$ is general in the space of hyperelliptic curve of genus $k$
since the pencil is general.
Hence the Jacobian $J(C)$ has Picard number one.\\
% (For a suitable basis $\{x_1,\ldots,z_{2k+2} \}$ of $W^*$,  the pencil is spanned by $\sum_{i=1}^{2k+2} x_i^2 $ and $\sum_{i=1}^{2k+2} a_i x_i^2$ for some general $a_i \in \C$.) 
%By \cite[{Theorem 6.5}]{Mo},
%the endomorphism ring $\End (J(C)) $ is isomorphic to $\Z$.
%Since the N\'eron-Severi group $NS(J(C)) $ is naturally embedded into $\Hom(J(C), \widehat{J(C)})$ for the dual abelian variety $ \widehat{J(C)}$ (cf.\ \cite{Mu})
%and $ \widehat{J(C)} \simeq J(C)$,
%we have $NS(J(C))  \simeq \Z$.\\

\cref{row29}: 
Let $\mu : Y \rightarrow \P^5$ be the blowup along the Veronese surface $\nu_2: \P^2 \hookrightarrow \P^5$
and let $ E \subset Y$ be the exceptional divisor.
Kuznetsov \cite[Theorem 4.10]{Ku} showed that $Z_{\wedge^3 \scQ} \subset G(3,8)$ is isomorphic to $Y$.
By this description,
we have the following proposition:

\begin{prop}
$ Z_{\wedge^3 \scQ \oplus \cO(1)^{\oplus 2}} $ in $ G(3,8)$ is isomorphic to a crepant resolution of a complete intersection $Z'$ of two cubic hypersurfaces containing $\nu_2(\P^2)$ in $\P^5$.
The exceptional locus of $ Z_{\wedge^3 \scQ \oplus \cO(1)^{\oplus 2}} \rightarrow Z'$ consists
of a disjoint union of  $30$ $\P^1$'s. 
\end{prop}

\begin{proof}
By \cite{Ku},
$\cO_{ G(3,8)} (1) |_{Z_{\wedge^3 \scQ}} $ corresponds to $\mu^* \cO_{\P^5}(3) \otimes \calo(-E)$ under $Z_{\wedge^3 \scQ} \simeq Y$.
Hence $ Z_{\wedge^3 \scQ \oplus \cO(1)^{\oplus 2}} \subset G(3,8)$ is isomorphic to the complete intersection $Z= D_1 \cap D_2 \subset Y$ of general two divisors $D_1,D_2 \in |\mu^* \cO_{\P^5}(3) \otimes \calo(-E) |$ in $Y$.
Set  $Z' =\mu(Z) \subset \P^5$.
Then $Z' = \mu(D_1) \cap \mu(D_2) $ is a complete intersection of two cubic hypersurfaces containing $\nu_2(\P^2)$.
Since $\mu|_Z :Z \rightarrow Z'$ is birational and $K_Z \sim 0, K_{Z'} \sim 0 $,
$Z$ is a crepant resolution.

The exceptional locus of the resolution $\mu|_Z$
is contained in $E \cap Z= E \cap D_1 \cap D_2$.
Since $\mu : Y \rightarrow \P^5$ is the blowup along $\nu_2(\P^2)$,
$E \rightarrow \nu_2(\P^2)$ is the $\P^2$-bundle $\P_{\nu_2(\P^2)}(N_{\nu_2(\P^2)/\P^5}) \rightarrow \nu_2(\P^2)$,
where $N_{\nu_2(\P^2)/\P^5}$ is the normal bundle of $\nu_2(\P^2) \subset \P^5$.

Take a section $s \in H^0( Y,\mu^* \cO(3) \otimes \calo(-E))$ and let $D \subset Y$ be the divisor defined by $s$.
The restriction of $s$ on $E$ induces a homomorphism $f_s :N_{\nu_2(\P^2)/\P^5}  \rightarrow  \cO_{\P^5}(3) |_{\nu_2(\P^2)}$ on $\nu_2(\P^2) $ by
\begin{align*}
 H^0(E,( \mu^* \cO(3) \otimes \calo(-E)) |_{E}) &\simeq H^0(\nu_2(\P^2) , N^{\vee}_{\nu_2(\P^2)/\P^5} \otimes  \cO_{\P^5}(3) |_{\nu_2(\P^2)}) \\
 & \simeq \Hom_{\nu_2(\P^2)}  (N_{\nu_2(\P^2)/\P^5} , \cO_{\P^5}(3)  |_{\nu_2(\P^2)}) .
\end{align*}
Then the fiber of $E \cap D=\P(N_{\nu_2(\P^2)/\P^5}) \cap D \rightarrow \nu_2(\P^2) $ over $x \in \nu_2(\P^2)$ is the projective subspace of $\P(N_{\nu_2(\P^2)/\P^5 } \otimes k(x))  \simeq \P^2$ corresponding to the kernel of $f_s$ at $x$.

Let $s_1,s_2$ be the section corresponding to $D_1,D_2$
and consider
%\begin{align}\label{eq_normal_bundle}
\[
(f_{s_1},f_{s_2} ): N_{\nu_2(\P^2)/\P^5}  \rightarrow  \cO_{\P^5}(3) |_{\nu_2(\P^2)}^{\oplus 2} .
\]
%\end{align}
Then the fiber of $E \cap D_1 \cap D_2\rightarrow \nu_2(\P^2) $ over $x \in \nu_2(\P^2)$ is the projective subspace corresponding to the kernel of $(f_{s_1},f_{s_2} )$ at $x$.
In particular,
the fiber over $x$ is a reduced point, $\P^1$, or $\P^2$ if $(f_{s_1},f_{s_2} )$ has rank $2,1$, or $0$ at $x$, respectively.
Since $D_1,D_2$ are general,
we can compute that the locus where $(f_{s_1},f_{s_2} )$ has rank $1$ (resp.\ $0$) consists of
$30$ points by the Porteous formula (resp.\ is empty).
Thus the exceptional locus of $\mu|_Z :Z \rightarrow Z'$ consists of $30$ $\P^1$'s.
\end{proof}

\cref{row30} : Similarly to \cref{row22},
$Z_{\Sym^2 \scS^*} \subset G(4,8)$ is a
disjoint union of two quadrics $Q^6\simeq OG(4,8)$ and 
the restriction of $\cO_{G(4,8)}(1)$ on each component $Q^6$ coincides with $\cO_{Q^6}(2)$.
%$\cO_{G(4,8)}(1)|_{Z_{\Sym^2 \scS^*}}=\cO_{Q^6 \sqcup Q^6}(2)$.
%the restriction $\scO(1)|_{Z_{\Sym^2 \scS^*}}$ is double of 
%the ample generator of the Picard group.
Hence $Z_{\Sym^2 \scS^*\oplus \scO(1)^{\oplus
3}} \subset G(4,8)$ is a disjoint union of two
$\left(\P^7\right)_{2^4}$.\\

\cref{row31,row33} : $Z_{(\wedge^2 \scS^*)^{\oplus 2}} \subset G(k,2k)$ is isomorphic to $\prod^k \P^1$
by \cite[Theorem 3.1]{Ku}.
\\

\cref{row32} : As in \cite[Example 4.1]{kuchle}, $Z_{\Sym^2 \scS^* \oplus \wedge^2 \scS^*} $ is of index at least two.
We can also compute $h^{1,1} \ge 4$ for $Z_{\Sym^2 \scS^* \oplus
\wedge^2 \scS^*} $.
Hence $Z_{\Sym^2 \scS^* \oplus \wedge^2 \scS^*} $ is $\P^1 \times \P^1 \times \P^1 \times \P^1$
by the classification of Fano $4$-folds of index two with Picard
number at least two (see \cite{IPbook}).\\

%]

\bibliographystyle{amsalpha}
%\bibliography{myreference}
\bibliography{manuscript}
%\printbibliography
\end{document}